\title{Smooth Backfitting for Additive Hazard Rates} 
\author[1]{Stephan M.~Bischofberger\footnote{Corresponding author: Stephan M.~Bischofberger, e-mail: bischofberger@staburo.de, address: Aschauer Str. 26a, 81549 Munich, Germany}}
\author[2]{Munir Hiabu}
\author[3]{Enno Mammen}
\author[4]{Jens Perch Nielsen}
\affil[1]{Staburo GmbH,  Munich, Germany}
\affil[2]{Department of Mathematical Sciences, University of Copenhagen, Denmark}
\affil[3]{Institute for Applied Mathematics, Heidelberg University, Germany}
\affil[4]{Bayes Business School, City St George’s, University of London, United Kingdom}
\let\originalleft\left
\let\originalright\right
\renewcommand{\left}{\mathopen{}\mathclose\bgroup\originalleft}
\renewcommand{\right}{\aftergroup\egroup\originalright}
\renewcommand{\algocf@captiontext}[2]{#1\algocf@typo. \AlCapFnt{}#2} % text of caption
\def\@algocf@capt@plain{top}
\renewcommand{\algocf@makecaption}[2]{%
  \addtolength{\hsize}{\algomargin}%
  \sbox\@tempboxa{\algocf@captiontext{#1}{#2}}%
  \ifdim\wd\@tempboxa >\hsize%     % if caption is longer than a line
    \hskip .5\algomargin%
    \parbox[t]{\hsize}{\algocf@captiontext{#1}{#2}}% then caption is not centered
  \else%
    \global\@minipagefalse%
    \hbox to\hsize{\box\@tempboxa}% else caption is centered
  \fi%
  \addtolength{\hsize}{-\algomargin}%
}
\DeclareMathOperator*{\argmin}{arg\,min} % by Stephan
\DeclareMathOperator{\trace}{trace} % by Stephan
\DeclareMathOperator{\Var}{Var} % by Stephan
\newtheorem{remark}{Remark}
\newtheorem{proposition}{Proposition}
\newtheorem{theorem}{Theorem}
\newtheorem{lemma}{Lemma}
\begin{document}

\maketitle

\begin{abstract}
Smooth backfitting was first introduced in an additive regression setting via a direct projection alternative to the classic backfitting method by Buja, Hastie and Tibshirani. 
This paper translates the original smooth backfitting concept to a survival model considering an additively structured hazard. The model allows for censoring and truncation patterns occurring in many  applications such as medical studies or actuarial reserving. Our estimators are shown to be a projection of the data into the space of multivariate hazard functions with smooth additive components. Hence, our hazard estimator is the closest nonparametric additive fit even if the actual hazard rate is not additive. This is different to other additive structure estimators where it is not clear what is being estimated if the model is not true.
We provide full asymptotic theory for our estimators.
We  propose an implementation of estimators that show good performance in practice. 
\end{abstract}
\textit{Keywords}: additive hazard model; local linear kernel estimation; smooth backfitting; survival analysis. \\

\section{Introduction}
This paper introduces a fundamental model and estimator for structured multivariate marker dependent hazards: the smooth backfitting of additive hazards. In structured non-parametric regression,   \cite{Mammen:etal:99} modelled and estimated the additive structure by projecting data onto the appropriate additive subspace. The resulting projection estimator is known as the smooth backfitting estimator. The name comes from the fact that when calculating the projection estimator iteratively, then one must not only  smooth the component that is being updated, but all components.  This is different to classical backfitting \citep{Buja:etal:89} where only the component that is  being updated is smoothed. It has been shown that smooth backfitting performs much better than previous comparable smoothing kernel based backfitting approaches, in particular in high dimensional problems and with correlated covariates, see  \cite{Nielsen:Sperlich:05}. A theoretical comparison between classical and smooth backfitting for additive regression models was recently done in \citep{Huang:Yu:19}, explaining why smoothing of all components has a better adaption.
 Since the initial smooth backfitting paper many variations and extensions have been developed using smooth backfitting to tackle more sophisticated problems in mathematical statistics,  \cite{mammen2003generalised},  \cite{yu2008smooth}, \cite{mammen2009nonparametric}, \cite{mammen2014additive}, \cite{han2018smooth}, \cite{mammen2022backfitting}, \cite{Bissantz:etal:16}, \cite{han2019additive}, \cite{jeon2020additive}, \cite{hiabu2021smooth} and \cite{gregory2021statistical}.

The aim of the current paper is to transfer the original approach of additive non-parametric structures to marker dependent hazard estimation and to allow for a potentially high number of covariates with possibly correlated markers. It turns out that when the original estimation problem is phrased as a minimisation problem in the correct way via a counting process formulation, then our smooth backfitting additive hazard approach can be implemented and analysed in a very similar way to smooth backfitting in regression. We see this as a necessary step to understand more complicated structures in marker dependent hazards. 
The additive subspace is closed making analysis more accessible and the additive structure allows for a  more immediate interpretation than more complicated models of structured hazards.
One important alternative structure is the multiplicative or proportional hazard model.  Survival analysis  practitioners often work with such multiplicative marker dependent hazard models, including the Cox model. Smooth backfitting for the  multiplicative model was recently analysed in \cite{hiabu2021smooth}, where the analysis was challenged by the shape of the multiplicative subspace that is not closed like the additive subspace is and where some tricks had to be developed, e.g. a  solution weighted optimization,  to arrive at a tractable estimation method and analysis. The additive approach developed in this paper does not face  these two latter challenges and it had perhaps been more natural to have developed this current paper first and then \cite{hiabu2021smooth} afterwards. Both this current paper and \cite{hiabu2021smooth} arrive at the same conclusion for smooth backfitting of marker dependent hazard estimators as the authors in \cite{Nielsen:Sperlich:05} did for smooth backfitting of non-parametric regression: Smoothing all components in every iteration step and not only smoothing the component that is  being updated is important. Otherwise the estimator breaks down in many cases - in particular in high dimensions - where smooth backfitting still works. Smooth backfitting seems more reliable than classical backfitting of kernel estimators and we expect that the additive marker dependent hazard model and estimator of this paper can be an important starting point for further developments of structured marker dependent hazard approaches in survival analysis, just like the many developments we have seen in non-parametric regression.
Code to replicate our simulation and application can be found under \url{https://github.com/MHiabu/Replicate-Smooth-Backfitting-for-Additive-Hazard-Rates}.
In the next section we give some insight on the additive model itself and its role in marker dependent hazard models as a practical survival analysis tool.

\section{Additive structured hazards and related literature}

One well known model in hazard regression is the proportional hazards model of \cite{Cox:72} and it is has been seen as the natural equivalent to additive regression functions in linear and nonparametric regression. 
As pointed out in \cite[p.~103]{Martinussen:Scheike:06}, additive hazard models  have been ``somewhat overlooked in practice'' although they share the same advantages of additive regression models concerning both theoretical properties and implementation. To the best knowledge of the authors, this is still the case, with some exceptions \citep{%Martinussen:Vansteelandt:13,
Tchetgen:etal:15,Aalen:etal:19,Dukes:etal:19%,Ying:etal:19
}. However, in certain applications an additive relationship in the hazard function is indeed more plausible than a proportional one \citep{Beslow:Day:87,Lin:Ying:94,Kravdal:97,McDaniel:etal:19}. 
Moreover, \cite[pp.155f]{Aalen:etal:08} provides a variety of reasons for additive risk factors. 

In the original additive hazards model  \citep{Aalen:80}, %(also Aalen:89,Aalen:93)
the intensity of a counting process $\{N(t):t\in[0,1]\}$, conditional on the $d$-dimensional covariate $Z(t)=(Z_1(t), \dots, \allowbreak Z_d(t))^T$, satisfies 
\begin{equation}\label{eq:simple:additive:model}
\lambda(t) =  Z^T(t)\beta(t) Y(t)
\end{equation}
at time $t$ with a regression coefficient $\beta(t)=(\beta_1(t),\dots,\beta_d(t))^T$ and exposure $Y$ which is equal to unity when an individual is at risk. An overview about this model is given in \cite{Martinussen:Scheike:06} in which the authors praise it as a simple nondistributional model that is easy to implement. Nonparametric estimators of the cumulative regression coefficient $B(t) =\int_0^t \beta(s) \mathrm d s$ in model \eqref{eq:simple:additive:model} have been examined in \cite{McKeague:88} and \cite{Huffer:McKeague:91} among others.  
%The authors focus on the estimation of the cumulative regression coefficient $B(t) =\int_0^t \beta(s) \mathrm d s$ and prove uniform $n^{1/2}$-rate convergence of their estimator $\hat B$. 
%Based on a weighted least squares estimator for the cumulative hazard functions $A_j(t) = \int_0^t \alpha_j(s) \mathrm ds$, asymptotic theory for an estimator of $\alpha$ is derived in \cite{Huffer:McKeague:91}.

Model \eqref{eq:simple:additive:model} imposes a linear relationship between the intensity and the value of the covariates through $Z^T(t)\beta(t) $. 
%This is often too restrictive and does not reflect the data. 
%\begin{center} \textbf{find reference for this} \end{center}
%Hence, 
We  loosen up the assumption of linearity. 
Before introducing the model we investigate in this article, we describe the most general model and its disadvantages and explain why we assume certain additive constraints. The completely nonparametric conditional intensity model 
\begin{equation} \label{eq:unstructured}
\lambda(t) = \alpha(t|Z)  Y(t)
\end{equation}
for a conditional hazard function $\alpha$ generalizes model \eqref{eq:simple:additive:model} making it the most flexible model. As it is common, we assume 
$ 
\alpha(t|Z) = \alpha(t,Z(t))
$  in this paper, i.e.\ that the conditional hazard at time $t$ given the covariates only depends on the values of the covariates at time $t$ and not on the values of the past. 
%The conditional hazard is then defined via
%\[
% \alpha(t|Z)=\lim_{h \downarrow 0} h^{-1}\mathbb{P}\left(T\in [t, t+h) | \ T\geq t, \{Z(s), s\leq t\}\right).
%\]

Model \eqref{eq:unstructured} has first been introduced for time-constant covariates in \cite{Beran:81}. Time dependent covariates were considered in \cite{McKeague:Utikal:90} and \cite{Nielsen:Linton:95}. Other examples from the vast literature on nonparametric hazard estimators for this model include \cite{VanKeilegom:Veraverbeke:01} or \cite{Spierdijk:08}. % The latter investigates bandwidth selection %Although it is noted that the generalization to higher dimensions is feasible, it is not given. 
%and for further theory about bandwidth selection (cross-validation and so-called double one-sided cross-validation) for unstructured conditional hazard rates see \cite{Gamiz:etal:16}. 
Without further structural restrictions, estimators of \eqref{eq:unstructured} suffer from the curse of dimensionality: The rate of convergence decreases exponentially.This is a well known issue for unstructured nonparametric estimators, making them in many cases  in-practical already in dimensions higher than, say, three.
That one can not do better in the unstructured nonparametric case is known at least since \cite{Stone:80} who provided formulas for the best possible rate of convergence for nonparametric estimators. Accordingly, the aforementioned nonparametric hazard  estimators were only illustrated for the case with one-dimensional covariate $Z$. 

To overcome this issue, one has to focus on a model that is more restrictive than the unstructured nonparametric hazard model \eqref{eq:unstructured}. We restrict our assumptions on an additive model which is nested in \eqref{eq:unstructured}. However, instead of the original additive Aalen model \eqref{eq:simple:additive:model}, we assume that the hazard rate consists of additive nonparametric components, 
\begin{equation}\label{eq:additive:hazard}
\alpha(t,  z) = \alpha^* + \alpha_0(t) + \alpha_1(z_1) + \dots + \alpha_d(z_d),
\end{equation} 
with smooth, but not further restricted,  components $\alpha_k$, $k=1,\dots d$, depending on  covariate values $z_1,\dots,z_d$.  The constant $\alpha^*$ is a norming constant making the decomposition unique, as will later be further specified.
%  In the sequel and throughout this paper, we refer to model \eqref{eq:additive:hazard} as the additive model and we introduce the concept of smooth backfitting for it. 

The additive model \eqref{eq:additive:hazard} is both more general but also more restrictive than the additive Aalen model \eqref{eq:simple:additive:model}.
It is more restrictive because it does not allow  the effect of covariates on the hazard to change with time. It is  more general because the effect of the covariates on the hazard do not need to be linear.
A very interesting model that  generalises both models is to replace each component $\alpha_k(z_k), k\geq 1,$ in  \eqref{eq:additive:hazard}  by a two-dimensional components $\alpha_k(t, z_k)$
capable of capturing a covariate effect that changes  with time. While we do not consider this more general  setting in this paper, we see the work done in this paper as a crucial step towards developing methods of such a more general kind. Another possible generalisation is to consider multiple time scales, see e.g. \cite{hiabu2021nonsmooth}.

To estimate the components in   \eqref{eq:additive:hazard},  we propose a  local polynomial least squares minimisation under the constraint \eqref{eq:additive:hazard}. The solution can be identified with the projection of the observation into the space of local polynomial additive hazard functions and can be calculated through a simple iterative procedure. 
We call the resulting estimator additive smooth backfitting hazard estimator. 

When estimating the hazard function  $\alpha(t,  z)$, by  the nature of equation \eqref{eq:additive:hazard},  it can happen that the  estimate is negative at certain points.
This is expected to happen especially more if the underlying hazard function is far from being additive.  However, it is reassuring that the smooth backfitting components $\hat \alpha_k$ will still have a clear interpretation as approximation of the closest additive fit. In practice, if probabilities need to be calculated, one ad-hoc solution is to  use the non-additive  adjusted hazard
\[
\alpha^{adj}(t,z)= \max(\alpha(t,z),\varepsilon), \varepsilon\geq 0.
\]
Indeed, this is also what we do in the application Section \ref{sec:cprs} for $\varepsilon=0$ with satisfying results.

\section{The additive hazard model} \label{sec:model5}
Let $\mathcal{T} >0$. We observe $n$ \textit{i.i.d.}~copies of the stochastic processes $\{(N(t),Y(t),Z(t)) : t\in [0,\mathcal{T}]\}$ where $N$ is a right-continuous counting process which is zero at time zero and which has jumps of size one. We assume that $Y$ is a left-continuous stochastic process with values in  $\{0,1\}$ and which equals unity if the observed individual is at risk. Moreover, let $Z$ be a $d$-dimensional left-continuous stochastic process with $Z(t) \in [0,R]^d$, $t\in[0,\mathcal{T}]$, for some $R>0$. The multivariate process $((N_1,Y_1,Z_1), \dots, \allowbreak (N_n,Y_n,Z_n))$ is assumed to be adapted to the filtration $\{\mathcal F_t:t\in[0,\mathcal{T}]\}$ which satisfies the \emph{usual conditions} \citep[p.~60]{Andersen:etal:93}. 

In the following, we assume that for each $i=1,\dots,n$, the process $N_i$ satisfies Aalen's multiplicative intensity model, i.e. that its intensity $\lambda_i$ satisfies 
\begin{equation} \label{eq:aalens:model}
\lambda_i(t)=\lim_{h \downarrow 0} h^{-1}\mathbb E[N_i((t+h)-)-N_i(t-)|\ \mathcal F_{t-}]=\alpha(t,Z_i(t))Y_i(t),
\end{equation}
where $Y_i(t)$ is indicating if individual $i$ is at risk at time $t$. 
The function $\alpha(t,Z(t))$ is the conditional hazard rate given the covariates $Z$ at time $t$. Furthermore, we assume that $\alpha$ satisfies the additive structure of model \eqref{eq:additive:hazard}, which we write as
\[
\alpha(t,Z_i(t)) = \alpha^* + \sum_{j=0}^d \alpha_j(X_{ij}(t))
\]
with the notation $X_i(t)=(t,Z_{i1}(t),\dots,Z_{id}(t))\in \mathcal X$ for $\mathcal X =[0,\mathcal{T}]\times [0,R]^d$. In the sequel, we will also write $x=(t,z_1,\dots,z_d)\in \mathcal X$ and henceforth $\alpha(x)=\alpha(t,z)$ for short.

Each component of the additive hazard $\alpha$ is only identifiable up to an additive shift. Later, we will give conditions under which each component is uniquely identified. 

%\subsection{Left truncation and right censoring time as covariates} 
%\begin{center} \textbf{check if too much copied from SB for multiplicative hazard from here on} \end{center}

Model \eqref{eq:aalens:model} allows for different kind of filtered data making it very flexible. 
These filterings include left-truncation and right-censoring which occurs in many applications of survival analysis \citep{Martinussen:Scheike:06}. %the application in Section \ref{sec:application}.
We now illustrate how to embed left-truncated covariates and right-censored survival time into model \eqref{eq:aalens:model}. 
Let $T$ denote the survival time. 
Left-truncation means that we observe copies of $(T,Z)$ only on a compact subset $\mathcal I \subseteq \mathcal X$ with the property that 
 $(t_1,Z(t_1))\in \mathcal I$  and  $t_2\geq t_1$ imply $(t_2,Z(t_2)) \in \mathcal I$ almost surely.
We allow $\mathcal I$ to be random but assume it is independent from $T$ given $Z$. 
The survival time $T$ can also be subject to right censoring with censoring time $C$ as long as $C$ is conditionally independent from $T$ given the covariate process $Z$. 
This condition holds in particular if the censoring time equals one of the components of $Z$. 
Hence, under this filtering scheme, we observe $n$ \textit{i.i.d.}\ copies of $(\widetilde T , Z^{*}, \mathcal I,\delta)$, where 
$\delta=\mathds 1 (T^{*} <C), \ \widetilde T= \min (T^{*},C)$, and $(T^{*}, Z^{*})$ is the truncated version of $(T,Z)$,  i.e, $(T^*, Z^*)$  arises from $(T,Z)$ by conditioning on the event  $\{(T,Z(T))\in \mathcal I\}$.

We can now define a counting process $N_i$ for each individual $i=1,\dots,n$, via 
\[N_i(t)=\mathds 1 \left\{\widetilde T_i \leq t,\ \delta_i=1\right\}, 
\] with respect to the filtration
$\mathcal F_{i,t}=\sigma \left( \bigg\{\widetilde T_i\leq s,\  Z^   *_i(s), \ \mathcal I_i, \ \delta_i : \ s\leq t\bigg\} \cup \mathcal N\right),$
for a class of null-sets $\mathcal N$, which completes the filtration. 
In this setting it can be easily shown that, under above assumption of $\alpha(t|Z)=\alpha(t,Z(t))$, Aalen's multiplicative intensity model \eqref{eq:aalens:model} is satisfied with hazard rate 
\begin{align*}
\alpha(t,z) =\lim_{h \downarrow 0} h^{-1}\mathbb P\left(   T_i\in [t, t+h)| \   T_i\geq t, \ Z_i(t)=z\right), 
\end{align*}
and exposure
\begin{align*}
Y_i(t)&=  \mathds 1 \big\{(t,Z^*_i(t))\in \mathcal I_i, \ t\leq \widetilde T_i\big\},
\end{align*}
for individual $i$. 
The sets $\mathcal I_i$ are allowed to be independent random copies of $\mathcal I$.

\section{The smooth backfitting estimator of additive hazards} \label{sec:estimators5}

\subsection{Smooth backfitting hazard estimator as projection} \label{appendix:projection}

In this and the next section we illustrate the equivalence of projections and estimators that minimize squared errors following the line of \cite{Mammen:etal:99} where smooth backfitting was first introduced for nonparametric regression.  The idea of describing smoothing estimators as projections in a regression setting  is explained in great detail in \cite{Mammen:etal:01}. 
In the following we  introduce this projection principle for a counting process framework. 

We will introduce our estimators as a projection from a functional space $\mathcal H$ onto a certain subspace. The choice of the subspace, implies the class of functions that can be estimated and also the class of estimators to be considered. We now  specify these functional spaces as well as (semi-)\linebreak[0]{}norms.

We define the unrestricted functional space as
\[
\mathcal H = \{ (f^{i,j})_{i =1 , \dots, n, j = 0, \dots, d+1} ;  f^{i,j}:\mathbb{R}^{d+2} \to \mathbb{R}  \}, 
\]
and subsets $\mathcal H_{full}^{LC} \subseteq  \mathcal H_{full}^{LL} \subseteq \mathcal H$ via 
\begin{align*}
\mathcal H_{full}^{LL} = \{ f \in \mathcal H : &f^{i,j}(s,x) \text{ does not depend on } i,s\}, \\
\mathcal H_{full}^{LC} = \{ f \in \mathcal H : &f^{i,j}(s,x) \text{ does not depend on } i,s; \\
&f^{i,j}(s,x) \equiv 0 \text{ for } j=1,\dots, d+1\}.
\end{align*}

Furthermore, for additive hazard functions we define additive subsets 
\begin{align*}
\mathcal H_{add}^{LL} = \{ f \in \mathcal H_{full}^{LL} : &f^{i,0}(s,x)  = \sum_{j=0}^d g_j(x_j) ;  %\\  &
f^{i,j}(s,x) = h_j(x_j)  ,\  j=1,\dotsc,d+1, \\
&\text{for some functions } g_j, h_j:\mathbb R\to \mathbb R  \},  \\
\mathcal H_{add}^{LC} = \{ f \in \mathcal H_{full}^{LC}: &f^{i,0}(s,x)  = \sum_{j=0}^d g_j(x_j) \text{ for some functions } g_j :\mathbb R\to \mathbb R  \},
\end{align*}
that contain the class of local linear and local constant hazard estimators, respectively. 
Moreover, we define a semi-norm $\lVert \cdot \rVert$ on $\mathcal H$ through
\begin{align*}
%\lVert f \rVert^2 = \int \frac 1 n \sum_{i=1}^n \left[ f^{i,0}(x) + \sum_{j=1}^d f^{i,j} (x) \frac{x_j-X_j^i}{h}\right ]^2 \prod_{j=1}^d K_h(X_j^i - x_j) \mathrm dx 
\lVert f \rVert^2 = \int \int \frac 1 n \sum_{i=1}^n &\left[ f^{i,0}(s,x) + \sum_{j=0}^{d} f^{i,j+1} (s,x) \left(\frac{x_j-X_{i,j}(s)}{h}\right) \right ]^2  \\
&\times   Y_i(s) K_h(x- X_i(s)) \mathrm ds \, \mathrm d\nu(x), 
\end{align*}
for $f\in\mathcal H$ and where $\nu$ is a measure with strictly positive density. % and for brevity we  write  $\lVert \cdot \rVert$ for  its induced norm $\lVert \cdot \rVert_\mathcal G$ on every subspace $\mathcal G \subseteq \mathcal H$. \\
This semi-norm will be used to define the projection in the sequel. 

Next we will illustrate how $\mathcal H $ contains both hazard functions and the observations $(N_i)$, $i=1,\dots,n$.  
For every $\varepsilon >0$, the data can be identified with an element $\Delta_\varepsilon N \in \mathcal H$ via 
\[
\Delta_\varepsilon N^{i,0}(s,x) = \frac 1 \varepsilon \int_s^{s+\varepsilon} \mathrm  dN_i(s), \ \ \ \ \Delta_\varepsilon N^{i,j}(s,x) \equiv 0, \ \ \ \ \ j=1,\dotsc,d+1. 
\]

We define the unstructured local constant and local linear hazard estimator  as 

\begin{align}\label{eq:unstructred:estimator}
\lim_{\varepsilon \to 0} \argmin_{\theta\in\mathcal H_{full}^{LC}} \lVert \Delta_\varepsilon N - \theta \rVert, \ \ \ \ \ 
\lim_{\varepsilon \to 0} \argmin_{\theta\in\mathcal H_{full}^{LL}} \lVert \Delta_\varepsilon N - \theta \rVert,
\end{align}

respectively.
One can easily verify that these estimators coincide with the well known local constant and local linear hazard marker dependent hazard estimators introduced in \cite{Nielsen:Linton:95} and \cite{Nielsen:98b}.

For $\varepsilon \to 0$, each element $\Delta_\varepsilon N^{i,0}$ converges to a Dirac delta function. Hence, we write 
\[
\min_{\theta\in\mathcal G} \lVert \Delta N - \theta \rVert := \lim_{\varepsilon \to 0} \min_{\theta\in\mathcal G} \lVert \Delta_\varepsilon N - \theta \rVert,
\]
for $\mathcal G\subseteq \mathcal H$. 

We define the local constant and local linear nonparametric additive hazard estimator respectively as
\begin{align}\label{eq:proj:estimator}
 \argmin_{\theta\in\mathcal H_{add}^{LC}} \lVert \Delta N - \theta \rVert, \quad 
 \argmin_{\theta\in\mathcal H_{add}^{LL}} \lVert \Delta N - \theta \rVert.
\end{align}

For the minimisation over all additive hazard functions, we can either use a direct projection into $\mathcal H^P_{add}$, $P\in\{LC,LL\}$ which is given by $\min_{\theta\in \mathcal H^P_{add}} \lVert \Delta N - \theta \rVert $  or we use a Pythagorean argument to project in two steps: For $\hat \alpha \in \mathcal H^P_{add}$, it holds $ \lVert \Delta N - \hat\alpha \rVert^2= \lVert \Delta N -\tilde \alpha   \rVert^2 + \lVert  \tilde \alpha - \hat\alpha \rVert^2$ with  $\tilde \alpha\in\mathcal H_{full}^P$. The last identity holds because the elements  $\Delta N - \tilde \alpha$ and $ \tilde \alpha - \hat\alpha $ are orthogonal \citep{Mammen:etal:01}.  In additive marker dependent hazard estimation, the unrestricted marker dependent hazard estimators can be understood as intermediate  in an iterative projection procedure that first projects to   the unrestricted space and then to the additive space.

\subsection{Smooth backfitting hazard estimator via least squares} \label{subsec:introduce:estimators}

In the previous section, we introduced the local constant estimator as a projection from $\mathcal H$.
In this section, we show how this connects to the more known least squares criteria, and thereby also state the estimator in a way that is more directly mathematically tractable.
We first consider the unstructured local polynomial hazard estimators. For a general understanding, we write down the general formulation for polynomials of order $p$, but in this paper we will only consider the local constant and the local linear case, $p=0,1$.

We will estimate the additive components of the hazard function via kernel smoothers. Let
$k:\mathbb R \to \mathbb R$ be a symmetric and continuous kernel function such that $\int k(u) \mathrm du =1 $.
We define  $K(u_0,\dotsc,u_d)=\prod_{j=0}^d k(u_j)$. For a   smoothing parameter $h>0$,
$K_h(u)=\prod_{j=0}^d k_h(u_j)=\prod_{j=0}^d h^{-1}k(h^{-1}u_j)$. 
In the sequel, we will use a modification of the kernel function to ensure that the kernel always integrates to unity. 
We replace $k_h(u-v)$ by
\begin{equation} \label{eq:boundedkernel}
k_h(u,v) = I_{(u,v \in [0,1])}\left( \int k_h(s-v)\mathrm ds \right)^{-1} k_h(u-v)
\end{equation}
for every $h>0$ to correct for normalization at the boundaries from now on. %This ensures $\int k_h(u,v)\mathrm dv = 1$ for all $u\in[0,1]$. 
Furthermore, we define the multivariate kernel 
\[
 K_h(u,v) = \prod_{j=0}^d   k_h(u_j,v_j), 
\]
for $u=(u_0,\dots,u_d)$ and $v=(v_0,\dots,v_d)$.

The unstructured $p$th order local polynomial estimator of the hazard function in $x$ is defined as the first component of
\begin{align}
\begin{split} \label{eq:criterion}
\lim_{\varepsilon \to 0} \argmin_{\substack{\theta_0: \mathbb R^{d+1} \to \mathbb R \\ \theta_j:\mathbb R^{d+1} \to \mathbb R^{d+1} \\ j=1,\dots,p}} \sum_{i=1}^n \int \int  &\left \{ \frac 1 \varepsilon \int _s^{s+\varepsilon} \mathrm dN_i(u) - \theta_0(x) \right.\\
&-\theta_1^T(x)\left(\frac{x_0 - X_{i0}(s)}{h}, \dots, \frac{x_d - X_{id}(s)}{h}  \right)^T  - \dotsc \\
& \left.  - \theta_{p}^T(x)\left(\left(\frac{x_d - X_{id}(s)}{h}\right)^p, \dots,\left(\frac{x_d - X_{id}(s)}{h}\right)^p \right)^T\right\}^2 \\ &\times K_h(x,X_i(s)) Y_i(s) \mathrm ds\, \mathrm) d\nu(x) ,
\end{split} % \notag
\end{align}

The cases $p=0,1$ are exactly the local constant and local linear projection estimator defined in \eqref{eq:unstructred:estimator}.

For the rest of this paper, we limit ourselves to the same kernel $k$ and bandwidth $h$ for each dimension to keep the notation simple. 
Henceforth, if there is no confusion about the boundaries of the integrals, $\int$ denotes integration over the whole support $[0,\mathcal{T}]\times[0,R]^d$. %or any rectangular subsets of lengths $\mathcal{T}$ or $R$, respectively. 
The measure $\nu$ has to have a strictly positive density but the estimator does not depend on the specific choice of $\nu$ if we don't have restrictions on the functions $\theta_j$.
We will specify a weighting function $w$ such that $\mathrm d\nu(x) = w(x) \mathrm dx$. 
Note that this estimator allows for local polynomial approximation at degree $p$ but it is not additive yet. 

The nonparametric additive hazard estimator we investigate in this paper is defined by the minimisation in equation \eqref{eq:criterion} under the following constraints on the structural form of $\theta$. 
For $p=0$,  the constraint $\theta_0(x) = \bar \alpha ^* + \sum_{j=0}^d \bar \alpha_j(x_j)$ for some functions $\bar \alpha_0,\dots, \bar \alpha_d$ and a constant $\bar \alpha^*$, leads to the local constant estimator as introduced in \eqref{eq:proj:estimator}:
\begin{align}
\begin{split} \label{eq:criterion:LC}
 \lim_{\varepsilon \to 0} \argmin_{\substack{\bar\alpha^* \in \mathbb R, \\ \bar \alpha_j:\mathbb R \to \mathbb R, \\ j = 0,\dots,d}} \sum_{i=1}^n \int \int  &\left \{ \frac 1 \varepsilon \int _s^{s+\varepsilon} \mathrm dN_i(u) - \left[ \bar \alpha^* +\bar \alpha_0(t) + \bar\alpha_1(z_1) + \dots \bar \alpha_d(z_d) \right ] \right \}^2 \\ 
& \times K_h(x,X_i(s)) Y_i(s) \mathrm ds \, \mathrm d\nu(x). 
\end{split} 
\end{align}
For the unique identification of the constant component $\alpha^*$ and the components $\alpha_j$, $j=0,\dots,d$, we will set further constraints in equation \eqref{eq:cond}. 
%Furthermore, we will take a weighting function $w$ such that $\mathrm d\nu(x) = w(x) \mathrm dx$ which is to be specified later. We will establish existence and uniqueness of the estimator. 

The local linear additive hazard estimator as defined in \eqref{eq:proj:estimator} arises by setting $\theta_0(x) = \bar \alpha ^* + \sum_{j=0}^d \bar \alpha_j(x_j)$ and $\theta_1(x)=\left(\partial / \partial x_0 \theta_0(x), \ldots, \partial / \partial x_d \theta_0(x)\right)$.
\begin{align}
\begin{split} \label{eq:criterion:LL}
 \lim_{\varepsilon \to 0} \argmin_{\substack{ \bar \alpha^* \in \mathbb R, \\ \bar \alpha_j:\mathbb R \to \mathbb R, \\ \bar \alpha'_j:\mathbb R \to \mathbb R,  \\ j = 0,\dots,d}} \sum_{i=1}^n \int \int  &\left \{ \frac 1 \varepsilon \int _s^{s+\varepsilon} \mathrm dN_i(u) -  \Big[ \bar \alpha^* + \bar \alpha_0(t) +\bar \alpha_1(z_1) + \dots \bar \alpha_d(z_d)   \right.  \\ 
& \ +  \left. \bar \alpha_0'(x_0)\left(\frac{x_0 - X_{i0}(s)}{h}\right)+ \dots  + \bar \alpha'_d(x_d)\left(\frac{x_d - X_{id}(s)}{h}\right) \Big] \right \}^2  \\
& \times K_h(x,X_i(s)) Y_i(s) \mathrm ds \,\mathrm d\nu(x). 
\end{split} 
\end{align}
Existence and uniqueness of the minimizers of \eqref{eq:criterion:LC} and \eqref{eq:criterion:LL} will be established later. 

\subsection{The local constant smooth backfitting additive kernel hazard estimator}\label{sec:loc:const}
The minimisation  in equation \eqref{eq:criterion} for $p=0$ leads to the unstructured  local constant estimator $\hat\alpha^{LC}$ defined via $\hat \alpha^{LC}(x)= \hat O(x) / \hat E(x) $ with
\begin{align*}
\hat O(x) &= \frac 1 n   \sum_{i=1}^n \int   K_h(x,X_i(s))\mathrm dN_i(s), \\
\hat E(x) &=  \frac 1 n \sum_{i=1}^n \int  K_h(x,X_i(s)) Y_i(s) \mathrm ds.
%,\\ \kappa_n(x) &= \left(  n \int K_h(x-u) \mathrm du  \right)^{-1}.
\end{align*}
for $x\in\mathcal X$. 
The estimators $\hat O$ and $\hat E$ estimate the occurrence and exposure of the observations. The exposure $E$ is defined via $E(x)= f_t(z) \mathbb E[Y(t) ]$ where $f_t(z)$ is the conditional density of $(Z_1(t),\dots,Z_d(t))$ given $Y(t)=1$. The occurrence is defined as $O(x)=\alpha(x)E(x)$ for $x=(t,z)\in \mathcal X$. The structure of a hazard estimator as an estimator of occurrence divided by an estimator of exposure is in line with piece-wise constant hazard estimators in \cite{Martinussen:Scheike:02}. 

To define the local constant smooth backfitting additive hazard estimators we proceed as follows. 
%In this chapter, we just write for short $\hat E= \hat E^{LC}$ and $\hat O = \hat O^{LC}$.
Following the derivation in Section \ref{subsec:introduce:estimators}, the estimator is defined through equation \eqref{eq:criterion:LC}. 
The solution $\bar\alpha=(\bar\alpha^*,\bar\alpha_0,\dots,\bar\alpha_d)$ satisfies the first order conditions
\begin{equation}\label{eq:alphastar}
\bar \alpha^* = \frac{ \int_{\mathcal{X}} [\hat \alpha^{LC}(x)-\sum_{j=0}^d \bar\alpha_j(x_j)]w(x)\mathrm dx}{\int_{\mathcal{X}}w(x)\mathrm dx}
\end{equation}
and
%\[
%\bar \alpha_k(x_k) = \frac{ \int_{\mathcal{X}_{x_k}}  [\tilde \alpha(x) - \bar \alpha^* - \sum_{j\neq k} \bar \alpha_j(x_j)]w(x) \mathrm  dx_{-k}}{\int_{\mathcal{X}_{x_k}}  w(x) \mathrm dx_{-k}} 
%,\]
%which can be written as
\begin{equation}\label{eq:backfit}
\bar \alpha_k(x_k) = \int_{\mathcal{X}_{x_k}} \hat \alpha^{LC}(x) \frac{w(x)}{w_k(x_k)} \mathrm dx_{-k} - \sum_{j\neq k} \int_{\mathcal{X}_{x_k}} \bar\alpha_j(x_j) \frac{w(x)}{w_k(x_k)}  \mathrm dx_{-k}  -  \bar\alpha^*
,\end{equation}
for $k=0,\dots,d$, where we write $w_k(x_k) = \int_{\mathcal{X}_{x_k}}  w(x) \mathrm  dx_{-k}$ for the marginals of $w$ using the notation $\mathcal{X}_{x_k} = \{ y \in \mathcal{X} : y_x = x_k\}$ and $dx_{-k}$ denoting integration over all components except for $k$. %c.f. equation (8) in \cite{Mammen:etal:99}.
For the unique identification of the solution we also set the conditions 
\begin{equation}\label{eq:cond}
\int_{\mathcal X_{k}}  \bar \alpha_k(x_k) w_k(x_k) \mathrm dx_k = 0, \ \ \ \ \ k=0,\dotsc,d.
\end{equation}

%Note that this results in the components $\bar \alpha_k$ being negative for some values of $x$. The additive factor $\bar \alpha^*$ adjusts $\bar \alpha$ making it non-negative. 
These identification conditions enable us further to get 
\begin{align*}
\bar \alpha^* %&= \frac{ \int_{\mathcal{X}} \tilde \alpha(x) w(x)\mathrm  dx- \sum_{j=0}^d \int_{\mathcal X_{x_j}} \bar\alpha_j(x_j) \int_{\mathcal X_{-j}} w(x)\mathrm dx_{-j}\mathrm  dx_j}{\int_{\mathcal{X}}w(x)\mathrm dx} \\
%&= \frac{ \int_{\mathcal{X}} \tilde \alpha(x) w(x)\mathrm dx - \sum_{j=0}^d \int_{\mathcal X_{x_j}} \bar\alpha_j(x_j) w_j(x_j) \mathrm dx_j}{\int_{\mathcal{X}}w(x )\mathrm dx} \\
&= \frac{ \int_{\mathcal{X}} \hat \alpha^{LC}(x) w(x)\mathrm dx}{\int_{\mathcal{X}}w(x)\mathrm dx}
= \frac { \int_{\mathcal{X}} \hat O(x) \mathrm dx } {\int_{\mathcal{X}} \hat E(x)\mathrm  dx}  
\end{align*}
from equation \eqref{eq:alphastar}, where the second equality arises from the definition of $\hat \alpha$ and if we set the weighting to $w(x) = \hat E(x)$. 
%\[
%\bar \alpha^* = \frac { \int_{\mathcal{X}} \hat O(x) \mathrm dx } {\int_{\mathcal{X}} \hat E(x)\mathrm  dx}  %\overset{LC}{=}  \frac { \sum_{i=1}^n  \int \mathrm dN_i(s) } {\sum_{i=1}^n \int Y_i(s) \mathrm  ds}, % = \frac{\int \mathrm dN(s) }{\int Y(s) \mathrm ds},
%\]
%where we write denote $N(s) = \sum_{i=1}^d N_i(s)$ and $Y(s) = \sum_{i=1}^d Y_i(s)$. 
One can further reduce the estimator to 
\begin{equation} \label{eq:alphastar:simple}
\bar \alpha^{*} = \frac { \sum_{i=1}^n  \int \mathrm dN_i(s) } {\sum_{i=1}^n \int Y_i(s) \mathrm  ds}. 
\end{equation}
%\begin{center} \textbf{ also for LL(?) } \end{center}
This simplification is  due to the normalization $\int   K_h(x,X_i(s)) \mathrm d x =1$ of the kernel function $K_h$ in \eqref{eq:boundedkernel}. The estimator $\bar \alpha^*$ is the additive hazard equivalent of the intercept in nonparametric regression. 
Note that in backfitting of the regression function  $m$ in \cite{Mammen:etal:99}, the estimator for the additive constant $m_0$ of the conditional mean $m$ is given as $\tilde m_0 = \bar Y_n$. 
Our result for $\bar \alpha^*$ is the total number of occurrences divided by the average exposure time. In the case of non-filtered data, $ \int \mathrm dN_i(s)$ equals unity for every $i$ and thus $\bar \alpha^*=  \left( \frac 1 n \sum_{i=1}^n \int Y_i(s) \mathrm  ds\right)^{-1}$. This term is the natural survival analysis equivalent to what the empirical mean is in regression. %which one can get by replacing $\hat E$ with an estimator $\hat p$ of the probability density. 

The constant component $\alpha^*$ and all components $\alpha_j$ of the unknown underlying hazard  $\alpha$ are uniquely identified through 
\begin{equation}\label{eq:identification:theory}
\int \alpha_j(x_j) E_j(x_j) \mathrm dx_j = 0
\end{equation}
with $E_j(x_j)=\int E(x) \mathrm dx_{-j}$ for all $j$. This motivates the choice $w(x) = \hat E(x)$  in equation \eqref{eq:cond} and the notation $\hat E_k(x_k)$  instead of $w_k(x_k)$ for this choice of weighting from now on.

For the same data-adaptive weighting we simplify the terms in equation \eqref{eq:backfit} with some new notation. Analogously to the one-dimensional marginals, we write $\hat E_{k,j}(x_k,x_j) =  \int_{\mathcal{X}_{x_k,x_j}}  \hat E(x) \mathrm  dx_{-(k,j)}$ for $x_{-(k,j)} = (x_0,\dotsc,x_{j-1},x_{j+1},\dotsc, x_{k-1},x_{k+1},\dotsc,x_d)$ and $\mathcal X_{x_k,x_j} = \{ (x'_0,\dotsc,x'_d) \in \mathcal X : x'_k = x_k,x'_j = x_j\}$, i.e.\ we integrate over all components except for $x_j$ and $x_k$ which are fixed values. Analogously, we define the marginal occurrence estimator  $\hat O_k(x_k) = \int_{\mathcal X_{x_k}} \hat O(x) \mathrm dx_{-k}$.

In the local constant case investigated here, it can be easily shown that it holds 
\begin{align}
\hat O_k(x_k) &=  \frac 1 n \sum_{i=1}^n    \int {k_h(x_k, X_{ik}(s))}  \mathrm dN_i(s), \label{eq:Ok:LC}  \\
%\hat O_{k,j}(x_k,x_j) &=  \frac 1 n \sum_{i=1}^n    \int \frac{k_h(x_j- X_{ij}(s))}{\int_{\mathcal{X}_j} k_h(u_j-X_{ij}(s)) \mathrm du_j }\frac{k_h(x_k- X_{ik}(s))}{\int_{\mathcal{X}_k} k_h(u_k-X_{ik}(s)) \mathrm du_k}  \mathrm dN_i(s), \\
\hat E_k(x_k) &=  \frac 1 n \sum_{i=1}^n    \int {k_h(x_k, X_{ik}(s))}  Y_i(s) \mathrm ds, \label{eq:Ek:LC} \\
\hat E_{j,k}(x_j,x_k) &=  \frac 1 n \sum_{i=1}^n    \int {k_h(x_j, X_{ij}(s))}{k_h(x_k, X_{ik}(s))} Y_i(s) \mathrm ds,  \label{eq:Ejk:LC} 
\end{align}
for $j\neq k$ if each pair of covariates has a rectangular support. Thus, these estimators are indeed just one- and two-dimensional marginal estimators and can be computed efficiently for high dimensions $d>2$.  %The local linear version can be derived analogously. 
 
Now equation \eqref{eq:backfit} implies the backfitting equation
\begin{align} \label{eq:LC:backfiting:equation}
\bar \alpha_k(x_k) %&= \frac{ \int_{\mathcal{X}_{x_k}}  \hat O(x) -  [\bar \alpha^* - \sum_{j\neq k} \bar \alpha_j(x_j)]\hat E(x) \mathrm  dx_{-k}}{\int_{\mathcal{X}_{x_k}}  \hat E(x)  \mathrm dx_{-k}}  \\ &=  \frac{\hat O_k(x_k)}{\hat E_k(x_k)} -  \sum_{j\neq k}  \frac{\int_{\mathcal{X}_{x_k}}    \bar \alpha_j(x_j)\hat E(x)\mathrm   dx_{-k}}{\hat E_k(x_k)}  -   \bar \alpha^* \\ 
 &= \hat \alpha_k(x_k)  -  \sum_{j\neq k}  \int_{\mathcal{X}_{j}}   \bar \alpha_j(x_j) \frac{\hat E_{k,j}(x_k,x_j)}{\hat E_k(x_k)} \mathrm   dx_{j}  -   \bar \alpha^*,
\end{align}
for the notation $\hat \alpha_k(x_k)= {\hat O_k(x_k)}/{\hat E_k(x_k)}$. 
%Another representation of the last equation is 
%\begin{equation}  \label{eq:backfit:LC:theory}
%\bar \alpha_k(x_k) = \hat \alpha_k(x_k)  - \hat \alpha_{0,k} -  \sum_{j\neq k}  \int_{\mathcal{X}_{j}}   \bar \alpha_j(x_j) \left[\frac{\hat E_{k,j}(x_k,x_j)}{\hat E_k(x_k)} - \hat E_{j,[k+]}(x_j) \right] \mathrm   dx_{j} 
%,\end{equation}
%where we have used the terms
%\begin{align*}
%\hat  E_{j,[k+]}(x_j) &= \frac{\int \hat E_{k,j}(x_k,x_j) \mathrm dx_k }{\int \hat E_k(x_k)\mathrm dx_k},  \\
%\tilde \alpha_{0,k}& =  \frac{\int  \hat \alpha_k(x_k)  \hat E_k(x_k) \mathrm dx_k }{\int \hat E_k(x_k)\mathrm dx_k},
%\end{align*}
%to normalize each component in every step of the estimation. This constitutes a normalization factor during the computation and assures that condition \eqref{eq:cond} holds. 

%This representation is equivalent to equation (8) in \cite{Nielsen:Sperlich:05} with $\hat E_{k,j}(x_k,x_j)$ taking the part of marginal density estimators. 
%This is the equivalent to equation (4.2) in \cite{Hiabu:etal:19}. 

Using the last expression, we can get estimators for $\alpha_0,\dots,\alpha_d$ through iterative backfitting via
%\[ \bar \alpha^{[r+1]}_k(x_k) =   \tilde \alpha_k(x_k)  -  \bar \alpha^* -  \sum_{j < k}  \int_{\mathcal{X}_{j}}   \bar \alpha^{[r+1]}_j(x_j) \frac{\hat E_{k,j}(x_k,x_j)}{\hat E_k(x_k)} \mathrm   dx_{j}   - \sum_{j >  k}  \int_{\mathcal{X}_{j}}   \bar \alpha^{[r]}_j(x_j) \frac{\hat E_{k,j}(x_k,x_j)}{\hat E_k(x_k)} \mathrm   dx_{j}, \]
\begin{align}\begin{split} \label{eq:backfit:LC}
\bar m^{[r+1]}_k(x_k) &=   \hat \alpha_k(x_k)   -  \sum_{j < k}  \int  \bar \alpha^{[r+1]}_j(x_j) \frac{\hat E_{k,j}(x_k,x_j)}{\hat E_k(x_k)} \mathrm   dx_{j}    - \sum_{j >  k}  \int    \bar \alpha^{[r]}_j(x_j) \frac{\hat E_{k,j}(x_k,x_j)}{\hat E_k(x_k)}  \mathrm   dx_{j},  \\
\bar \alpha^{[r+1]}_k(x_k) &= \bar m^{[r+1]}_k(x_k) - \left( \int \hat E_k(x_k) \mathrm dx_k\right)^{-1}  \int \bar m^{[r+1]}_k(x_k) \hat E_k(x_k) \mathrm dx_k, 
 \end{split} 
\end{align}
for $k=1,\dotsc,d$ in step $r+1$. 
Recall that $\hat \alpha_k$, $k=0,\dots,d$, are the (non-additive)  estimators which were defined via $\hat \alpha_k(x_k)= {\hat O^{}_k(x_k)}/{\hat E^{}_k(x_k)}$.
We suggest to start with the initialization $\bar \alpha^{[0]}_k(x_k) = \hat \alpha_k(x_k)$, that is related to the one-dimensional local linear hazard estimator, see \cite{Nielsen:Tanggaard:01}.  However, these pilot estimators can be set to different estimators. %In the appendix, we introduce certain technical conditions the pilot estimator has to satisfy. 
The asymptotic theory we present here is illustrated for the choice $\hat \alpha_k$.  In Section \ref{sec:twostep} of the appendix, we illustrate how one can obtain the same estimator $\bar\alpha_k$ by first minimizing \eqref{eq:criterion} without an additive constraint, yielding the pilot estimator $\hat \alpha_k$ and then running an additive minimisation of $\hat \alpha_k$.  \\

The complete smooth backfitting algorithm for the local constant additive hazard estimator $\bar\alpha$ is as follows.
\begin{enumerate}
\item Compute $\hat O_k$, $\hat E_k$, and $\hat E_{j,k}$ from equations \eqref{eq:Ok:LC}--\eqref{eq:Ejk:LC} and set $\hat \alpha_k(x_k) =\allowbreak {\hat O_k(x_k)}/{\hat E_k(x_k)}$ for $k,j=0,\dots,d$. 
\item Set $r=0$ and $\bar \alpha^{[r]}_k= \hat \alpha_k $ for  $k=0,\dots,d$. 
\item For $k=0,\dots,d$, compute $\bar \alpha^{[r+1]}_k(x_k)$ via equation \eqref{eq:backfit:LC}  for all points $x_k$. 
\item If the convergence criterion
\[
\frac{\sum_{k=0}^d \int \left(\bar \alpha^{[r+1]}_k(x_k) - \bar \alpha^{[r]}_k(x_k) \right)^2 \mathrm dx_k}{\sum_{k=0}^d \int \left(\bar \alpha^{[r+1]}_k(x_k) \right)^2 \mathrm dx_k + 0.0001} < 0.0001
\]
is fulfilled, stop; otherwise set $r$ to $r+1$ and go to step 3.
\item After convergence in step $r$, set $\bar \alpha_k= \bar \alpha_k^{[r+1]}$ for $k=0,\dots,d,$ and 
% \[
% \bar \alpha(x) = \bar \alpha^{*} + \sum_{j=0}^d \bar \alpha_k(x_j), 
% \]
%for
$\bar \alpha^{*} =  { \sum_{i=1}^n  \int \mathrm dN_i(s) }/{\sum_{i=1}^n \int Y_i(s) \mathrm  ds}$.  %from equation  \eqref{eq:alphastar:simple}.
\end{enumerate}

Note that the quantities $\hat E_{j,k}(x_j,x_k)$, $\hat E_k(x_k)$, $\hat \alpha(x_k)$, and $\bar \alpha^*$  can be calculated once in the beginning and they are not updated during the iteration process. This is a computational advantage. 
However, we want to emphasize that the downside of the analogue local linear approach to this section is that the local linear pilot estimator does not necessarily exist for low numbers of observations in high dimensions. The local constant estimator on the other hand suffers from bad performance at boundaries.

\subsection{Asymptotic properties of the local constant  smooth backfitting additive kernel hazard estimator} \label{sec:result:lc}
%\subsection{Asymptotic properties} \label{sec:result:direct:projection}
We now  derive the asymptotic behavior of the local constant estimator under weak assumptions. % on the initialization and without any assumptions on the pilot estimator from Section \ref{sec:twostep}. 
Indeed, we don't assume existence of $\hat O,\hat E$ but only existence of some one- and two-dimensional marginal estimators $\hat O_k,\hat O_{k,j},\hat E_k, \hat E_{k,j}$, $j,k=0,\dots,d$, which is satisfied under the conditions illustrated below. %We want to emphasize again that the two-step local constant estimator is identical to the local constant estimator derived via direct minimisation  in equation  \eqref{eq:criterion:LC}, which is explained in  Appendix \ref{appendix:projection}%that will be introduced later in Section \ref{sec:direct}. 
%After introducing the direct estimators in the next section, we will prove the asymptotic behavior of the local linear estimator and we will need to use the representation as a direct minimisation to ensure that the local linear estimator is well defined for high dimension $d\gg 2$. 

The following conditions are sufficient to derive asymptotic normality of  % the unstructured local constant pilot estimator $\hat \alpha^{LC}(x) = \hat O^{LC}(x) / \hat E^{LC}(x)$ and 
the resulting smooth backfitting estimators $\bar \alpha_j$, $j=0,\dots,d$.  
\begin{description}
\item[A1] The exposure satisfies $\inf_{x\in \mathcal X} E(x)>0$ and its marginals  $E_j$ are differentiable for every $j$. Moreover, the conditional density $f_t$ of $Z$ given $Y(t)=1$ is continuous for every $t\in [0,T]$ and it holds  $\sup_{x\in \mathcal X} f_t(x) < C_f$ for some constant $C_f$.
\item[A2] There exists a function $\gamma \in C^2([0,\mathcal{T}])$ such that it holds $n^{-1} \sum_{i=1}^n Y_i(t) \to \gamma(t)$ in probability as $n\to\infty$ for every $t\in[0,\mathcal{T}]$. 
\item[A3] The function $k$ is a second order kernel, that is it satisfies $\int k(u)\mathrm du=1$, $\int uk(u) \mathrm du=0$. Furthermore, $k$ is a symmetric and Lipschitz continuous function with support $[-1,1]$. 
\item[A4] It holds  $n^{1/5}h\rightarrow c_h$ for a constant $0 < c_h< \infty$ as $n\rightarrow \infty$.
\item[A5] The hazard $\alpha$ is two times continuously differentiable in every component of  $x\in \mathcal X$. % and $\inf_{x\in \mathcal X} \alpha(x) >0$. 
%\begin{center} \textbf{necessary? Yes(?) } \end{center}
%\begin{center} \textbf{D6: Lipschitz continuity of $\alpha$ necessary?} \end{center}
\end{description}
Note that in our notation $\gamma(t)$ from A2 and $E_0(t)$ are almost surely identical. However, the definition of $E_0$ does not assure $E_0\in C^2([0,\mathcal{T}])$ without A2.

\begin{theorem}[Local constant smooth backfitting estimator] \label{thm:main:theorem:lc}  
Let $\hat \alpha_j = \hat O_j^{}/\hat E_j^{}$ be the pilot estimator for $j=0,\dots,d$. 
Under Assumptions A1--A5, with probability tending to 1, there exists a unique solution $\{\bar \alpha^*, \bar \alpha_j: j=0,\dots,d\}$ to \eqref{eq:criterion:LC}, and the backfitting algorithm converges to it:
$$\int\left[\bar{\alpha}_j^{[r]}\left(x_j\right)-\bar{\alpha}_j\left(x_j\right)\right]^2 E_j\left(x_j\right) \mathrm{d} x_j \rightarrow 0.$$ 
For $x_0\in(0,\mathcal{T})$ and $x_l\in(0,R)$, $l=1,\dots,d$, the solution satisfies 
\[  % components of hazard
n^{2/5}\left\{ \left(\begin{matrix} \bar \alpha_0(x_0) - \alpha_0(x_0) \\ \vdots \\ \bar \alpha_d(x_d) - \alpha_d(x_d) \end{matrix}\right) \right\} \to  \mathcal N \left( \left(\begin{matrix} c_h^2 b_0(x_0) \\ \vdots \\  c_h^2 b_d(x_d)  \end{matrix}\right), \left(\begin{matrix} v_0(x_0) & 0 & \cdots & 0 \\ 0 & \ddots && \vdots \\ \vdots &&\ddots & 0 \\ 0 & \cdots & 0 & v_d(x_d)\end{matrix}\right) \right), 
\]
and in particular $\bar \alpha(x) =\bar\alpha^* + \sum_{j=0}^d \bar \alpha_j $ with $\bar\alpha^*$ from equation \eqref{eq:alphastar:simple} 
satisfies
\[  % total hazard
n^{2/5}\left\{ \bar \alpha(x) - \alpha (x) \right\} \to  \mathcal N \left(c_h^2 \sum_{j=0}^d b_j(x_j), \sum_{j=0}^d v_j(x_j) \right), 
\]
for $n\to\infty$, where
\begin{align*}
%b_j(x_j) &=  \int u^2 k(u) \mathrm du   \left[ \alpha_j'(x_j) \frac{\partial \log E(x)}{\partial x_j }  + \frac 1 2 \alpha_j''(x_j) \right],  \\  
v_j(x_j) &= c_h^{-1}  \int k(u)^2 \mathrm du  \, \sigma_j^2(x_j) E_j(x_j)^{-1}, \\
\sigma_j^2(x_j) &=   \alpha^*  E_j(x_j)^{-1}+ \sum_{l \neq j}  \int \alpha_l(u) E_{jl}(x_j,u) E_j(x_j)^{-1} \mathrm d u + \alpha_j(x_j)  . 
\end{align*} 
and where $b_j$ is given through
\[
(b_0, b_1, \dots,b_d) = \argmin_{\mathcal B} \int \left[ \beta(x) -\beta_0 - \beta_1(x_1) - \dots - \beta_d(x_d) \right]^2 E(x) \mathrm d x,
\]
for 
\[
\beta(x) = \sum_{j=0}^d \int u^2 k(u) \mathrm du   \left[ \alpha_j'(x_j) \frac{\partial \log E(x)}{\partial x_j }  + \frac 1 2 \alpha_j''(x_j) \right],  \\  
\]
%$\beta$ 
 and $\mathcal B = \{\tilde\beta=(\beta_0,\beta_1,\dots,\beta_d) : \int \beta_j(x_j)E_j(x_j)\mathrm dx_j = 0 ; j=0,\dots,d\}$.  

%\begin{center} \textbf{double check equations for $\sigma_j, v_j$!!!} \end{center}
%\dots``we should get oracle efficiency for the local linear estimator with the bias being additive''\dots
%Note that the correction terms $\int k_h(x_j,u) \mathrm du$ only appear at the boundaries and equal unity in the interior of the support. 
\end{theorem}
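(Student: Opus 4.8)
The plan is to follow the projection-and-linearisation strategy of \cite{Mammen:etal:99}, transplanted to the counting-process setting by replacing i.i.d.\ sums with the martingale decomposition of the $N_i$ under Aalen's multiplicative intensity model. The first step is to rewrite the backfitting system \eqref{eq:LC:backfiting:equation} in operator form on the Hilbert space of additive tuples $(\alpha_0,\dots,\alpha_d)$ equipped with the empirical inner product induced by the marginals $\hat E_k$: the system reads $(\mathrm{Id}+\widehat{\mathcal T})\bar\alpha=\hat\alpha^{\mathrm{pilot}}-\bar\alpha^*\mathbf 1$, where $\widehat{\mathcal T}$ has vanishing diagonal blocks and off-diagonal block $(k,j)$ equal to the integral operator $g\mapsto\int g(x_j)\,\hat E_{k,j}(x_k,x_j)/\hat E_k(x_k)\,\mathrm dx_j$, and where the iteration \eqref{eq:backfit:LC} together with the centring \eqref{eq:cond} is exactly the associated Gauss--Seidel scheme. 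The whole theorem is then read off from a careful analysis of $(\mathrm{Id}+\widehat{\mathcal T})^{-1}$ applied to a stochastic expansion of the one-dimensional pilots $\hat\alpha_k=\hat O_k^{LC}/\hat E_k^{LC}$.

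The second step is existence, uniqueness and convergence of the algorithm. At the population level the operator $\mathcal T$ with kernels $E_{k,j}/E_k$ is the off-diagonal part of the $L^2(E)$-orthogonal projection onto additive functions; Assumption A1 ($\inf_{\mathcal X}E>0$ together with $\sup f_t<C_f$) guarantees that this projection is well defined and, crucially, that the corresponding Gauss--Seidel operator is a strict contraction with a rate bounded away from $1$ --- this is the counting-process analogue of the density-boundedness argument in \cite{Mammen:etal:99}. Uniform consistency $\hat E_k\to E_k$ and $\hat E_{k,j}\to E_{k,j}$ (standard kernel and law-of-large-numbers estimates under A1--A3 and A2) then transfers the contraction to $\widehat{\mathcal T}$ with probability tending to $1$, which yields a unique solution of \eqref{eq:criterion:LC} --- pinned down by \eqref{eq:cond} --- and geometric convergence of the backfitting iteration to it.

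The third step is the stochastic expansion. Writing $N_i=\int\lambda_i+M_i$ with $M_i$ the associated $\mathcal F_{t}$-martingale, decompose $\hat O_k=\hat O_k^{\det}+\hat O_k^{\mathrm{stoch}}$ with $\hat O_k^{\mathrm{stoch}}(x_k)=\tfrac1n\sum_i\int k_h(x_k,X_{ik}(s))\,\mathrm dM_i(s)$. A Taylor expansion of $\hat O_k^{\det}$ and $\hat E_k$ using the second-order kernel A3 and $\alpha\in C^2$ (A5) gives, uniformly, $\hat\alpha_k(x_k)=\alpha^*+\alpha_k(x_k)+\sum_{j\ne k}\int\alpha_j(x_j)\,E_{k,j}(x_k,x_j)/E_k(x_k)\,\mathrm dx_j+h^2[\text{bias}_k(x_k)]+\hat\alpha_k^{\mathrm{stoch}}(x_k)+o_p(\cdot)$. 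Feeding this through $(\mathrm{Id}+\widehat{\mathcal T})^{-1}$ and replacing $\widehat{\mathcal T}$ by $\mathcal T$ up to negligible error, the deterministic (bias) part of $\bar\alpha_k-\alpha_k$ solves the population normal equations whose right-hand side is the sum of the pointwise local-constant hazard biases $\beta(x)=\sum_j\int u^2k(u)\,\mathrm du\,[\alpha_j'(x_j)\,\partial_{x_j}\log E(x)+\tfrac12\alpha_j''(x_j)]$; this is precisely the $L^2(E)$-projection characterisation of $(b_0,\dots,b_d)$ in the statement, so after rescaling the bias is $c_h^2 b_k(x_k)$. For the stochastic part, the key observation --- as for smooth backfitting in regression --- is that whenever $\mathcal T$ \emph{integrates} a stochastic pilot term the $h^{-1}$ blow-up cancels, leaving an $O_p(n^{-1/2})=o_p((nh)^{-1/2})$ contribution; similarly $\bar\alpha^*-\alpha^*=O_p(n^{-1/2})$ by \eqref{eq:alphastar:simple}. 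Hence $\bar\alpha_k^{\mathrm{stoch}}(x_k)=\hat\alpha_k^{\mathrm{stoch}}(x_k)+o_p((nh)^{-1/2})$, and the Rebolledo martingale central limit theorem \citep[Thm.~II.5.1]{Andersen:etal:93} applies: a change-of-variables computation shows the predictable variation of $\sqrt{nh}\,\hat\alpha_k^{\mathrm{stoch}}(x_k)$ converges to $v_k(x_k)$ with $\sigma_k^2$ as defined, and the Lindeberg condition follows from the bounded, compactly supported kernel in A3. Since for $j\ne k$ the cross predictable variation of $\hat\alpha_k^{\mathrm{stoch}}(x_k)$ and $\hat\alpha_j^{\mathrm{stoch}}(x_j)$ is only $O(n^{-1})$, the limiting covariance matrix is diagonal; summing the asymptotically independent components and discarding the $\bar\alpha^*$ term gives the statement for $\bar\alpha(x)$.

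The main obstacle I anticipate is functional-analytic rather than probabilistic, and it has two faces: (i) proving that the empirical operator $\widehat{\mathcal T}$ is a contraction uniformly with probability tending to $1$ --- this is where A1 is indispensable and where care is needed because $\widehat{\mathcal T}$ acts on an infinite-dimensional space and must be controlled in operator norm, not merely pointwise; and (ii) showing that every linearisation remainder --- from the Taylor expansions, from replacing $\hat E_{k,j}/\hat E_k$ by $E_{k,j}/E_k$ inside the operator, and from the iterated application of $(\mathrm{Id}+\widehat{\mathcal T})^{-1}$ --- is uniformly $o_p((nh)^{-1/2})$, so that the leading-order bias/variance decomposition is legitimate. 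By contrast, the martingale central limit step is essentially routine given Aalen's multiplicative intensity model and the usual conditions on the filtration, so the filtering patterns (left-truncation, right-censoring) add no genuine difficulty beyond bookkeeping in the definition of $Y_i$.
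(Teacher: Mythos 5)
Your proposal is correct and follows essentially the same route as the paper: the paper formalises your operator/contraction and projection arguments by restating the smooth-backfitting propositions of Mammen, Linton and Nielsen (adapted to the counting-process setting) and then verifying their conditions B1--B9 under A1--A5, which is exactly your programme of (i) empirical contraction of the backfitting operator via uniform consistency of $\hat E_k,\hat E_{k,j}$, (ii) the martingale decomposition with the Ramlau-Hansen/Rebolledo CLT and the $O_P(n^{-1/2})$ cancellation of integrated stochastic pilot terms, and (iii) identification of the bias as the $L^2(E)$-projection of $\beta(x)$. The only cosmetic difference is that you argue the machinery directly whereas the paper imports it as black-box propositions and concentrates on checking the conditions.
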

The proof of Theorem \ref{thm:main:theorem:lc} is given in Appendix \ref{sec:theory:lc}.

\begin{remark}
Define the martingale
$M_i=N_i-\Lambda_i$ where $\Lambda_i$ is the compensator of $N_i$.
The term $ \int k(u)^2 \mathrm du \, \sigma_j^2(x_j) E_j(x_j)$ occurs as the asymptotic variance of the martingale $ \int k_h(x_j,X_{ij}(s))  \mathrm d M_i(s)$. 
The convergence rate is the same as for a one-dimensional local constant hazard estimator,
see e.g. \cite{Nielsen:Tanggaard:01}.
% If exposure the $Y$, survival time and all covariates are independent, it holds in particular $ \sigma_j(x_j)= \alpha^* +\alpha_j(x_j) $.
%Under the additional technical assumption $E_{jl}(x_j,x_l) =  E_j(x_j) E_l(x_l)$, one would get  $ \sigma_j(x_j)= \alpha^* +\alpha_j(x_j) $.
In the nonparametric regression setting $Y=m(X)+\varepsilon$ of \cite{Mammen:etal:99}, and in contrast to our hazard estimator, the asymptotic variance under certain regularity conditions is specified through $\sigma_j^2(x_j) = \Var(Y-m(X) | X_j = x_j)$ without any closed form expression. %It fulfills the same role as the variance of $ \int k_h(x_j,X_{ij}(s))  \mathrm d M_i(s)$ in our setting. 
%Moreover, $\beta$ is the bias of the full-dimensional estimator. 
\end{remark}
%\begin{center} \textbf{double check remark! } \end{center}

\begin{remark}\label{rem:alpha:star1}
By Lemma \ref{lem:alpha:star:parametric:rate:LC} in the appendix, $\tilde\alpha^{*}$  is an unbiased estimator of $\alpha^*$ if the identification conditions $\int \alpha_j(x_j) E_j(x_j) \mathrm dx_j=0$ hold for $j=0,\dots,d$. 
\end{remark}

\subsection{The local linear smooth backfitting additive kernel hazard estimator} \label{sec:direct}
%\begin{center}  \textbf{use indices $j,k,l$ consistently!!! }  \end{center}

%We describe the direct projection estimator for the local linear case. % only. The local constant case is analogue. 

The local linear smooth backfitting estimator $\tilde  \alpha_j(x_j)$  for $j=0,\dots, d$, can be described  by the minimisation in equation \eqref{eq:criterion:LL}. As described in Section \ref{subsec:introduce:estimators}, this is equivalent to the minimisation in \eqref{eq:criterion} for $p=1$  with respect to $(\hat \alpha, \hat \alpha^{(1)})$ under the constraints $\theta_0(x) = \hat\alpha ^* + \sum_{j=0}^d \hat\alpha_j(x_j)$, $ \theta_{1,j}(x_j)= \hat\alpha^{(1)}_j(x_j)$ for a certain  weighting function $w$. 

Denoting the estimator of derivatives $\alpha'_j$ by $\tilde \alpha^j$ in the following, the first order conditions for the minimisation in $\tilde \alpha_j(x_j) +\tilde\alpha^*$ and $\tilde \alpha^j(x_j)$ can be written as 
\begin{align}
%%\begin{split}
%\tilde \alpha_j(x_j) \hat V_{0,0}^j(x_j) +\tilde \alpha^j(x_j) \hat V_{j,0}^j(x_j) = & \frac 1 n \sum_{i=1}^n \int \int  K_h(x,X_i(s))  \mathrm  dN_i(s) \, \mathrm dx_{-j} \notag  \\
%& - \tilde \alpha^* \hat V_{0,0}^j(x_j) - \sum_{l\neq j} \int \tilde \alpha_l(x_l) \hat V_{0,0}^{l,j} (x_l,x_j) \mathrm dx_l    \label{eq:direct1} \\
%&- \sum_{l\neq j} \int \tilde \alpha^l(x_l) \hat V_{l,0}^{l,j}(x_l,x_j) \mathrm dx_l  \notag \\
[\tilde \alpha_j(x_j)+\tilde\alpha^*] \hat V_{}^j(x_j) +\tilde \alpha^j(x_j) \hat V_{j}^j(x_j) = & \frac 1 n \sum_{i=1}^n \int k_h(x_j,X_{ij}(s)) \mathrm  dN_i(s)  \notag  \\
&- \sum_{l\neq j} \int \tilde \alpha_l(x_l) \hat V_{}^{l,j} (x_l,x_j) \mathrm dx_l    \label{eq:direct1} \\
&- \sum_{l\neq j} \int \tilde \alpha^l(x_l) \hat V_{l}^{l,j}(x_l,x_j) \mathrm dx_l  \notag \\
%%\end{split}
%%\begin{split}
%\tilde \alpha_j(x_j) \hat V_{j,0}^j(x_j) +\tilde \alpha^j(x_j) \hat V_{j,j}^j(x_j) = & \frac 1 n \sum_{i=1}^n \int \int \left( \frac{x_j - X_{i,j}(s)}{h} \right) K_h(x,X_i(s))  \mathrm  dN_i(s) \, \mathrm dx_{-j},  \notag \\ 
%&- \tilde \alpha^* \hat V_{j,0}^j(x_j) - \sum_{l\neq j} \int \tilde \alpha_l(x_l) \hat V_{0,j}^{l,j} (x_l,x_j) \mathrm dx_l \label{eq:direct2}   \\
%&- \sum_{l\neq j} \int \tilde \alpha^l(x_l) \hat V_{l,j}^{l,j}(x_l,x_j) \mathrm dx_l,   \notag 
[\tilde \alpha_j(x_j) +\tilde\alpha^*]\hat V_{j}^j(x_j) +\tilde \alpha^j(x_j) \hat V_{j,j}^j(x_j) = & \frac 1 n \sum_{i=1}^n \int  \left( \frac{x_j - X_{i,j}(s)}{h} \right) k_h(x_j,X_{ij}(s))  \mathrm  dN_i(s),  \notag \\ 
& - \sum_{l\neq j} \int \tilde \alpha_l(x_l) \hat V_{j}^{l,j} (x_l,x_j) \mathrm dx_l \label{eq:direct2}   \\
&- \sum_{l\neq j} \int \tilde \alpha^l(x_l) \hat V_{l,j}^{l,j}(x_l,x_j) \mathrm dx_l,   \notag 
&%\end{split}
\end{align}
with the new  notation 
\begin{align}
\hat V_{}^j(x_j) %&= \frac 1 n \sum_{i=1}^n \int \int K_h(x,X_i(s))Y_i(s) \mathrm ds \, \mathrm dx_{-j} ,\\
&= \frac 1 n \sum_{i=1}^n \int k_h(x_j,X_{ij}(s))Y_i(s) \mathrm ds \label{eq:V:hat:first},\\
\hat V_{}^{l,j}(x_l,x_j) %&= \frac 1 n \sum_{i=1}^n \int \int K_h(x,X_i(s))Y_i(s) \mathrm ds \, \mathrm dx_{-(l,j)} , \\
&= \frac 1 n \sum_{i=1}^n \int   k_h(x_l,X_{il}(s))  k_h(x_j,X_{ij}(s))Y_i(s) \mathrm ds, \notag \\
\hat V_{j}^j(x_j) %&= \frac 1 n \sum_{i=1}^n \int \int \left( \frac{x_j - X_{i,j}(s)}{h} \right) K_h(x,X_i(s))Y_i(s) \mathrm ds \, \mathrm dx_{-j} ,\\
 &= \frac 1 n \sum_{i=1}^n \int  \left( \frac{x_j - X_{i,j}(s)}{h} \right)  k_h(x_j,X_{ij}(s))Y_i(s) \mathrm ds  ,\notag  \\
\hat V_{l}^{l,j}(x_l,x_j) %&= \frac 1 n \sum_{i=1}^n \int \int \left( \frac{x_l - X_{i,l}(s)}{h} \right) K_h(x,X_i(s))Y_i(s) \mathrm ds \, \mathrm dx_{-(l,j)} ,\\
&= \frac 1 n \sum_{i=1}^n \int  \left( \frac{x_l - X_{i,l}(s)}{h} \right)k_h(x_l,X_{il}(s))  k_h(x_j,X_{ij}(s))Y_i(s) \mathrm ds \notag ,\\
\hat V_{j}^{l,j}(x_l,x_j) %&= \frac 1 n \sum_{i=1}^n \int \int \left( \frac{x_j - X_{i,j}(s)}{h} \right) K_h(x,X_i(s))Y_i(s) \mathrm ds \, \mathrm dx_{-(l,j)},  \\
&= \frac 1 n \sum_{i=1}^n \int  \left( \frac{x_j - X_{i,j}(s)}{h} \right) k_h(x_l,X_{il}(s))  k_h(x_j,X_{ij}(s))Y_i(s) \mathrm ds , \notag  \\
\hat V_{j,j}^j(x_j) %&= \frac 1 n \sum_{i=1}^n \int \int\left( \frac{x_j - X_{i,j}(s)}{h} \right)^2  K_h(x,X_i(s))Y_i(s) \mathrm ds \, \mathrm dx_{-j},  \\
&= \frac 1 n \sum_{i=1}^n  \int \left( \frac{x_j - X_{i,j}(s)}{h} \right)^2 k_h(x_j,X_{ij}(s))Y_i(s) \mathrm ds, \notag  \\
\hat V_{l,j}^{l,j}(x_l,x_j) %&= \frac 1 n \sum_{i=1}^n \int \int\left( \frac{x_l - X_{i,l}(s)}{h} \right)\left( \frac{x_j - X_{i,j}(s)}{h} \right) K_h(x,X_i(s))Y_i(s) \mathrm ds \, \mathrm dx_{-(l,j)} \\
 &= \frac 1 n \sum_{i=1}^n \int \left( \frac{x_l - X_{i,l}(s)}{h} \right)\left( \frac{x_j - X_{i,j}(s)}{h} \right) k_h(x_l,X_{il}(s))  k_h(x_j,X_{ij}(s))Y_i(s) \mathrm ds. \label{eq:V:hat:last}
\end{align}
Here, $x_{-k}$ denotes $(x_0,\dotsc,x_{k-1},x_{k+1},\dotsc,x_d)$ and $\mathcal X_{x_k}$ denotes the set $\{ (x'_0,\dotsc,x'_d) \in \mathcal X : x'_k = x_k\}$.

Note that $\hat V_{}^j(x_j)$ and $\hat V_{}^{l,j}(x_l,x_j) $ are identical to the one- and two-dimensional local constant fits $\hat E_j(x_j)$ and $\hat E_{j,k}(x_j,x_k)$ from the local constant estimator. For simplicity of notation, we relabel them in the sequel. The terms $\hat V_{j}^j(x_j)$, $\hat V_{l}^{l,j}(x_l,x_j)$, $\hat V_{j}^{l,j}(x_l,x_j)$, $\hat V_{j,j}^j(x_j) $ and $\hat V_{l,j}^{l,j}(x_l,x_j)$ contain linear and  quadratic components, which distinguish this approach from the one in the last section. 

Furthermore, for $j=0,\dots,d$  we introduce the same identification condition as equation \eqref{eq:cond} in the local constant case and require 
\begin{equation}\label{eq:norming}
\int \tilde \alpha_j(x_j) \hat V_{}^j(x_j) \mathrm dx_j =0
\end{equation}
 to get a unique solution of \eqref{eq:direct1} and \eqref{eq:direct2}. 
%
%Due to the norming of the kernel function $K$ and the norming of $\tilde\alpha_j$ in equation \eqref{eq:norming}, the first order condition for the constant component $\alpha^*$ results in 
%\begin{equation*} %\label{eq:alphastar:LL}
% \tilde \alpha^{*} = %\frac { \sum_{i=1}^n  \int \mathrm dN_i(s) } {\sum_{i=1}^n \int Y_i(s) \mathrm  ds}  - \sum_{j=0}^d \int \tilde \alpha_j(x_j) \hat V^j_{0,0}(x_j) \mathrm dx_j - \sum_{j=0}^d \int \tilde \alpha^j(x_j) \hat V^j_{j,0}(x_j) \mathrm dx_j 
%\frac { \sum_{i=1}^n  \int \mathrm dN_i(s) } {\sum_{i=1}^n \int Y_i(s) \mathrm  ds}  - \sum_{j=0}^d \left(\int  \hat V^j_{j,0}(x_j) \mathrm dx_j \right)^{-1} \int \tilde \alpha^j(x_j) \hat V^j_{j,0}(x_j) \mathrm dx_j 
%\end{equation*}
%The term $ \tilde \alpha^{*}$ equals the estimator $ \tilde \alpha^{*}$ from the local constant case plus a term that arises because the derivative estimators $\tilde\alpha^j$ are not normalized.
%We ignore $ \tilde \alpha^{*}$ for now After convergence of the components, $\alpha^*$ is canceled off by additional bias terms in Theorem \ref{thm:main:theorem:ll}. 

 %With the additional restriction $\int \tilde \alpha^j(x_j) \hat V^j_{j,0}(x_j) \mathrm dx_j =0$, we would have $  \tilde \alpha^{*} = \bar \alpha^{*}$. 
%That second term depends on the estimates themselves and, thus, is not problematic in the asymptotic analysis. 
%
We can derive a local constant estimator from the same conditions \eqref{eq:direct1} and \eqref{eq:direct2} for $ \hat \alpha_k(x_k)$ but with $\hat\alpha'_j(x_j)$ set to zero for every $j$. If we choose $w\equiv 1$, this local constant estimator coincides with the one from Section \ref{sec:loc:const}.  \\

%  Now same argumentation as equations (32)--(43) in \cite{Mammen:etal:99} with  
% In particular can we get exactly the same equations (32)--(38), (41)--(43) with these definitions. Indeed, equations \eqref{eq:direct1} and \eqref{eq:direct2} are identical to equations (32) and (33) in \cite{Mammen:etal:99}. %,i.e. to  \[ \hat \alpha_j(x_j) = \hat \alpha_j(x_j) - .\]
%The equivalents to equations (39) and (40), respectively, are
%\begin{align}\label{eq:1dimLLfit1}
%\hat \alpha_j(x_j) \hat V_{0,0}^j(x_j) + \hat \alpha^j (x_j) \hat V_{j,0}^j (x_j) &= \frac 1 n \sum_{i=1}^n \int \int  K_h(x-X_i(s))  W(x)\mathrm  dN_i(s)\mathrm dx_{-j}, \\
%\hat \alpha_j(x_j) \hat V_{j,0}^j(x_j) + \hat \alpha^j (x_j) \hat V_{j,j}^j (x_j)  &= \frac 1 n \sum_{i=1}^n \int \int \left( \frac{x_j - X_{i,j}(s)}{h} \right) K_h(x-X_i(s))  W(x)\mathrm  dN_i(s)\mathrm %dx_{-j}, \label{eq:1dimLLfit2}
%\end{align} 
%Equations (41)--(43) from \cite{Mammen:etal:99} can be taken over identically. 

Conditions \eqref{eq:direct1}--\eqref{eq:norming} uniquely define our estimator and for the derivation of asymptotic theory \eqref{eq:direct1}--\eqref{eq:direct2}  can be written in one equation as
\begin{align}
\begin{split}
\hat M_j(x_j)  \label{eq:general:case:1}
 \left( \begin{matrix}  \tilde \alpha_j(x_j) - \hat \alpha_j(x_j)\\ \tilde \alpha^j(x_j) -\hat\alpha^j(x_j)\end{matrix} \right)
=%&\frac 1 n \sum_{i=1}^n \int \left( \begin{matrix} 1 \\ 
%h^{-1} (x_j - X_{ij}(s))\end{matrix}\right)  k_h(x_j,X_{ij}(s))   \mathrm  dN_i(s)   \\ 
-\tilde \alpha^*  \left(\begin{matrix} \hat V_{}^j (x_j) \\  \hat V_{j}^j (x_j) \end{matrix}\right)
 - \sum_{l\neq j} \int \hat S_{l,j}(x_l, x_j)  \left(\begin{matrix} \tilde \alpha_l(x_l) \\ \tilde \alpha^l(x_l) \end{matrix}\right)\mathrm dx_l, 
 \end{split}
% \int \tilde \alpha_j(x_j) \hat V_{0,0}^j (x_j) \mathrm dx_j = &0, \label{eq:general:case:2}
\end{align}
where we have used the matrices
\begin{equation}\label{eq:hatM}
\hat M_j(x_j) = \left(\begin{matrix} \hat V_{}^j (x_j)& \hat V_{j}^j (x_j) \\ 
\hat V_{j}^j (x_j) &\hat V_{j,j}^j (x_j)  
\end{matrix}\right),
\end{equation}
\begin{equation}\label{eq:hatS}
\hat S_{l,j}(x_l,x_j) = \left(\begin{matrix} \hat V_{}^{l,j} (x_l,x_j)& \hat V_{l}^{l,j}(x_l,x_j) \\ 
\hat V_{j}^{l,j} (x_l,x_j) &\hat V_{l,j}^{l,j}(x_l,x_j)
\end{matrix}\right), 
\end{equation}
and the one-dimensional local linear fit of the observations
\[
\left( \begin{matrix} \hat \alpha_j(x_j)  \\ \hat \alpha^j (x_j)   \end{matrix} \right) =   \frac 1 n \sum_{i=1}^n \int \hat M_j(x_j)^{-1} \left( \begin{matrix} 1 \\ h^{-1} (x_j - X_{ij}(s))\end{matrix}\right)  k_h(x_j,X_{ij}(s))   \mathrm  dN_i(s). 
\]
Note, that we would get the same asymptotic result for any estimator which arises from equation \eqref{eq:general:case:1} by replacing $\hat V^j_{0,0}, \hat V^j_{0,0}$ and $(\hat \alpha_j, \hat \alpha^j)$ with asymptotically equivalent estimators that satisfy the same regularity conditions in Appendix \ref{sec:theory:ll}.

%We now introduce a specific initialization $(\tilde m_0,\tilde m^{[0]}_l, \tilde m^{[0],l})$, $ l=1,\dots,d$.  We set $\tilde m_0 = 0$ and $(\tilde m^{[0]}_l, \tilde m^{[0],l})=(\hat m_l, \hat m^l)$ to the one-dimensional local linear fit of the data:
%\begin{align} \label{eq:init1}
%\hat \alpha_j(x_j) = &\left\{  (\hat V_{j,0}^j(x_j))^2 - \hat V_{j,j}^j(x_j)\hat V_{0,0}^j (x_j)\right\}^{-1}  \left[\hat V_{j,0}^j(x_j)  \hat U_j^j(x_j)  - \hat V_{j,j}^j(x_j) \hat U_0^j(x_j) \right], \\
%\hat \alpha^j (x_j) = &\left\{  (\hat V_{j,0}^j(x_j))^2 - \hat V_{j,j}^j(x_j)\hat V_{0,0}^j (x_j)\right\}^{-1}  \left[\hat V_{j,0}^j(x_j)  \hat U_0^j(x_j) - \hat V_{0,0}^j(x_j)  \hat U_j^j(x_j)  \right], \label{eq:init2}
%\end{align}
%which results as the solution for equations \eqref{eq:1dimLLfit1} and \eqref{eq:1dimLLfit2} with the definitions

For the implementation as an iterative algorithm, step $r+1$ of the backfitting algorithm is given by:
\begin{align}\label{eq:backfitalgo1}
 \left(\begin{matrix} \hat m_j(x_j)  \\ \tilde \alpha^{[r +1],j}(x_j) \end{matrix}\right)   &=  \left(\begin{matrix} \hat \alpha_j(x_j) \\ \hat \alpha^j(x_j) \end{matrix}\right) -   \hat M_j(x_j)^{-1} \sum_{l \neq j} \int \hat S_{l,j}(x_l, x_j)  \left(\begin{matrix} \tilde \alpha_l^{[r]}(x_l) \\ \tilde \alpha^{[r],l}(x_l) \end{matrix}\right) \mathrm dx_l, \\
\tilde \alpha_j^{[r+1]}(x_j) &= \hat m_j(x_j) - \left(  \int \hat V_{}^j(u_j) \mathrm du_j\right)^{-1}  \int \hat m_j(u_j) \hat V_{}^j(u_j) \mathrm du_j , \label{eq:backfitalgo2}
\end{align}
for $r=0,1,2,\dots$.

Note that $\tilde \alpha^*$ from equation \eqref{eq:general:case:1} vanishes in the component  $\alpha^{[r +1],j}(x_j) $ and it is made redundant in the other component by the norming condition \eqref{eq:backfitalgo2}.
Theorem \ref{thm:main:theorem:ll} assures the convergence of this estimator. 
%
%Regularity of the marginal components matrix $\hat M_j(x_j)$ (of dimensions $2\times 2$) for every $j=1,\dots,d$ and every $x_j$ is a much weaker property than invertibility of the matrix $D(x)$  (of dimensions $(d+1)\times (d+1)$) for every $x$ which would have been necessary for the existence of a local linear pilot estimator in Section \ref{sec:unstructured}. 
%Especially for high dimensions ($d>2$) this existence can not be assured. 

We recommend avoiding the inverse of the matrices $\hat M_j$ in the implementation for computational stability. Solving equations \eqref{eq:direct1}--\eqref{eq:direct2} for $\tilde \alpha_j(x_j)$ and $\tilde \alpha^j(x_j)$, respectively, and first replacing $\tilde \alpha^j(x_j)$ in \eqref{eq:direct1} by its latest fit  $\tilde \alpha^{[r],j}(x_j)$ and then $\tilde \alpha_j(x_j)$ in \eqref{eq:direct2} by  $\tilde \alpha_j^{[r+1]}(x_j)$ in step $r+1$, 
we get the asymptotically equivalent, more stable backfitting equations 
\begin{align} 
\begin{split}\label{eq:algo:LL1}
\tilde \alpha^{[r+1]}_j(x_j) &= \hat V^j_{}(x_j)^{-1} \Big(  \hat U^j(x_j)  -\tilde \alpha^{[r],j}(x_j) \hat V^j_{j}(x_j) - \tilde\alpha^*      \hat V^j_{}(x_j),  \\  
& - \sum_{l\neq j} \int \tilde \alpha_l^{[r]}(x_l) \hat V_{}^{l,j} (x_l,x_j) \mathrm dx_l   - \sum_{l\neq j} \int \tilde \alpha^{[r],l}(x_l) \hat V_{l}^{l,j}(x_l,x_j) \mathrm dx_l  \Big), 
\end{split}\\
\begin{split}\label{eq:algo:LL2}
\tilde \alpha^{[r+1],j}(x_j) &=  \hat V_{j,j}^j(x_j)^{-1}   \Big(  \hat U_j^j(x_j)  - \tilde\alpha_j^{[r]}(x_j)\hat V^j_{j}(x_j) - \tilde \alpha^* \hat V_{j}^j(x_j) \\
& - \sum_{l\neq j} \int \tilde \alpha^{r+1]}_l(x_l) \hat V_{j}^{l,j} (x_l,x_j) \mathrm dx_l 
- \sum_{l\neq j} \int \tilde \alpha^{[r],l}(x_l) \hat V_{l,j}^{l,j}(x_l,x_j) \mathrm dx_l \Big),  
\end{split}\end{align} 
for step $r+1$ with the notation 
\begin{align}
\hat U^j(x_j)&= \frac 1 n \sum_{i=1}^n \int  k_h(x_j,X_{ij}(s))  \mathrm  dN_i(s),\label{eq:init3} \\
%\hat U_j^j(x_j) &= \frac 1 n \sum_{i=1}^n \int \int \left( \frac{x_j - X_{ij}(s)}{h} \right) K_h(x,X_i(s))  \mathrm  dN_i(s)\mathrm dx_{-j}.\label{eq:init4}
\hat U_j^j(x_j) &= \frac 1 n \sum_{i=1}^n \int \left( \frac{x_j - X_{ij}(s)}{h} \right)  k_h(x_j,X_{ij}(s))  \mathrm  dN_i(s).\label{eq:init4}
\end{align} 
Note that $\hat U^j(x_j)$ is identical to $\hat O_j(x_j)$, the local constant occurrence estimator described in Section \ref{sec:loc:const}. 
%We set the initialization in step $r=0$ to $(\tilde \alpha^{[0]}_j(x_j), \tilde \alpha^{[0],j}(x_j)) = ( \hat\alpha_j(x_j), \hat\alpha^j(x_j))$.
We set the initialization in step $r=0$ to $(\tilde \alpha^{[0]}_j(x_j), \tilde \alpha^{[0],j}(x_j)) = (0,0)$. 
  %In matrix notation, equations \eqref{eq:init1}--\eqref{eq:init4} can be written as

The complete smooth backfitting algorithm for the local linear additive hazard estimator $\tilde \alpha$ is as follows.
\begin{enumerate}
\item Compute $\hat V_{}^j$, $\hat V_{}^{l,j}$, $\hat V_{j}^j$, $\hat V_{l}^{l,j}$, $\hat V_{j}^{l,j}$, $\hat V_{j,j}^j$, and  $\hat V_{l,j}^{l,j}$  from equations \eqref{eq:V:hat:first}--\eqref{eq:V:hat:last} and set $\hat \alpha(x_k) = {\hat O_k(x_k)}/{\hat E_k(x_k)}$ for $k,j=0,\dots,d$. 
\item Set $r=0$ and $\bar \alpha^{[r]}_k= \hat \alpha_k $ for  $k,j=0,\dots,d$. 
\item For $k=0,\dots,d$, calculate for all points $x_k$ Set $r=1$, compute $\tilde \alpha^{[r+1]}_k(x_k)$ via equations \eqref{eq:algo:LL1} and \eqref{eq:algo:LL2}. Then replace $\tilde \alpha_j^{[r+1]}$ by
\[
\tilde \alpha_j^{[r+1]}- \left(  \int \hat V_{}^j (u_j) \mathrm du_j\right)^{-1}  \int \tilde \alpha^{[r^*]}_j(u_j) \hat V_{}^j(u_j) \mathrm du_j. 
\]
\item If the convergence criterion
\[
\frac{\sum_{k=0}^d \int \left(\tilde \alpha^{[r+1]}_k(x_k) - \tilde \alpha^{[r]}_k(x_k) \right)^2 \mathrm dx_k}{\sum_{k=0}^d \int \left(\tilde \alpha^{[r+1]}_k(x_k) \right)^2 \mathrm dx_k + 0.0001} < 0.0001
\]
is fulfilled, stop; otherwise set $r$ to $r+1$ and go to step 3.
\item After convergence in step $r$, set $\tilde \alpha_k= \tilde \alpha_k^{[r+1]}$ for $k=0,\dots,d,$ and 
 $\tilde \alpha^{*} =  { \sum_{i=1}^n  \int \mathrm dN_i(s) }/{\sum_{i=1}^n \int Y_i(s) \mathrm  ds}$. %from equation  \eqref{eq:alphastar:simple}.
\end{enumerate}

%Note that equations \eqref{eq:hatM}--\eqref{eq:backfitalgo2} describe an algorithm to compute the smooth backfitting estimators of $\alpha_k$, $k=0,\dots,d$, that will converge even for different choices of $\hat V$ and different initialization $\hat \alpha_k$ under certain conditions. 

%We want to remark again, that the unstructured estimator from Section \ref{sec:unstructured} is not defined if the matrix $D$ is not invertible. 

%%Next we derive asymptotics for the smooth backfitting estimator with weak assumptions on the initialization and without any assumptions on the pilot estimator from Section \ref{sec:twostep}. Indeed, we don't assume existence of $\hat O,\hat E$ but only existence of some one- and two-dimensional marginal estimators %$\hat O_k,\hat O_{k,j},\hat E_k, \hat E_{k,j}$, $j,k=0,\dots,d$ 
% which is satisfied under rather weak conditions illustrated below. % This makes the following result more general than one we derived in Section \ref{sec:direct}. 
%%We just assume that some estimates $\hat \alpha_j$, $\hat \alpha^j$, $\hat V_j$ and $\hat V^j$, $j=1,\dots,d$ are given. 

\subsection{Asymptotic properties of the local linear smooth backfitting additive kernel hazard estimator}  \label{sec:result:direct:projection} 
For the asymptotic behavior of $\tilde\alpha_j$, we assume the same Assumptions A1--A5 as for the local constant estimator. 

\begin{theorem}[Local linear smooth backfitting estimator]  \label{thm:main:theorem:ll} 
%Let  $\hat V$ be given from equations \eqref{eq:direct1a}--\eqref{eq:direct1b}. 
%With the initialization \eqref{eq:init1}--\eqref{eq:init4} and 
Under Assumptions A1--A5, with probability tending to 1, there exists a unique solution $\{\tilde \alpha_j, \tilde \alpha^j : j=0,\dots,d\}$ to \eqref{eq:criterion:LL} and the backfitting algorithm \eqref{eq:backfitalgo1} converges to it:
\begin{align*}
&\int\left[\tilde{\alpha}_j^{[r]}\left(x_j\right)-\tilde{\alpha}_j\left(x_j\right)\right]^2 E_j\left(x_j\right) \mathrm{d} x_j \rightarrow 0, \\
&\int\left[\tilde{\alpha}^{j,[r]}\left(x_j\right)-\tilde{\alpha}^j\left(x_j\right)\right]^2 E_j\left(x_j\right) \mathrm{d} x_j \rightarrow 0.
\end{align*}

For $x_0\in(0,\mathcal{T})$ and $x_l\in(0,R)$, $l=1,\dots,d$, the solution satisfies 
\[ 
n^{2/5}\left\{ \left(\begin{matrix} \tilde \alpha_0(x_0) - \alpha_0(x_0)+ \nu_{n,0} \\ \vdots \\ \tilde \alpha_d(x_d) - \alpha_d(x_d) + \nu_{n,d}\end{matrix}\right) \right\} \to  \mathcal N \left( \left(\begin{matrix} c_h^2 b_0(x_0) \\ \vdots \\  c_h^2 b_d(x_d)  \end{matrix}\right), \left(\begin{matrix} v_0(x_0) & 0 & \cdots & 0 \\ 0 & \ddots && \vdots \\ \vdots &&\ddots & 0 \\ 0 & \cdots & 0 & v_d(x_d)\end{matrix}\right) \right), 
\]
for $n\to\infty$, where
\begin{align*}
\nu_{n,j} &= \int \int \alpha_j(x_j) k_h(x_j,u) E_j(u) \mathrm du \, \mathrm dx_j, \\ 
%b_j(x_j)  &= \frac 1 2  \alpha_j''(x_j) \int u^2 k(u) \mathrm du , \\ 
b_j(x_j)  &=\frac 1 2 \int u^2 k(u)\mathrm du \left [ \alpha_j''(x_j) - \int \alpha_j''(x_j) E_j(x_j) \mathrm dx_j\right],  \\
v_j(x_j) &= c_h^{-1} \int k(u)^2 \mathrm du  \, \sigma_j^2(x_j) E_j(x_j)^{-1}, \\
\sigma_j^2(x_j) &=   \alpha^*E_j(x_j)^{-1} + \sum_{l \neq j}  \int \alpha_l(u) E_{jl}(x_j,u) E_j(x_j)^{-1} \mathrm d u + \alpha_j(x_j)  . 
\end{align*}
This result yields in particular 
\[  % total hazard
n^{2/5}\left\{ \tilde \alpha(x) - \alpha (x) \right\} \to  \mathcal N \left(c_h^2 \sum_{j=0}^d b_j(x_j), \sum_{j=0}^d v_j(x_j) \right), 
\]
for $\tilde \alpha(x) =\tilde \alpha^* + \sum_{j=0}^d \tilde \alpha_j(x_j)$ with $ \tilde\alpha^{*} =  { \sum_{i=1}^n  \int \mathrm dN_i(s) }/{\sum_{i=1}^n \int Y_i(s) \mathrm  ds}$. 
%\dots``we should get oracle efficiency for the local linear estimator with the bias being additive''\dots
\end{theorem}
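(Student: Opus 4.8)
The plan is to follow the pattern of the proof of Theorem~\ref{thm:main:theorem:lc}, organised around the single operator identity \eqref{eq:general:case:1}. The first step is to insert the Doob--Meyer decomposition $\mathrm dN_i(s)=\alpha(X_i(s))Y_i(s)\,\mathrm ds+\mathrm dM_i(s)$, with $M_i$ square--integrable martingales by \eqref{eq:aalens:model}, into the one--dimensional local linear fit $(\hat\alpha_j,\hat\alpha^j)$ on the right of \eqref{eq:general:case:1}. This splits $(\hat\alpha_j,\hat\alpha^j)$ into a smooth \emph{signal} part, obtained by replacing $\mathrm dN_i(s)$ with $\alpha(X_i(s))Y_i(s)\,\mathrm ds$, and a \emph{noise} part $\hat M_j(x_j)^{-1}\frac1n\sum_{i=1}^n\int(1,h^{-1}(x_j-X_{ij}(s)))^{T}k_h(x_j,X_{ij}(s))\,\mathrm dM_i(s)$. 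Since \eqref{eq:general:case:1} is linear in $(\tilde\alpha_j,\tilde\alpha^j)$, the solution splits accordingly into a deterministic piece and a stochastic piece, both satisfying the same linear backfitting fixed--point equation and differing only in the inhomogeneity; the bias will come from the deterministic piece and the variance from the stochastic one.

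Second, I would show by the law of large numbers for i.i.d.\ processes together with standard kernel expansions that the random coefficient functions $\hat V^j_{0,0},\hat V^j_{j,0},\hat V^j_{j,j}$ and their two--dimensional counterparts in \eqref{eq:V:hat:first}--\eqref{eq:V:hat:last} converge uniformly in probability to deterministic limits: $\hat V^j_{0,0}(x_j)\to E_j(x_j)$, $\hat V^j_{j,0}(x_j)=O_p(h)$, $\hat V^j_{j,j}(x_j)\to E_j(x_j)\int u^2k(u)\,\mathrm du$, $\hat V^{l,j}_{0,0}\to E_{lj}$, and likewise for the entries of $\hat S_{l,j}$. Hence $\hat M_j(x_j)$ in \eqref{eq:hatM} is invertible with probability tending to one, its determinant converging to $E_j(x_j)^2\int u^2k(u)\,\mathrm du>0$ by A1 and A3, and the empirical recursion \eqref{eq:backfitalgo1}--\eqref{eq:backfitalgo2} converges to the population recursion whose fixed point is the $L_2(E)$--orthogonal projection of the one--dimensional local linear target onto the closed space of $E$--centered additive functions. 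That this limiting operator is a strict contraction on that space --- whence existence and uniqueness of the solution to \eqref{eq:criterion:LL} and geometric convergence of the algorithm, both with probability tending to one --- is the argument of \cite{Mammen:etal:99} reproduced in the proof of Theorem~\ref{thm:main:theorem:lc}, using only $\inf E>0$ and $\sup_x f_t<\infty$ from A1. The one extra point over the local constant case is that the recursion acts on pairs $(\tilde\alpha_j,\tilde\alpha^j)$; since the off--diagonal entries of $\hat M_j$ are $O_p(h)$ the derivative block decouples asymptotically, so the contraction estimate carries over by a perturbation argument.

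Third, for the bias I would Taylor expand $\alpha_l(X_{il}(s))$ around $x_l$ to second order inside the signal part. Because the local linear fit reproduces affine functions exactly, the constant and linear terms are absorbed without error and the leading remainder is the $h^2$ curvature term carrying $\int u^2k(u)\,\mathrm du$; unlike in the local constant case the design term $\alpha_j'\,\partial_{x_j}\log E$ does not survive, which is why $b_j$ reduces to $\tfrac12\int u^2k(u)\,\mathrm du\,[\alpha_j''(x_j)-\int\alpha_j''(x_j)E_j(x_j)\,\mathrm dx_j]$, the centering coming from the smoothed norming \eqref{eq:norming}. The deterministic shift $\nu_{n,j}$ in the statement is precisely the gap between the empirical norming $\int\tilde\alpha_j\hat V^j_{0,0}=0$ and the population norming $\int\alpha_jE_j=0$, which is $O(h^2)$ and must be carried along explicitly. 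For the stochastic part I would apply Rebolledo's martingale central limit theorem to $(nh)^{-1/2}\sum_{i=1}^n\int h^{1/2}k_h(x_j,X_{ij}(s))\,\mathrm dM_i(s)$: its predictable variation is $\frac1n\sum_i\int h\,k_h(x_j,X_{ij}(s))^2\alpha(X_i(s))Y_i(s)\,\mathrm ds$, which converges to $\int k(u)^2\mathrm du\,\sigma_j^2(x_j)E_j(x_j)$ as noted in the remark after Theorem~\ref{thm:main:theorem:lc}, and the Lindeberg condition holds since the jumps are $O((nh)^{-1/2})$. The linear part of the local linear martingale integral is of smaller order $O_p(h^{1/2})$ relative to the constant part, and the contributions propagated through the contractive backfitting map from the other coordinates are negligible, so the $O_p((nh)^{-1/2})$ stochastic part of $\tilde\alpha_j(x_j)$ is asymptotically the own martingale term divided by $\hat V^j_{0,0}(x_j)$; dividing the limiting variance by $E_j(x_j)^2$ and inserting A4 gives $v_j(x_j)$. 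Joint asymptotic normality with block--diagonal covariance follows because the cross predictable covariations $\frac1n\sum_i\int k_h(x_j,\cdot)k_h(x_l,\cdot)\alpha Y\,\mathrm ds$ stay $O_p(1)$ rather than exploding like $h^{-1}$, so distinct coordinates are asymptotically independent. The statement for $\tilde\alpha(x)=\tilde\alpha^*+\sum_{j}\tilde\alpha_j(x_j)$ then follows by summing the componentwise expansions, using that $\tilde\alpha^*$ is $\sqrt n$--consistent for $\alpha^*$ (the local linear analogue of Remark~\ref{rem:alpha:star1}) and hence negligible at rate $n^{2/5}$, and collecting the deterministic terms.

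The step I expect to be the main obstacle is turning the deterministic contraction into a statement about the random system: one must verify that $\hat M_j^{-1}\hat S_{l,j}$, viewed as an operator on pairs of functions, has operator norm bounded away from $1$ uniformly over the relevant class with probability tending to one, \emph{and} handle the boundary behaviour of the renormalised kernel $k_h(\cdot,\cdot)$ from \eqref{eq:boundedkernel}, where the moments of $k$ are distorted and where the uniform convergence of the $\hat V$'s and the control of the Taylor remainders need the most care; this is where the Lipschitz continuity of $k$ in A3 and the differentiability of the marginals in A1 enter. The martingale CLT and the Taylor expansion steps are comparatively routine once this operator picture and the uniform bounds are in place.
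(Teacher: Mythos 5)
Your proposal is correct and follows essentially the same route as the paper: decompose the one-dimensional local linear fits into a martingale (variable) part and a compensator (stable) part, establish uniform convergence of the $\hat V$-quantities and the contraction property of the backfitting operator via the machinery of \cite{Mammen:etal:99} (Propositions \ref{thm:conv:of:backfitting:ll}--\ref{thm:bias:part:ll}), obtain the variance from the Rebolledo/Ramlau-Hansen martingale CLT and the bias from second-order Taylor expansions, with $\nu_{n,j}$ arising from the norming discrepancy and the design bias cancelling because the local linear fit reproduces affine functions. Your write-up in fact supplies more of the detail (the perturbation argument for the paired system, the asymptotic decoupling of the derivative block, the cross-covariation bound giving block-diagonality) than the paper's own proof, which largely defers to the local constant case and simply records the choices of $a_{n,j}$, $a_n^j$, $\beta$, and $\gamma_{n,j}$.
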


The proof of Theorem \ref{thm:main:theorem:ll} is given in Appendix \ref{sec:theory:ll}.

%\begin{center} \textbf{delete/change the following!} \end{center}
\begin{remark}
Note that the convergence rate is the same as for a one-dimensional local linear hazard estimator,
see e.g. \cite{Nielsen:Tanggaard:01}.
Furthermore, $\tilde\alpha_j(x_j)$ estimates $\alpha_j(x_j) - \int \alpha_j(x_j) \hat V^j_{}(x_j)\mathrm dx_j$ instead of $\alpha_j(x_j)$. The terms $\nu_{n,j}$ correct for this shift in the estimation of each component. 
The sum $\sum_{j=0}^d \nu_{n,j}$ vanishes as the additive adjustments cancel each other off. 

%Furthermore, it holds 
%\[
%\alpha^* = \sum_{j=0}^d \int \alpha_j(x_j) E_j(x_j) \mathrm d x_j 
%\]
%because of the identification condition in equation \eqref{eq:identification:theory}. 
%Hence 
%\[
% \sum_{j=0}^d \nu_{n,j} - \alpha^*=  \sum_{j=0}^d   \int \int \alpha_j(x_j) k_h(x_j,u) E_j(u) \mathrm du \, \mathrm dx_j -  \sum_{j=0}^d \int \alpha_j(x_j) E_j(x_j) \mathrm d x_j  
%\]
%vanishes asymptotically. 
%\[
%\nu(x_j) =\frac 12  h^2 \int u^2 k(u) \mathrm d u \int \alpha_j(x_j) \mathrm d x_j = O(h^2),
%\]
%which can be seen through a Taylor expansion of $E_j$ around $x_j$. They are just given for completeness in the first asymptotic statement in Theorem \ref{thm:main:theorem:ll} .
The component $\tilde\alpha^*$ of the estimator $\tilde \alpha$, which estimates $\alpha^*$, is identical to $\bar\alpha^*$ from the local constant case. Its asymptotic behavior is explained in Remark  \ref{rem:alpha:star1}.
\end{remark}

\section{Simulation Study}
%In this section we conduct a simulation study in order to assess finite sample properties of the proposed estimators.
\subsection{Simulation Setting}

We assume that the survival times $T_i$  follows a Gompertz–Makeham distribution, with hazard function given by
\[ 
\alpha(t,Z_i)=\alpha_0(t)+\sum_{k=1}^d \alpha_k(Z_{ik})=e^{0.01 t}+ \frac{4}{\sqrt d}\sum_{k=1}^d(-1)^{k+1} \mathrm{sin}(\pi Z_{ik}),
\]
$(i=1,\dots,n)$.
We add right censoring  with censoring variables $C_i$ that follows the same distribution as $T_i$, except the scale parameter being divided by 1.75.
The factor $4d^{-1/2}$ is chosen so that the distribution of $T_i$ doesn't vary too much in the number of covariates $d$.
Note that for convenience the components are differently identified than 
in \eqref{eq:norming}, \eqref{eq:identification:theory}.
%
%
%We used the following two models:
%\begin{align*}
%&\text{Model 1: } \eta_k(z_k)=\begin{cases}-z_k \quad &\text{if k is odd,}\\ 2z_k \quad &\text{if k is even.}\end{cases}\\
%&\text{Model 2: } \eta_k(z_k)=\begin{cases}2\sin(\pi z_k) \quad &\text{if k is odd,}\\ 2z_k \quad &\text{if k is even.}\end{cases}
%\end{align*}
We now describe how the  covariates $(Z_{i1}, \dots, Z_{id})$ are generated.
We first  simulate $(\widetilde Z_{i1}, \dots, \widetilde Z_{id})$ from a $d$-dimensional  multi-normal distribution with mean equal
0 and $\textrm{Corr}(Z_{ij},Z_{il})=\rho$ if $j\neq l,$ else 1.
Afterwards we set
\[
        Z_{ik}=2.5\pi^{-1}\text{arctan}(\widetilde Z_{ik}).
\]
We repeat the procedure and take the first $i=1,\dots,n$ observations such  that $4d^{-1/2} \sum_{k=1}^d(-1)^{k+1} \mathrm{sin}(\pi Z_{ik})$ is positive.
Technically, the values of the covariates are conditioned such that the resulting hazard is positive, and hence well defined.

%We will consider combinations of the following parameter settings:
%\begin{align*}
%n= 200, 500,  \quad d=2,99  \quad \rho=0,0.5,0.8.
%\end{align*}
%After trying several bandwidths,  if not said otherwise, we present the results for a bandwidth $b=0.3$ for both estimators in every model.

 As kernel function k, we used the Epanechnikov kernel. Performance is measured via the integrated squared error:

\begin{align*}
MISE_k=n^{-1}\sum_{i}\left(\eta_{k}(Z_{ik})-\widehat \eta_{k}(Z_{i_k})\right)^2.
\end{align*}
\subsection{Simulation Results}
We compare the performance of the local linear smooth backfitting estimator to the local constant smooth backfitting estimator.
We also compare these proposed estimators to a version of the classical backfitting equivalent where only the updated component is smoothed, see \cite{buja1989linear}.

Figure \ref{sim_figure} shows the estimation results for the first component  from 100 simulations in a setting with sample size $n=5000$, dimension $d=3$, and correlation $\rho=0.5$, calculated with a MISE optimal bandwidth.
We find that the classical backfitting estimators produce in more noise than their smooth backfitting counterpart.  The local constant smooth backfitting estimator is less smooth (more "wiggly") than the local linear version. 
This first impression can be further verified in Table \ref{sim_table}: Classical backfitting estimators perform significantly worse than the smooth alternatives.
The local linear classical backfitting estimator only gives sensible results in the easiest settings, that is when $n=5000$ and or $d=3$, while breaking down in all other cases.
  Another observation is that that the local linear smooth backfitting estimator is nearly always to be preferred to local constant smooth backfitting estimator.
  Only in the most challenging setting, i.e., $n=500$, $d=30$, did the local constant smooth backfitting estimator outperform the local linear version.
  But even in that case the advantage is only by a small margin.

\begin{figure}[h]
\includegraphics[width=15cm]{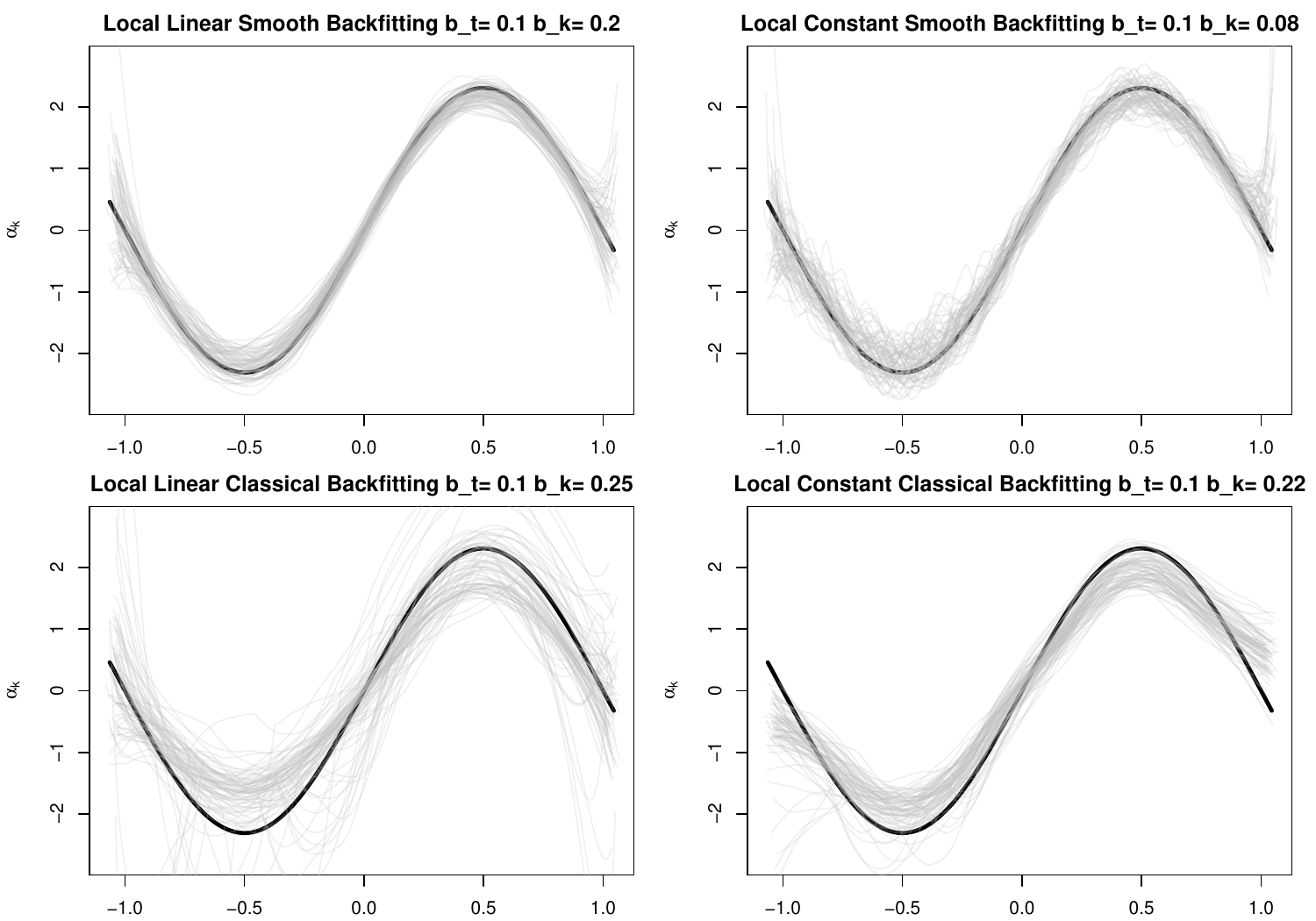}
\caption{Simulation results for $k=1$ comparing four different estimators: local constant smooth backfitting, local linear smooth backfitting, local constant backfitting, local linear backfitting. The grey lines are  represent 100 Monte Carlo simulations with MISE optimal bandwidth estimating the true curve (black).}
\label{sim_figure}
\end{figure}

\begin{table}
\begin{center}

\begin{tabular}{ |c| c c c | c c c |}
\hline
\multicolumn{7}{|c|}{d=3} \\
\hline
 &\multicolumn{3}{|c|}{n=500} & \multicolumn{3}{|c|}{n=5000} \\
&MISE& $\mathrm{Bias^2}$ & Variance &MISE& $\mathrm{Bias^2}$ & Variance  \\ 
\hline
LL-SBF & 0.25& 0.07& 0.17& 0.031& 0.007& 0.024\\  
 LC-SBF &0.30& 0.05&  0.25& 0.051& 0.011& 0.041  \\
LL-BF & 43.14& 0.69& 42.46 & 0.779& 0.041& 0.737\\  
LC-BF &1.44& 0.48& 0.96  & 0.077 &0.020&0.058 \\
\hline 
\multicolumn{7}{|c|}{d=10} \\
\hline
 &\multicolumn{3}{|c|}{n=500} & \multicolumn{3}{|c|}{n=5000} \\
&MISE& $\mathrm{Bias^2}$ & Variance &MISE& $\mathrm{Bias^2}$ & Variance  \\ 
\hline
LL-SBF &  0.22& 0.05& 0.17 &0.020& 0.005& 0.015\\  
 LC-SBF &0.24& 0.08 &0.17 & 0.030&0.006& 0.025\\
LL-BF & 1118.80&10.88& 1107.91&0.135&  0.057& 0.078\\  
LC-BF &1.02& 0.03& 0.99 & 0.031&  0.005&0.026\\
\hline 
\multicolumn{7}{|c|}{d=30} \\
\hline
 &\multicolumn{3}{|c|}{n=500} & \multicolumn{3}{|c|}{n=5000} \\
&MISE& $\mathrm{Bias^2}$ & Variance &MISE& $\mathrm{Bias^2}$ & Variance  \\ 
\hline
LL-SBF &  0.18& 0.03 & 0.15&0.014&0.0007&  0.0133\\  
 LC-SBF &0.16& 0.05&  0.10&0.029&  0.0172&  0.0114  \\
LL-BF &  NA &  NA &  NA & 0.171&0.1494&  0.0217\\
LC-BF & NA &  NA&  NA& 0.033& 0.0227& 0.0105  \\
\hline
\end{tabular}
\end{center}
\caption{Simulation results comparing four different estimators: local constant smooth backfitting, local linear smooth backfitting, local constant backfitting, local linear backfitting. Values are calculated from 500 Monte Carlo simulations with MISE optimal bandwidth.}
\label{sim_table}
\end{table}

\section{Data application: The TRACE study} \label{sec:tracestudy}

The TRACE study group (see e.g. \cite{jensen1997does}) has
collected information on more than 4000 consecutive patients with
acute myocardial infarction (AMI) with the aim of 
studying the prognostic importance of 
ventricular fibrillation (vf) on mortality.
We here consider a subset of these
patients that are available in the \texttt{timereg} R package. 
We furthermore only consider those patients with
more than 40 years of age, and only consider the first five years of follow-up time
after the diagnosis. This results in $n=1799$ observations.
At entry, i.e., time of AMI occurrence, the
patients had various risk factors recorded.
Here, additionally to duration, i.e. time since AMI occcurence, we will consider age at AMI occcurence of the patient, $a_i,$ and wall motion index (heart pumping effect based on ultrasound measurements where 2 is normal and 0 is worst \citep{scheike2009timereg}),
$\textrm{wmi}_i$.
We will ignore additional binary covariates that have been recorded as our framework only covers continuous covariates. With that regard, this section should be seen as a simple illustration of our theoretical work rather than a serious attempt to answer a real-world question.
In summary, we consider the model
\[
\lambda_i(t)=Y_i(t)\{\alpha_0(t)+\alpha_2(a_i)+ \alpha_3(\textrm{wmi}_i)\},
\]
under the identifiability condition $\int \alpha_j(x_j) d{x_j}=0$ for $j=1,2$. 
The initially estimated curve for $\alpha_0$ can be seen in Figure \ref{fig:full}.
\begin{figure}
\include{full.tex}
\caption{
Local linear additive smooth backfitting fit  of $\alpha_0$ on the full data.}
\label{fig:full}
\end{figure}
We find that the duration effect has two distinct periods with an increased risk in the beginning that flattens after approximately three months.
This suggests that it might be beneficial to apply two different amounts of smoothing on those two periods.  We therefore generate two different data sets from our original data set:
The first data set covers the risk in the first three months (this can be achieved by censoring all patients who survived beyond three months) and the second data set covers the risk conditional on surviving the first three months (i.e., omits all patients in the data set with failure or censoring in the first three months). 
The results with our local linear estimator for the two different cohorts, i.e.,  those with
ventricular fibrillation (vf=1) and those without ventricular fibrillation (vf=0) 
can is depicted in Figures \ref{fig:app1} and \ref{fig:app2}.
\begin{figure}
\include{early_add.tex}
\caption{
 Local linear fit of $(\alpha_0,\alpha_1,\alpha_2)$ for the first three months
 for two different strata depending on the value of vf. Dashed line indicates asymptotic 95\% point-wise confidence interval.
}
\label{fig:app1}
\end{figure}

 \begin{figure}
 \include{late_add.tex}
 \caption{
 Local linear fit of $(\alpha_1,\alpha_2,\alpha_3)$ conditional on surviving the first three months
 for two different strata depending on the value of vf. Dashed line indicates asymptotic 95\% point-wise confidence interval.
 }
 \label{fig:app2}
 \end{figure}
% \begin{figure}
% \include{figure1.2.tex}
 %\caption{
 % Local linear fit of $(\alpha_1,\alpha_2,\alpha_3)$
 %for two different strata depending on the value of vf. Dashed line indicates asymptotic point-wise confidence %interval.
 %}
 %\label{fig:app2}
 %\end{figure}

%\begin{figure}
%\include{figure1.3.tex}
%\caption{
%Local constant multiplicative smooth backfitting fit of of $(\alpha_1,\alpha_2,\alpha_3)$
%for four different strata depending on the value of the two binary variables (vf, chf). Dashed line indicates 
%}
%\label{fig:app:mult}
%\end{figure}

The smoothing parameter was chosen manually:
For the cohort with $\textrm{vf}=0$ we have $n=1655$ patients when considering the first three months
and chose the bandwidths for $(t,a,\text{wmi})$ as $(0.1,15,0.8)$; for the data set after surviving the first three months we have
% with an empirical  censoring  probability of $89\%$
$n=1482$ and chose a bandwidth of $(1,15,0.8)$. %with a censoring probability of $68\%$.
For the cohort with $\text{vf}=1$ we have $n=132$ patients for the first three months
and chose a bandwidth of  $(t,a,\text{wmi})$ as $(0.1,20,0.8)$;
%with an empirical  censoring  probability of $56\%$ or
for the data set after surviving the first three months we have
$n=75$ and chose a bandwidth of $(t,a,\text{wmi})$ as $(1,20,0.8)$. %with a censoring probability of $68\%$.
 The dashed lines show a point-wise asymptotic 95\% confidence interval based on Theorem \ref{thm:main:theorem:ll}.
Note that it is hereby in particular assumed that (a) the bias can be neglected and (b) that the true underlying model is indeed additive.
Therefore, the confidence intervals should be seen as rather optimistic. They nevertheless give an impression of the uncertainty under optimal conditions.
Looking at Figure \ref{fig:app1}, we find that in the first three months $\text{vf}=1$ leads to a significant increase in mortality risk.
We also find that the risk increase is more severe for older patients.
Figure \ref{fig:app2} does not provide evidence that $\text{vf}=1$ leads to an increased risk after surviving the first three months.
In the next section, we want to look at how confident we can be with the model results.

\subsection{Model robustness}

\subsubsection{CRPS score} \label{sec:cprs}
We transform our estimated hazard function $\alpha= \alpha_0+\alpha_1+\alpha_2$ into a plug-in estimator of the  survival function via the relationship $S(t|z)= \prod_{s\leq t} (1-\alpha(s,z) \mathrm ds)$. 
We then split our data randomly into an $80\%$ training set and $20\%$ test set.  We train our model
on the training set and evaluate the CPRS score  \citep{avati2020countdown} on the test set (note that a lower score indicates better performance):
\[
CRPS= m^{-1} \sum_{i=1}^m \int_0^{T_i} (1-\hat S(s|z_i))^2 \mathrm ds + \delta_i \int_{T_i}^{\infty} \hat S(s|z_i)^2 \mathrm ds,
\]
where $m$ is the size of the test set. 
Due to the additive structure, our survival prediction -- although consistent -- can still be negative. We therefore consider a simple adjustment where we numerically calculate
\[
\hat S^{\text{adj}}(s|z)= \prod_{s\leq t} (1-\hat \alpha^{adj}(s,z) \mathrm ds), \quad 
\hat \alpha^{adj}(s,z)=\max(\hat \alpha(s,z), 0).
\]
Lastly, we compare our local linear additive fit with the local constant multiplicative smooth backfitting estimator from \cite{hiabu2021smooth}. The results from 200 simulation runs can be seen in Figure \ref{fig:boxplots}.
\begin{figure}
\include{boxplots.tex}
\caption{
CPRS scores from 200 simulations of a 80/20 training-test-split. Boxplots are given for the four different data sets as
described on top of the plots and each time for three different models: smooth backfitting additive model, smooth backfitting additive model using the adjusted survival estimates $\hat S^{\text{adj}}(s)$ and the smooth backfitting multiplicative model from \cite{hiabu2021nonsmooth}.}
\label{fig:boxplots}
\end{figure}
% \begin{table}
% \centering
% \begin{tabular}{l|lll|llll}
%   &      & vf=0    &      &  &      & vf=1    &      \\ \hline
%   & Sex  & Diabetes & chf  &  & Sex  & Diabetes & chf  \\
% 0 & 29\% & 90\%     & 51\% &  & 36\% & 91\%     & 33\% \\
% 1 & 71\% & 10\%     & 49\% &  & 64\% & 9\%      & 67\% 
% \end{tabular}
% \caption{Prevalence of the risk-factors sex, diabetes, chf in our data set for the cohorts with vhf=0 and vhf=1. }
% \label{tab:vhf}
% \end{table}
We have two main observations. Firstly, the model choice does not seem to have a big impact when considering survival conditional on surviving the first three months. Secondly, for survival during the first three months using the adjusted survival probability estimates improves the performance but even better performance can be achieved by using a multiplicative model.
Nevertheless, we want to emphasize that our smooth backfitting additive estimators has the desirable projection property that if the additive  model assumption is violated the estimators converge to the closest additive fit, making the results therefore still interpretable.
We investigate this property in the next subsection.

\subsubsection{Stability under model misspecification}
We take the estimated multiplicative smooth backfitting model from the previous subsection, see also
Figures \ref{fig:app:mult1} and \ref{fig:app:mult2} in the Appendix,
as true model and investigate how 
in this case our additive estimator would look. When generating the four data sets ($\text{vf}=0,1$; risk in the first three months, risk conditional on surviving the first three months), we keep the same number of samples as in the original data sets while sampling (a, wmi) with replacement from the original data sets. Afterwards, for each row, we draw  a survival time from the
 multiplicative smooth backfitting model. The survival time is considered censored if it is greater than 0.25 when considering the first three months, and it is considered censored if it is greater than 5 when considering the period after the first three months.

We compare our additive smooth backfitting estimator to a somewhat optimal fit. Note that it is not clear how to derive an optimal fit analytically or even numerically  as it depends on the joint distribution of duration, age and wmi; which is not known.
Therefore, we approximate the optimal fit by estimating an additive smooth backfitting regression function
\citep{mammen1999existence, 10.1007/978-3-031-30114-8_5} based on 10,000 observations where the response is the known hazard. 
We consider 200 simulations and the fact that the regression estimator does not vary much as a good indicator giving us confidence that it is a good approximation of the optimal additive fit.
The results are given in Figures \ref{fig:sim1} and \ref{fig:sim2}.
We find that our proposed estimators (grey lines) -- despite the limited sample sizes -- are reasonably close to the regression fit such that we can conclude that our approach is working reasonably well in estimating
the optimal additive fit.
Lastly, it should be noted that we also tried a classical backfitting approach with kernel smoothers with the result that the estimators for all components diverged in every simulation run and did not provide any result.

\begin{figure}
\includegraphics[width=0.9\textwidth]{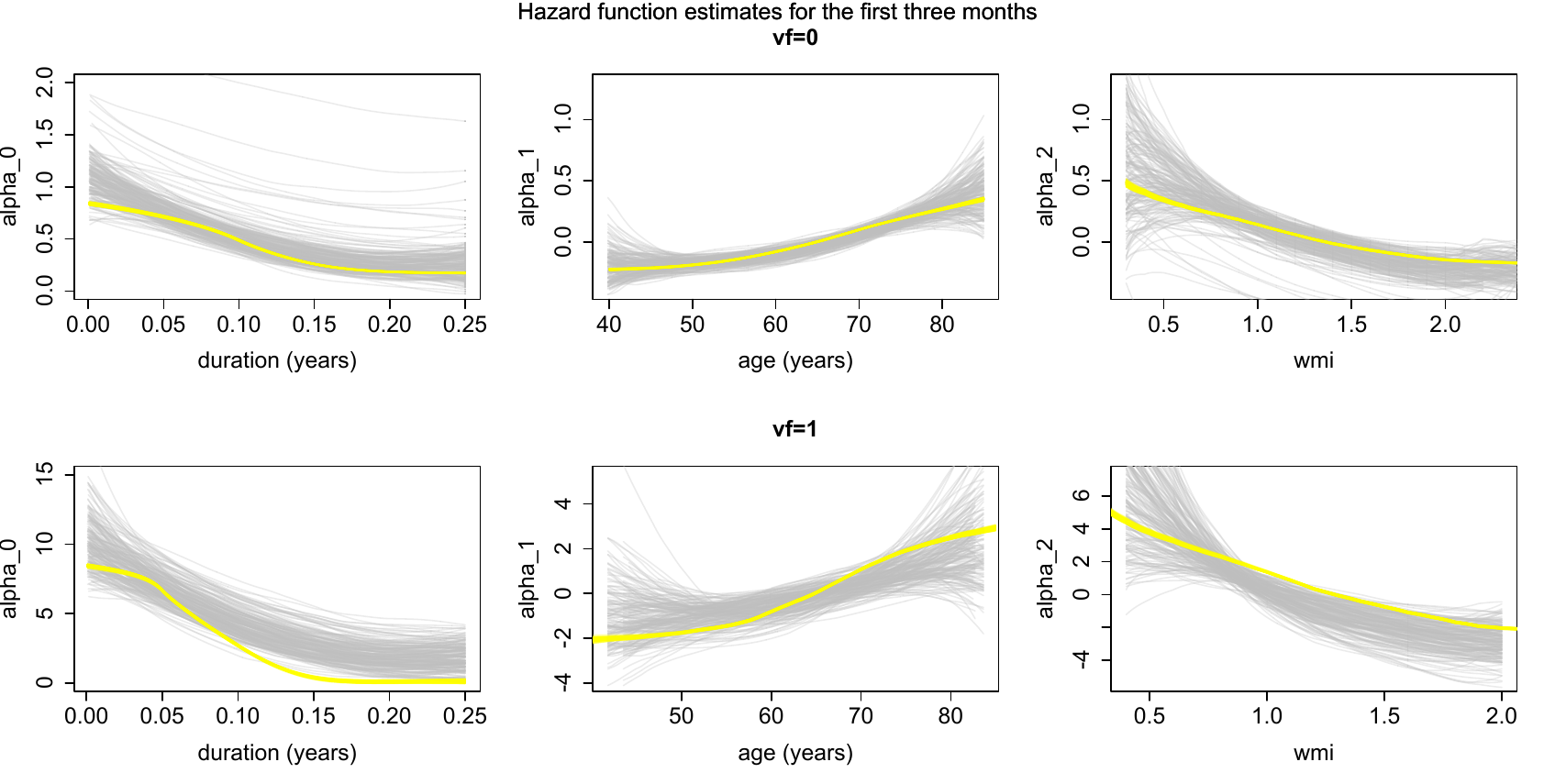}
\caption{200 simulations from a multiplicative hazard model, see Figure \ref{fig:app:mult1}. Grey curves are fitted local linear smooth backfitting estimators. Yellow curves are approximately optimal additive fits derived from a smooth backfitting additive regression fit with the true hazard as response and an inflated sample size of 10,000.}
\label{fig:sim1}
\end{figure}

\begin{figure}
\includegraphics[width=0.9\textwidth]{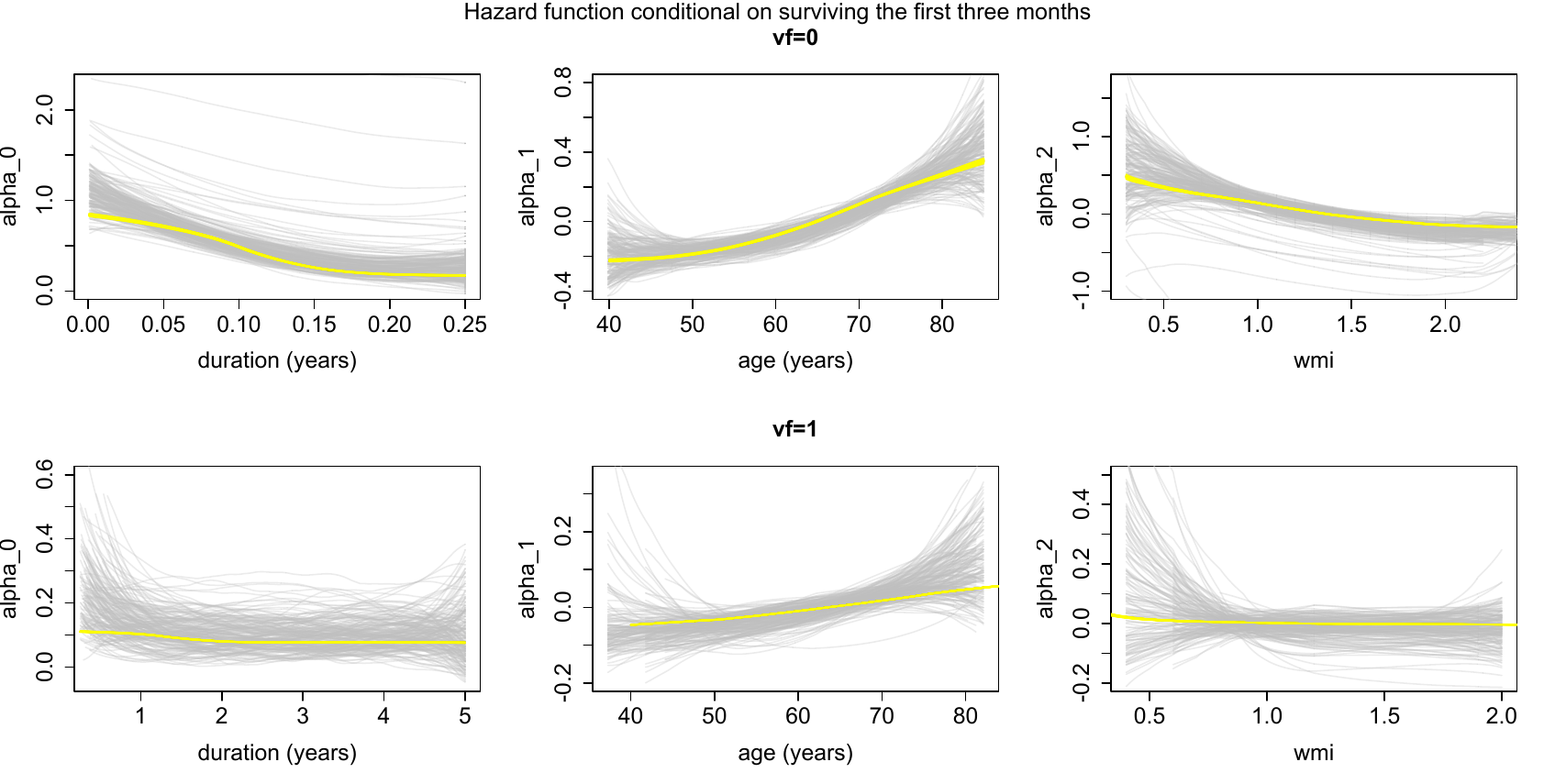}
\caption{200 simulations from a multiplicative hazard model, see Figure \ref{fig:app:mult2}.  Grey curves are fitted local linear additive smooth backfitting estimates. Yellow curves are approximately optimal additive fits derived from a smooth backfitting additive regression estimator with the true hazard as response and an inflated sample size of 10,000.}
\label{fig:sim2}
\end{figure}

\appendix

%\section{Appendix: theory for direct projection} \label{sec:theory:direct}
\section{Appendix} 
\subsection{Asymptotic theory for the local constant estimator} \label{sec:theory:lc}
For the proof of Theorem \ref{thm:main:theorem:lc}, we apply the general theory for smooth backfitting estimators. 
We split the estimator into a stochastic part and a part consisting of its bias plus a function that vanishes. For counting processes martingales, these two parts are usually referred to as the variable and the stable part, respectively. One has to show three things: the convergence of the backfitting algorithm, asymptotic normality of the stochastic part and that the bias part vanishes asymptotically. 
In \cite{Mammen:etal:99},  conditions for these three properties have been stated for a nonparametric regression setup. 
The main part of our proof is to verify these conditions under  Assumptions A1--A5. 
For completeness we restate the modified conditions in our notation. 

We also state propositions from \cite{Mammen:etal:99}, adapted to our notation, which imply the properties we need if the following assumptions hold. The difference to \cite{Mammen:etal:99} is that we make use of martingale properties and counting process theory instead of the usual arguments for kernel density estimators. 

We start with assumptions about the marginal exposures and convergence of marginal exposure estimators. Note that we don't assume any particular definition of $\hat E_j$ and $\hat E_{j,k}$, $j,k=0,\dots, d$, for the following propositions. 
\begin{description}
\item[B1] For all $ j\neq k$ it holds 
\[   \int \frac{E_{j,k}(x_j,x_k)^2}{E_j(x_j)E_k(x_k)} \mathrm d x_j \, \mathrm d x_k < \infty.   \]
\item[B2] It holds 
\begin{align*}
\int \left[\frac{\hat E_j(x_j) - E_j(x_j)}{E_j(x_j)}\right]^2 E_j(x_j)\mathrm dx_j &= o_P(1), \\
\int \left[\frac{\hat E_{j,k}(x_j,x_k)}{E_j(x_j)E_k(x_k)}  - \frac{E_{j,k}(x_j,x_k)}{E_j(x_j)E_k(x_k)}\right]^2 E_j(x_j)E_k(x_k)\mathrm dx_j \, \mathrm dx_k & = o_P(1), \\
\int \left[\frac{\hat E_{j,k}(x_j,x_k)}{\hat E_j(x_j)E_k(x_k)}  - \frac{E_{j,k}(x_j,x_k)}{E_j(x_j)E_k(x_k)}\right]^2 E_j(x_j)E_k(x_k)\mathrm dx_j \, \mathrm dx_k & = o_P(1). \\
\end{align*}
Moreover, $\hat E_j$ vanishes outside the support of $E_j$, $\hat E_{j,k}$ vanishes outside the support of $E_{j,k}$ and $\hat E$ is symmetric, i.e. $\hat E_{j,k}(x_j,x_k) = \hat E_ {k,j}(x_k, x_j)$. 
\end{description}
We assume that the marginal pilot estimator and proportions of the marginal exposure estimators are somehow bounded in probability:
\begin{description}
\item[B3] There exists a constant $C$ such that with probability tending to 1 for all $j$, 
\[
\int \hat \alpha_j(x_j)^2E_j(x_j) \mathrm dx_j \leq C. 
\]
\item[B4]  For some finite intervals $S_j\subset \mathbb R$ that are contained in the support of $E_j$, $j=1,\dots,d$, we suppose that there exists a finite constant $C$ such that with probability tending to 1 for all $j\neq k$,
\[
\sup_{x_j\in S_j} \int\frac{\hat E_{j,k}(x_j,x_k)}{ E_j(x_j)\hat E_k(x_k)^2} \mathrm dx_k \leq C. 
\]
\end{description}
We now introduce the notation $\hat \alpha_j = \hat \alpha_j^A +\hat \alpha_j^B$ for the one-dimensional smoother with
\[
\hat \alpha_j^A = {\hat E_j(x_j)}^{-1}{  \frac 1 n \sum_{i=1}^n \int k_h(x_j,X_{ij}(s)) \mathrm d M_i(s)}, 
\]
the variable part and 
\[
\hat \alpha_j^B=   {\hat E_j(x_j)}^{-1}{  \frac 1 n \sum_{i=1}^n \int k_h(x_j,X_{ij}(s)) \mathrm d \Lambda_i(s)}, \]
the stable part of $\hat \alpha_j$. Here, the compensator $\Lambda_i$ of $N_i$ is defined such that $M_i$ is a martingale and $N_i= M_i + \Lambda_i$. The definition of $M_i$ will be given later. Now we define the stochastic and stable components of the local constant smooth backfitting estimator,  $\bar \alpha_{0,j}^s$, $\bar \alpha_j^s$, for $s\in \{A,B\}$, as the solution of 
\begin{equation}  \label{eq:backfit:s:LC} 
\bar \alpha_k^s(x_k) = \hat \alpha_k^s(x_k)  - \hat \alpha^s_{0,k} -  \sum_{j\neq k}  \int_{\mathcal{X}_{j}}   \bar \alpha^s_j(x_j) \left[\frac{\hat E_{j,k}(x_j,x_k)}{\hat E_k(x_k)} - \hat E_{j,[k+]}(x_j) \right] \mathrm   dx_{j} 
,\end{equation}
where $\hat \alpha^s_{0,k} =  {\int  \hat \alpha^s_k(x_k)  \hat E_k(x_k) \mathrm dx_k } / {\int \hat E_k(x_k)\mathrm dx_k}$. 
Existence and uniqueness of $\hat \alpha_k^A, \hat \alpha^B_k$  is stated in Proposition \ref{thm:conv:of:backfitting:lc}  under the following assumptions. Assumption B6 assures converges of the variable part whereas B7 will be used for the structure of the bias part. 
\begin{description}
\item[B5] There exists a constant $C$ such that with probability tending to 1 for all $j$, it holds
\begin{align*}
\int \hat \alpha_j^A(x_j)^2 E_j(x_j) \mathrm dx_j \leq C , \\
\int \hat \alpha_j^B(x_j)^2 E_j(x_j) \mathrm dx_j \leq C .
\end{align*}
\item[B6] We assume that there is a sequence $\Delta_n \to 0$ such that 
\begin{align*}
\sup_{x_k\in S_k} \left\lvert  \int \frac{\hat E_{j,k}(x_j,x_k)}{\hat E_k(x_k)}\hat \alpha^A_j(x_j) \mathrm dx_j   \right\rvert = o_P(\Delta_n), \\
\left\lVert   \int \frac{\hat E_{j,k}(x_j,x_k)}{\hat E_k(x_k)}\hat \alpha^A_j(x_j) \mathrm dx_j   \right\rVert_{2,k} = o_P(\Delta_n),
\end{align*}
where $\lVert \cdot \rVert_{2,k}$ denotes norm defined via $\lVert g \rVert_{2,k} = \int g(u)^2 E_k(u) \mathrm du$.  The sets $S_k$ have been introduced in Assumption B4.  

\item[B7] There exist deterministic functions $\mu_{n,j}$ such that 
\[
\sup_{x_j \in S_j} \left\lvert \bar \alpha_j^B(x_j) - \mu_{n,j}(x_j) \right \rvert = o_p (\Delta_n),
\]
where  $S_k$ has been introduced in Assumption B4. 
\end{description}

The following two propositions are results from \cite{Mammen:etal:99}, adapted to our setting and notation. 
Under Assumptions B1--B3 and B5, Proposition \ref{thm:conv:of:backfitting:lc} ensures that the backfitting algorithm converges and Propositions \ref{thm:stochastic:part:lc} and \ref{thm:bias:part:lc} give the asymptotic behavior of the backfitting estimator under Assumptions B1--B9. 
\begin{proposition}[Convergence of backfitting]\label{thm:conv:of:backfitting:lc}   %\eqref{eq:general:case:1}--\eqref{eq:hatS}
Under Assumptions B1--B3, with probability tending to 1, there exists a unique solution $\{\bar \alpha_j : j=0,\dots,d\}$ to %\eqref{eq:backfit:LC:theory}. 
\eqref{eq:LC:backfiting:equation}. Moreover, there exist constants $0 < \gamma < 1$ and $c> 0$ such that, with probability tending to 1, it holds:
\[
\int \left[\bar \alpha_j^{[r]}(x_j) -\bar \alpha_j(x_j)\right]^2 E_j(x_j)\mathrm dx_j \leq c\gamma^{2r}  \left( 1 + \sum_{l=0}^d \int \left [ \bar \alpha_l^{[0]}(x_l) \right]^2 E_l(x_l) \mathrm dx_l \right),
\]
for $j=0,\dots,d$. The functions $\bar \alpha_{l}^{[0]}$ are the starting values of the backfitting algorithm. For $r>0$ the functions $\bar \alpha_{0}^{[r]},\dots,\bar \alpha_{d}^{[r]}$ are defined by equation \eqref{eq:backfit:LC}. 

Moreover, under the additional Assumption B5, with probability tending to 1, there exists a solution $\{ \bar \alpha_j^s :j=0,\dots,d \}$ of \eqref{eq:backfit:s:LC} that is unique for $s=A,B$, respectively . 
\end{proposition}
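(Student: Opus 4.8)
The plan is to recast the backfitting equation \eqref{eq:LC:backfiting:equation} together with the norming \eqref{eq:cond} as a fixed-point problem in a Hilbert space of centered additive functions, and then to invoke the classical convergence theory for cyclic (Gauss--Seidel) projections, following the appendix of \cite{Mammen:etal:99}. Concretely, I would work in the space $\mathcal{G}$ of tuples $g=(g_0,\dots,g_d)$ with $g_k\in L_2(\hat E_k)$ and $\int g_k\hat E_k\,\mathrm dx_k=0$, identified with additive functions $x\mapsto\sum_k g_k(x_k)$ and equipped with the inner product induced by $\|g\|_{\hat E}^2=\sum_k\int g_k^2\hat E_k + \sum_{j}\sum_{k\neq j}\int\!\!\int g_j(x_j)g_k(x_k)\hat E_{j,k}(x_j,x_k)\,\mathrm dx_j\,\mathrm dx_k$, which is a well-defined inner product on the event where the empirical analogue of Assumption B1 holds (finiteness coming from B1 together with the latter two displays of B2). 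Writing $\hat{\mathcal A}_{jk}g(x_k)=\int g(x_j)\hat E_{j,k}(x_j,x_k)/\hat E_k(x_k)\,\mathrm dx_j$, equation \eqref{eq:LC:backfiting:equation} becomes $\bar\alpha_k + \sum_{j\neq k}\hat{\mathcal A}_{jk}\bar\alpha_j = \hat\alpha_k - \bar\alpha^*$, which are precisely the normal equations for the $\|\cdot\|_{\hat E}$-orthogonal projection of the additive pilot onto $\mathcal{G}_0+\dots+\mathcal{G}_d$, where $\mathcal{G}_k=\{g\in\mathcal{G}:g_j\equiv 0,\ j\neq k\}$; the iteration \eqref{eq:backfit:LC} (update followed by recentering) is exactly the cyclic projection scheme for this system.

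Existence and uniqueness then follow from the projection theorem once $\mathcal{G}_0+\dots+\mathcal{G}_d$ is closed, and the geometric bound follows from the quantitative version of the cyclic-projection convergence theorem: one has $\gamma<1$ as soon as the maximal angle between $\mathcal{G}_k$ and $\sum_{j\neq k}\mathcal{G}_j$ stays bounded away from zero, which in turn is controlled by the Hilbert--Schmidt norms of the cross-operators, themselves bounded through $\sup_{j\neq k}\int E_{j,k}^2/(E_jE_k)<\infty$ from Assumption B1. Assumption B2 is used to transfer this population bound to the empirical operators built from $\hat E_{j,k}$ and $\hat E_k$, uniformly enough that $\gamma$ and $c$ can be chosen as deterministic constants on an event of probability tending to one; B2 is also what lets me pass between the $\hat E_j$-weighted norm, in which the iteration contracts, and the $E_j$-weighted norm appearing in the statement of the proposition. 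Assumption B3 ensures the driving term $\sum_k(\hat\alpha_k-\bar\alpha^*)$ has $\|\cdot\|_{\hat E}$-norm bounded in probability, so that the projection and all iterates lie in a fixed ball and the asserted bound in terms of $\sum_l\int(\bar\alpha_l^{[0]})^2E_l$ is meaningful for arbitrary finite-norm starting values.

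For the final assertion I would use the linearity of the construction in its right-hand side: equation \eqref{eq:backfit:s:LC} has the same left-hand operator and the same cross-operators $\hat E_{j,k}/\hat E_k$ as \eqref{eq:LC:backfiting:equation}, only the centering term $\hat E_{j,[k+]}$ and the data term $\hat\alpha_k^s-\hat\alpha_{0,k}^s$ differing, so the contraction estimate already obtained applies verbatim and yields a unique solution $\{\bar\alpha_j^s\}$ for $s=A$ and $s=B$ provided the corresponding driving term has bounded $L_2(E_j)$ norm -- which is exactly what Assumption B5 supplies for the variable part $\hat\alpha_j^A$ and the stable part $\hat\alpha_j^B$; linearity also yields $\bar\alpha_j=\bar\alpha_j^A+\bar\alpha_j^B$, the decomposition needed in the later propositions. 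The main obstacle I anticipate is the transfer step of the second paragraph: making precise that the weak consistency statements of B2 (integrated squared relative errors that are $o_P(1)$) are strong enough to carry the deterministic angle and closedness bounds coming from B1 over to the random operators built from $\hat E_{j,k}$ and $\hat E_k$, and to do so uniformly enough that the single contraction constant $\gamma$ stays below one with probability tending to one. Once this is in hand, the identification of backfitting with cyclic projection, the projection theorem, and the geometric convergence of Gauss--Seidel for subspaces at positive angle are all standard and quotable from \cite{Mammen:etal:99}.
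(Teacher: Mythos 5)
Your proposal reconstructs exactly the argument that the paper relies on: the paper does not prove this proposition itself but quotes it as an adaptation of the corresponding result in \cite{Mammen:etal:99}, whose proof proceeds precisely via the identification of backfitting with cyclic projections onto closed subspaces of an empirically weighted $L_2$ space, with B1 giving the Hilbert--Schmidt/angle control, B2 transferring it to the empirical operators, and B3/B5 bounding the driving terms. Your sketch, including the difficulty you flag about carrying the deterministic angle bounds over to the random operators, matches that cited proof, so there is nothing to add.
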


\begin{proposition}[Asymptotic behavior of stochastic part]\label{thm:stochastic:part:lc}
Suppose that Assumptions B1--B6 hold for a sequence $\Delta_n$ and intervals $S_j$, $j=0,\dots,d$. Then it holds that 
\[
\sup_{x_j\in S_j} \left\lvert \bar \alpha_j^A(x_j) - [\hat  \alpha_j^A(x_j) -\bar \alpha_{0,j}^A] \right\rvert = o_P(\Delta_n).
\]
Under the additional Assumption B7  it holds
\[
\sup_{x_j\in S_j} \left\lvert \bar \alpha_j^A(x_j) - [\hat  \alpha_j^A(x_j) -\bar \alpha_{0,j}^A + \mu_{n,j}(x_j)] \right\rvert = o_P(\Delta_n).
\]
\end{proposition}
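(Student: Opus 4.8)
The plan is to treat the defining relation \eqref{eq:backfit:s:LC} with $s=A$ as a linear fixed-point equation in the Hilbert space $\mathcal H^\ast = \prod_{j=0}^d L_2(E_j)$ equipped with the product norm $\|g\|^2 = \sum_{j=0}^d \int g_j(u)^2 E_j(u)\,\mathrm du$, and to show that smooth backfitting perturbs the centred one-dimensional stochastic fit $\widetilde\alpha_j(x_j):=\hat\alpha_j^A(x_j)-\bar\alpha_{0,j}^A$ only by a term that is $o_P(\Delta_n)$, first in $\mathcal H^\ast$ and then uniformly on $S_j$. Writing $\widehat{\mathcal T}$ for the random interaction operator built from the sum over $l\neq k$ on the right-hand side of \eqref{eq:backfit:s:LC}, that equation reads $\bar\alpha^A = \widetilde\alpha + \widehat{\mathcal T}\bar\alpha^A$; substituting $\bar\alpha_j^A = \widetilde\alpha_j + r_j$ then turns it into the fixed-point equation $r = \widehat{\mathcal T}r + g$ for the remainder $r=(r_0,\dots,r_d)$, with driving term $g:=\widehat{\mathcal T}\widetilde\alpha$.

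By Proposition \ref{thm:conv:of:backfitting:lc}, which rests on Assumptions B1--B3 and B5, with probability tending to $1$ the operator $\widehat{\mathcal T}$ is a contraction on $\mathcal H^\ast$ with a modulus $\gamma<1$ that is bounded away from $1$, so that $\|r\|\le(1-\gamma)^{-1}\|g\|$. Assumption B6 is essentially the statement that each component of $g$ is $o_P(\Delta_n)$ in the $\|\cdot\|_{2,k}$-norm; the norming constants $\bar\alpha_{0,l}^A$ and the subtracted marginals in \eqref{eq:backfit:s:LC} contribute only further pieces that obey the same bound, because of the centring built into the equation and because $\bar\alpha_{0,l}^A$ is itself of order $n^{-1/2}$. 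Hence $\|g\| = o_P(\Delta_n)$ and therefore $\sum_{j=0}^d\int r_j(u)^2 E_j(u)\,\mathrm du = o_P(\Delta_n^2)$, which is the $L_2(E_j)$ version of the first assertion.

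To upgrade to the uniform statement I would return to the pointwise identity $r_k(x_k) = g_k(x_k) - \sum_{l\neq k}\int r_l(x_l)\bigl[\hat E_{l,k}(x_l,x_k)/\hat E_k(x_k) - \hat E_{l,[k+]}(x_l)\bigr]\,\mathrm dx_l$. Here $\sup_{x_k\in S_k}|g_k(x_k)|$ is $o_P(\Delta_n)$ by the sup-norm half of Assumption B6, while for each $l\neq k$ the Cauchy--Schwarz inequality applied with weight $E_l$, together with Assumption B4, gives $\sup_{x_k\in S_k}\bigl|\int r_l(x_l)\,\hat E_{l,k}(x_l,x_k)/\hat E_k(x_k)\,\mathrm dx_l\bigr| \le O_P(1)\,\bigl(\int r_l(x_l)^2 E_l(x_l)\,\mathrm dx_l\bigr)^{1/2} = o_P(\Delta_n)$, and the subtracted marginal term is dealt with the same way. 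Summing over the finitely many $l\neq k$ yields $\sup_{x_k\in S_k}|r_k(x_k)| = o_P(\Delta_n)$, which is the first assertion. The second assertion, which records the corresponding expansion for the full backfitting estimator $\bar\alpha_j = \bar\alpha_j^A + \bar\alpha_j^B$, then follows by combining the first assertion with Assumption B7: since B7 gives $\bar\alpha_j^B(x_j) = \mu_{n,j}(x_j) + o_P(\Delta_n)$ uniformly on $S_j$, we get $\bar\alpha_j(x_j) = \hat\alpha_j^A(x_j) - \bar\alpha_{0,j}^A + \mu_{n,j}(x_j) + o_P(\Delta_n)$ uniformly on $S_j$.

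\emph{The main obstacle.} The step I expect to require most care is the passage from the $L_2(E_j)$-bound on $r$ to the uniform bound on $S_j$: this is precisely where the smoothing character of $\widehat{\mathcal T}$ must be exploited, through the integrability condition B1 and the uniform boundedness condition B4, and where one has to keep careful track of the norming constants $\bar\alpha_{0,k}^A$ and of the subtracted marginals $\hat E_{l,[k+]}$ so that the telescoping in \eqref{eq:backfit:s:LC} stays legitimate. Showing that $\widehat{\mathcal T}$ is a genuine contraction, and not merely non-expansive, with probability tending to $1$ is also delicate, but this has already been isolated in Proposition \ref{thm:conv:of:backfitting:lc} and can simply be quoted; likewise, the martingale arguments that control $\hat\alpha_j^A$ itself enter only in the verification of Assumptions B5 and B6, and the central limit theorem for $\hat\alpha_j^A$ is deferred to the final assembly of Theorem \ref{thm:main:theorem:lc}.
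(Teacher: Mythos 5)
The paper offers no proof of this proposition at all: it is imported from \cite{Mammen:etal:99} (``results from \cite{Mammen:etal:99}, adapted to our setting and notation''), and the paper's only remaining work is to verify Assumptions B1--B9 later, inside the proof of Theorem \ref{thm:main:theorem:lc}. Your proposal therefore supplies an argument where the paper supplies a citation, and the argument is the right one --- it is essentially the proof underlying the cited result: rewrite \eqref{eq:backfit:s:LC} as $r=\widehat{\mathcal T}r+g$ with $g=\widehat{\mathcal T}\widetilde\alpha$, invoke the contraction modulus $\gamma<1$ from Proposition \ref{thm:conv:of:backfitting:lc} to get $\lVert r\rVert\le(1-\gamma)^{-1}\lVert g\rVert$, use B6 (plus the centring and the fact that $\bar\alpha_{0,j}^A=O_P(n^{-1/2})=o_P(\Delta_n)$) to make $\lVert g\rVert=o_P(\Delta_n)$, and then upgrade to the sup-norm by one more pass through the pointwise identity with Cauchy--Schwarz and B4. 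Three points deserve explicit care in a write-up. First, the contraction estimate is only available on the subspace of tuples centred with respect to $\hat E_j$; you should state that $r_j=\bar\alpha_j^A-(\hat\alpha_j^A-\bar\alpha_{0,j}^A)$ lies in that subspace (both terms integrate to zero against $\hat E_j$ by construction of the norming constants), since otherwise $\lVert\widehat{\mathcal T}r\rVert\le\gamma\lVert r\rVert$ cannot be quoted. Second, B4 controls $\sup_{x_j}\int\hat E_{j,k}/(E_j\hat E_k^2)\,\mathrm dx_k$, whereas Cauchy--Schwarz in your uniform step produces $\int\hat E_{l,k}(x_l,x_k)^2/(\hat E_k(x_k)^2E_l(x_l))\,\mathrm dx_l$; passing between the two requires the uniform $O_P(1)$ bounds on $\hat E_{j,k}$, $\hat E_j$ and $\hat E_j^{-1}$ (equations \eqref{eq:Ej:Op:of:1}--\eqref{eq:Ejk:Op:of:1} in the paper), so this step uses B4 together with those bounds, not B4 alone. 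Third, the second display of the proposition as printed has $\bar\alpha_j^A$ on the left, which cannot be meant literally, since $\mu_{n,j}$ contains a contribution of exact order $\Delta_n$; your reading --- that it concerns the full estimator, in parallel with Proposition \ref{thm:stochastic:part:ll} --- is clearly the intended one, and your route via B7 together with the linearity of the backfitting system (so that $\bar\alpha_j=\bar\alpha_j^A+\bar\alpha_j^B$ up to norming constants, by uniqueness of the solutions) is the correct way to obtain it.
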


For the convergence of the bias term, we need the following. 

\begin{description}
\item[B8] For all $j \neq k$, it holds 
\[
\sup_{x_j\in S_j} \int \left \lvert   \frac{\hat E_{j,k}(x_j,x_k)}{\hat E_j(x_j)\hat E_k(x_k)}  - \frac{ E_{j,k}(x_j,x_k)}{E_j(x_j)E_k(x_k)}    \right\rvert E_k(x_k)\mathrm dx_k = o_p(1). 
\]
\end{description}
At last, Assumption B9 is about the structure of the bias term of the estimators. 
\begin{description}
\item[B9] There exist deterministic functions $a_{n,0}(x_0), \dots , a_{n,d}(x_d)$ and constants $a_{n}^*$, $\gamma_{n,0},\dots,\allowbreak \gamma_{n,d}$ and a function $\beta:\mathbb R \to \mathbb R$ (not depending on $n$), such that 

\begin{align*}
\int a_{n,j}(x_j)^2 E_j(x_j)\mathrm dx_j &< \infty ,\\
\int \beta(x)^2 E(x) \mathrm d x  &< \infty ,\\
\sup_{x_1\in S_1, \dots, x_d\in S_d}  \lvert \beta(x) \rvert &< \infty ,\\
\gamma_{n,j} - \int a_{n,j}(x_j) \hat E_j(x_j) \mathrm dx_j  &=  o_P(\Delta_n) , \\
\sup_{x_j\in S_j} \left\lvert \hat \alpha_j^B(x_j) - \hat \mu_{n,0} -\hat \mu_{n,j}(x_j) \right\rvert &= o_P(\Delta_n), \\
\int \left\lvert \hat \alpha_j^B(x_j) - \hat \mu_{n,0} -\hat \mu_{n,j}(x_j) \right\rvert ^2 E_j(x_j) \mathrm dx_j &= o_P(\Delta_n^2), 
\end{align*}
for random variables $\hat \mu_{n,0}$ and where
\[
\hat \mu_{n,j}(x_j) = a_{n}^* + a_{n,j}(x_j) + \sum_{k\neq j} \int  a_{n,k}(x_k)  \frac{\hat E_{j,k}(x_j,x_k)}{\hat E_j(x_j)} \mathrm dx_k  + \Delta_n \int \beta(x)  \frac{E(x)}{E_j(x_j)}\mathrm d x_{-j}. 
\]
\end{description}

The following Proposition is taken from \cite{Mammen:etal:99} and we have adapted it to our notation. It implies in particular that the bias term of the smooth backfitting estimators equals the projections of the bias of the full-dimensional estimator of \cite{Linton:etal:03}. 

\begin{proposition}[Asymptotic behavior of bias part]\label{thm:bias:part:lc}
Under Assumptions B1--B6, B8, B9, for $j=0,\dots,d$,  it holds 
\begin{align*}
\sup_{x_j\in S_j} \left\lvert \bar \alpha_j^B(x_j) - \mu_{n,j}(X_j)  \right\rvert = o_P(\Delta_n),
\end{align*}
for $\mu_{n,j}(x_j) = a_{n,j}(x_j) - \gamma_{n,j} + \Delta_n \beta_j(x_j)$ with 
\[
(\beta_0, \beta_1, \dots,\beta_d) = \argmin_{\mathcal B} \int \left[ \beta(x) -\beta_0 - \beta_1(x_1) - \dots - \beta_d(x_d) \right]^2 E(x) \mathrm d x,
\]
and $\mathcal B = \{\tilde\beta=(\beta_0,\beta_1,\dots,\beta_d) : \int \beta_j(x_j)E_j(x_j)\mathrm dx_j = 0 ; j=0,\dots,d\}$. 
In particular, does Assumption B7 hold with this choice of $\mu_{n,j}(x_j)$. 
\end{proposition}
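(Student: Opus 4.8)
\emph{Proof idea.} This proposition is the counting-process counterpart of the bias-term analysis in \cite{Mammen:etal:99}, so the plan is to imitate that argument: one checks that our Assumptions B1--B6, B8, B9 are in the form that result requires and then follows the same steps. Concretely, the strategy is to exhibit the deterministic function $\mu_{n,j}(x_j) = a_{n,j}(x_j) - \gamma_{n,j} + \Delta_n\beta_j(x_j)$ as an \emph{approximate fixed point} of the centred backfitting map defining $\bar\alpha_j^B$ through \eqref{eq:backfit:s:LC} with $s=B$, and then to transfer this to the genuine solution via the contraction property from Proposition \ref{thm:conv:of:backfitting:lc}.

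First I would substitute into the right-hand side of \eqref{eq:backfit:s:LC} the B9 decomposition $\hat\alpha_j^B(x_j) = \hat\mu_{n,0} + \hat\mu_{n,j}(x_j) + o_P(\Delta_n)$ — valid both in $\sup_{S_j}$ and in $\lVert\cdot\rVert_{2,j}$ — together with the candidate $(\mu_{n,0},\dots,\mu_{n,d})$ in the integral operators. The term $\sum_{k\neq j}\int a_{n,k}(x_k)\hat E_{j,k}(x_j,x_k)/\hat E_j(x_j)\,\mathrm dx_k$ sitting inside $\hat\mu_{n,j}$ cancels the contribution of the $a_{n,k}$-parts of the candidate passed through the operator, leaving $a_{n,j}(x_j)$ up to constants; those constants, together with the $\gamma_{n,k}$-parts and the $\hat E_{j,[k+]}$ centring terms, are absorbed by $\hat\alpha_{0,j}^B$ after using $\gamma_{n,k} - \int a_{n,k}\hat E_k\,\mathrm dx_k = o_P(\Delta_n)$ from B9 and replacing $\hat E_{j,k}/(\hat E_j\hat E_k)$ by $E_{j,k}/(E_jE_k)$ up to $o_P(1)$ via B2 and B8. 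This bookkeeping carries most of the work.

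It then remains to match the $O(\Delta_n)$ part: the $\Delta_n$-scale piece of $\hat\mu_{n,j}$ is $\Delta_n\int\beta(x)E(x)/E_j(x_j)\,\mathrm dx_{-j}$, and passing $\Delta_n\beta_k$ through the operator contributes $-\Delta_n\sum_{k\neq j}\int\beta_k(x_k)E_{j,k}(x_j,x_k)/E_j(x_j)\,\mathrm dx_k$ up to $o_P(\Delta_n)$, so the residual at scale $\Delta_n$ equals $\Delta_n$ times
\[
\beta_j(x_j) - \int\beta(x)\frac{E(x)}{E_j(x_j)}\mathrm dx_{-j} + \sum_{k\neq j}\int\beta_k(x_k)\frac{E_{j,k}(x_j,x_k)}{E_j(x_j)}\mathrm dx_k + \mathrm{const},
\]
which vanishes because these are exactly the normal equations characterising the minimiser $(\beta_0,\dots,\beta_d)$ of $\int[\beta(x)-\beta_0-\sum_j\beta_j(x_j)]^2E(x)\,\mathrm dx$ over $\mathcal B$, with the constant pinned down by $\int\beta_jE_j\,\mathrm dx_j=0$. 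Hence $(\mu_{n,0},\dots,\mu_{n,d})$ solves \eqref{eq:backfit:s:LC} up to an $o_P(\Delta_n)$ error in the $\lVert\cdot\rVert_{2,j}$ norms; writing $r_j = \bar\alpha_j^B - \mu_{n,j}$ and subtracting the two relations, $(r_0,\dots,r_d)$ is a near-fixed-point of the homogeneous backfitting operator, which by Proposition \ref{thm:conv:of:backfitting:lc} is, with probability tending to one, a contraction with rate $\gamma<1$ on the relevant normalised subspace of $\prod_j L^2(E_j)$, so $\sum_j\lVert r_j\rVert_{2,j} = o_P(\Delta_n)$. The sup-norm bound then follows from one further smoothing step: in \eqref{eq:backfit:s:LC} the $r_j$ occur only under kernels $\hat E_{j,k}(x_j,x_k)/\hat E_k(x_k)$, so Cauchy--Schwarz gives $\sup_{x_k\in S_k}\bigl|\int r_j(x_j)\hat E_{j,k}(x_j,x_k)/\hat E_k(x_k)\,\mathrm dx_j\bigr| \le \lVert r_j\rVert_{2,j}(\sup_{x_k\in S_k}\int\hat E_{j,k}^2/(E_j\hat E_k^2)\,\mathrm dx_j)^{1/2} = o_P(\Delta_n)$ by B4, and combined with the $\sup_{S_j}$-part of B9 this yields $\sup_{x_j\in S_j}|r_j(x_j)| = o_P(\Delta_n)$. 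Since $\mu_{n,j}$ is deterministic, the final claim that B7 holds with this $\mu_{n,j}$ is then immediate.

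I expect the main obstacle to be the cancellation bookkeeping of the second and third steps: sorting which pieces vanish by construction of the B9 decomposition, which are swallowed by the centring/normalisation constants, and which are genuine $o_P(\Delta_n)$ remainders requiring B2 and B8 to replace empirical marginal exposures by their population counterparts — and, in the transfer step, running the contraction estimate in the correct normalised space so that the constants $\gamma_{n,j}$ and $\hat\mu_{n,0}$ do not contaminate the bound.
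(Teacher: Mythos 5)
The paper offers no proof of this proposition at all --- it is stated as a result ``taken from'' Mammen, Linton and Nielsen (1999) and merely adapted to the counting-process notation --- and your sketch is a faithful reconstruction of exactly the argument that citation points to: exhibit $\mu_{n,j}$ as an approximate fixed point of the centred backfitting operator via the B9 decomposition and the normal equations characterising the projection of $\beta$, transfer to the true solution $\bar\alpha_j^B$ by the contraction property of Proposition~\ref{thm:conv:of:backfitting:lc}, and upgrade from $L^2(E_j)$ to the sup-norm by one further pass through the smoothing operators using B4. Your approach is correct and is the same one the paper implicitly relies on, so there is nothing substantive to compare.
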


With the next lemma we ensure that the constant $\alpha^*$ is estimated at parametric rate in the local constant setting. This standard result will also be needed in the proof of Theorem \ref{thm:main:theorem:lc}. 

\begin{lemma}\label{lem:alpha:star:parametric:rate:LC} Let $\bar \alpha^* =  \left(\sum_{i=1}^n \int \mathrm dN_i(s) \right) / \left(\sum_{i=1}^n \int Y_i(s) \mathrm ds \right)$ as defined in equation \eqref{eq:alphastar:simple}. Under the condition $ \int \alpha_j(x_j) E_j(x_j) \mathrm dx_j = 0 $, for $j=0,\dots,d$ together with Assumption A2, it holds 
%\begin{center}  \textbf{Assumptions?}  \end{center}
\[
%\hat \alpha^* - \alpha^* = o_p(n^{-1/2}).
n^{1/2} \left (\bar \alpha^* - \alpha^*\right ) \to \mathcal N\left(0,\sigma_{\alpha^*}^2 \right),
\]
as $n\to \infty$ and for $\sigma_{\alpha^*}^2 =  \alpha^*(1- \alpha^* )$. This implies in particular $\bar \alpha^* - \alpha^* = O_p(n^{-1/2})$. 
\end{lemma}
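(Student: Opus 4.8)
The plan is to reduce the statement to an ordinary central limit theorem for an i.i.d.\ sum by passing to the counting-process martingale attached to each $N_i$, and to use the identification conditions only to center this sum correctly (which is exactly the content of the unbiasedness claim in Remark~\ref{rem:alpha:star1}).

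First I would write $N_i = M_i + \Lambda_i$ on $[0,\mathcal{T}]$, where $\Lambda_i(t) = \int_0^t \alpha(X_i(s)) Y_i(s)\,\mathrm ds$ is the $\mathcal F_{i,t}$-compensator and $M_i$ the associated square-integrable martingale. Substituting $\alpha(X_i(s)) = \alpha^* + \sum_{j=0}^d \alpha_j(X_{ij}(s))$ into $\Lambda_i(\mathcal{T})$ gives the exact identity
\begin{equation*}
\frac 1 n \sum_{i=1}^n N_i(\mathcal{T}) - \alpha^* \cdot \frac 1 n\sum_{i=1}^n \int_0^{\mathcal{T}} Y_i(s)\,\mathrm ds \;=\; \frac 1 n \sum_{i=1}^n M_i(\mathcal{T}) + \frac 1 n \sum_{i=1}^n R_i, \qquad R_i := \int_0^{\mathcal{T}} \sum_{j=0}^d \alpha_j(X_{ij}(s))\, Y_i(s)\,\mathrm ds .
\end{equation*}
By the law of large numbers together with Assumption~A2 (which identifies $\mathbb E[Y_1(s)]=\gamma(s)$), the denominator $\tfrac1n\sum_i\int_0^{\mathcal{T}}Y_i(s)\,\mathrm ds$ converges in probability to $\mu := \int_0^{\mathcal{T}}\gamma(s)\,\mathrm ds$, which is strictly positive under Assumption~A1. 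Dividing the displayed identity by this denominator expresses $\bar\alpha^* - \alpha^*$ as a ratio with a tractable numerator.

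The key step is $\mathbb E[R_1]=0$, and this is precisely where the identification conditions enter. For $j=0$ the integrand is deterministic in $X_{10}(s)=s$, so that term contributes $\int_0^{\mathcal{T}}\alpha_0(s)\,\mathbb E[Y_1(s)]\,\mathrm ds = \int \alpha_0(x_0)E_0(x_0)\,\mathrm dx_0 = 0$, using that $E_0$ and $\gamma$ agree almost surely. For $j\geq 1$, conditioning on $\{Y_1(s)=1\}$ gives $\mathbb E[\alpha_j(Z_{1j}(s))Y_1(s)] = \mathbb E[Y_1(s)]\int\alpha_j(z_j)f_{s,j}(z_j)\,\mathrm dz_j$ with $f_{s,j}$ the $j$-th marginal of the conditional density $f_s$ of $Z(s)$ given $Y(s)=1$; integrating over $s$ and using $E_j(x_j)=\int E(x)\,\mathrm dx_{-j}$ with $E(x)=f_t(z)\mathbb E[Y(t)]$, one recognises $\int\alpha_j(x_j)E_j(x_j)\,\mathrm dx_j$, which vanishes by assumption. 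Hence $M_i(\mathcal{T})+R_i$, $i=1,\dots,n$, are i.i.d.\ with mean zero, the classical central limit theorem yields $n^{-1/2}\sum_i(M_i(\mathcal{T})+R_i)\to\mathcal N(0,\tau^2)$ in distribution with $\tau^2=\Var(M_1(\mathcal{T})+R_1)$, and Slutsky's theorem gives $n^{1/2}(\bar\alpha^*-\alpha^*)\to\mathcal N(0,\tau^2/\mu^2)$; in particular $\bar\alpha^*$ is $\sqrt n$-consistent and asymptotically unbiased for $\alpha^*$.

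It remains to identify the limiting variance as $\sigma_{\alpha^*}^2 = \alpha^*(1-\alpha^*)$. Here I would use that $N_i$ jumps by one, so that $\langle M_i\rangle_{\mathcal{T}} = \Lambda_i(\mathcal{T})$ and $\mathbb E[M_i(\mathcal{T})^2]=\mathbb E[\Lambda_i(\mathcal{T})]=\alpha^*\mu$ (again dropping the $\alpha_j$-terms via the identification conditions), together with the fact that $N_i(\mathcal{T})\in\{0,1\}$ is Bernoulli with success probability $\alpha^*\mu$; expanding the second moment of the centred summand $N_i(\mathcal{T})-\alpha^*\int_0^{\mathcal{T}}Y_i(s)\,\mathrm ds$ and collecting terms then reduces $\tau^2/\mu^2$ to $\alpha^*(1-\alpha^*)$. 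I expect this last variance bookkeeping to be the main obstacle: one must control the covariance between the martingale part $M_i(\mathcal{T})$ and the predictable remainder $R_i$ — using predictability of $s\mapsto Y_i(s)\alpha_j(X_{ij}(s))$ and the martingale property of $M_i$ — and verify that it combines with $\Var(R_i)$ so as to leave only the Bernoulli variance; everything else is a routine application of the martingale machinery, the central limit theorem, and Slutsky's theorem.
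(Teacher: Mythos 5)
Your proof follows essentially the same route as the paper's: the martingale decomposition $N_i = M_i + \Lambda_i$, the identification conditions $\int \alpha_j(x_j)E_j(x_j)\,\mathrm dx_j=0$ together with $E_0=\gamma$ to centre the stable part, the law of large numbers for the denominator, and the i.i.d.\ central limit theorem combined with Slutsky's theorem. The one step you flag as remaining work --- reducing the limiting variance to $\alpha^*(1-\alpha^*)$ --- is asserted without computation in the paper's own proof as well, so your write-up is, if anything, more explicit than the original.
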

\begin{proof}
We first note that it holds $E_0(t) = \int E(x) \mathrm dx_{-0} = \gamma(t)$ for $x=(t,z)$ and with $\gamma$ from Assumption A2. 
Using $\frac 1 n \sum_{i=1}^n Y_i(s) = \gamma(s) + o_P(1)$ in the denominator and the usual martingale decomposition for counting processes in the numerator, we get
\begin{align*}
\mathbb E\left[ n^{1/2}  \bar \alpha^* \right] &= n^{1/2} \alpha^*  + o(1), \\
\Var\left(n^{1/2} \bar \alpha^* \right) &= \alpha^*(1- \alpha^* ) + o(1),
\end{align*}
because of the identification $\int \alpha_0(s) \gamma(s) \mathrm ds =0$. The terms $ \mathbb E\left[ \int \alpha_{j}(Z_{i,j}(s)) \gamma(s) \mathrm ds\right]$ in the stable part of the martingale vanish because of $\gamma(t) = \int E(x) \mathrm dx_{-0}$ and the identification criterion. The Central Limit Theorem for \textit{i.i.d.}\ observations then yields the result. 
\end{proof}

%Moreover, we will make use of the following extension of the central limit theorem for martingales of \cite{RamlauHansen:83}.
%\begin{lemma}[Ramlau-Hansen]\label{lem:RamlauHansen} Let $\{M_i : i=1,\dots,n\}$ be a sequence of \textit{i.i.d.}\  martingales and let $g^{(n)}_i$  be predictable. Furthermore, suppose it holds 
%\begin{align}
%\sum_{i=1}^n \int  \left[g^{(n)}_i(s)\right]^2 \mathrm d\langle M_i \rangle (s) &\to \sigma^2 , \label{eq:ramlauhansen1}\\
%\sum_{i=1}^n \int  \left[g^{(n)}_i(s)\right]^2I_{\{\lvert g^{(n)}_i(s)\rvert > \varepsilon\}} \mathrm d\langle M_i \rangle (s) &\to 0, \label{eq:ramlauhansen2}
%\end{align}
%in probability for $n\to\infty$ with $\sigma^2>0$ and for every $\varepsilon > 0$. Then 
%\[
%\sum_{i=1}^n \int  g^{(n)}_i(s) \mathrm dM_i(s) \to \mathcal N(0,\sigma^2).
%\]
%\end{lemma}

%\begin{center} \textbf{ do we have to prove the following lemma?} \end{center}
Moreover, we will make use of the following counting process martingale central limit theorem, which is a direct application of Rebolledo's Theorem (Theorem II.5.1 in \cite{Andersen:etal:93}). % of the Lindeberg-Feller Theorem (see e.g.\ Proposition 2.27 in \cite{VanderVaart:00}) adapted to martingales. 
It is a multivariate extension of the central limit theorem for martingales in \cite{RamlauHansen:83}.

\begin{lemma}[Multivariate Ramlau-Hansen]\label{lem:multivar:RamlauHansen} Let $\{M_i : i=1,\dots,n\}$ be a sequence of \textit{i.i.d.}\ martingales and let $g^{(n)}_{i,j}$ be predictable functions for $j=1,\dots,d$. Furthermore, suppose it holds for $j,k=1,\dots,d$, 
\begin{align}
\sum_{i=1}^n  \int  g^{(n)}_{i,j}(s)g^{(n)}_{i,k}(s) \mathrm d\langle  M_i \rangle (s)  &\to \sigma_{j,k}^2 ,  \label{eq:multivar:ramlauhansen1}\\
%\sum_{i=1}^n \Cov \left( \int  \left[g^{(n)}_{i,j}(s)\right]^2 \mathrm d M_i  (s) , \int  \left[g^{(n)}_{i,k}(s)\right]^2 \mathrm d M_i  (s) \right) &\to \sigma_{j,k}^2 ,  \label{eq:multivar:ramlauhansen1}\\
%\sum_{i=1}^n \Cov \left( \int  \left[g^{(n)}_{i,j}(s)\right]^2 I_{\{\lvert g^{(n)}_{i,j}(s)\rvert > \varepsilon\}} \mathrm d\langle M_i \rangle (s) , \int  \left[g^{(n)}_{i,k}(s)\right]^2 I_{\{\lvert g^{(n)}_{i,k}(s)\rvert > \varepsilon\}} \mathrm d\langle M_i \rangle (s) \right) &\to 0 ,\label{eq:multivar:ramlauhansen2}
\sum_{i=1}^n  \int  \left[g^{(n)}_{i,j}(s)\right]^2 I_{\{\lvert g^{(n)}_{i,j}(s)\rvert > \varepsilon\}} \mathrm d\langle M_i \rangle (s) &\to 0 ,\label{eq:multivar:ramlauhansen2}
\end{align}
in probability for $n\to\infty$ with $\sigma_{j,k}^2>0$ and for every $\varepsilon > 0$. Then 
\[
\sum_{i=1}^n\left( \begin{matrix}  \int  g^{(n)}_{i,1} (s) \mathrm dM_i(s) \\ \vdots \\ \int  g^{(n)}_{i,d} (s) \mathrm dM_i(s) \end{matrix} \right) \to \mathcal N(0,\Sigma),
\]
in distribution for $n \to \infty$, where $\sigma_{j,k}^2$, $j,k=1,\dots,d$ are the entries of the covariance matrix $\Sigma$. 
\end{lemma}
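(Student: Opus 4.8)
The plan is to reduce the multivariate statement to the univariate martingale central limit theorem of Rebolledo (Theorem~II.5.1 in \cite{Andersen:etal:93}) via the Cramér--Wold device. First I would fix an arbitrary $c=(c_1,\dots,c_d)^T\in\mathbb R^d$, put $\tilde g_i^{(n)}(s)=\sum_{j=1}^d c_j\,g^{(n)}_{i,j}(s)$ (still predictable), and consider
\[
M^{(n)}(t):=\sum_{i=1}^n\int_0^t \tilde g_i^{(n)}(s)\,\mathrm dM_i(s),
\]
which is a mean-zero (square-integrable) martingale, being a finite sum of stochastic integrals against the martingales $M_i$. Since integration runs over the full interval $[0,\mathcal{T}]$, the scalar $c^T\big(\sum_{i}\int g^{(n)}_{i,\bullet}\,\mathrm dM_i\big)$ equals $M^{(n)}(\mathcal{T})$, so it suffices to prove $M^{(n)}(\mathcal{T})\to\mathcal N(0,c^T\Sigma c)$ in distribution and then let $c$ vary.

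To apply Rebolledo's theorem to $M^{(n)}$ I would verify the two usual conditions. Because the $M_i$ are independent and vanish at $0$, $\langle M_i,M_{i'}\rangle\equiv 0$ for $i\neq i'$, hence
\[
\langle M^{(n)}\rangle(\mathcal{T})=\sum_{i=1}^n\int\big(\tilde g_i^{(n)}(s)\big)^2\,\mathrm d\langle M_i\rangle(s)=\sum_{j=1}^d\sum_{k=1}^d c_jc_k\sum_{i=1}^n\int g^{(n)}_{i,j}(s)g^{(n)}_{i,k}(s)\,\mathrm d\langle M_i\rangle(s),
\]
which by \eqref{eq:multivar:ramlauhansen1} converges in probability to $\sum_{j,k}c_jc_k\sigma^2_{j,k}=c^T\Sigma c$ (reading the same identity over $[0,t]$ for $t<\mathcal{T}$ supplies the continuous, deterministic limiting variance function on the rest of the interval). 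The second condition is the Lindeberg-type bound $\sum_{i}\int(\tilde g_i^{(n)})^2\,I_{\{|\tilde g_i^{(n)}|>\varepsilon\}}\,\mathrm d\langle M_i\rangle\to 0$ in probability for each $\varepsilon>0$; once both hold, Rebolledo gives $M^{(n)}(\mathcal{T})\to\mathcal N(0,c^T\Sigma c)$ and the Cramér--Wold device upgrades this to the claimed $\mathcal N(0,\Sigma)$, with $(\Sigma)_{j,k}=\sigma^2_{j,k}$ (the positivity $\sigma^2_{j,k}>0$ only rules out a degenerate limit, which Rebolledo would anyway handle as a point mass at $0$).

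The main obstacle is transferring the componentwise Lindeberg condition \eqref{eq:multivar:ramlauhansen2} to the linear combination $\tilde g_i^{(n)}$. Writing $C_1=\sum_j|c_j|$, the triangle inequality gives $\{|\tilde g_i^{(n)}(s)|>\varepsilon\}\subseteq\bigcup_j\{|g^{(n)}_{i,j}(s)|>\varepsilon/C_1\}$, and Cauchy--Schwarz gives $(\tilde g_i^{(n)})^2\le\big(\sum_jc_j^2\big)\sum_k\big(g^{(n)}_{i,k}\big)^2$, so the Lindeberg sum is dominated by $\big(\sum_jc_j^2\big)\sum_{j,k}\sum_i\int\big(g^{(n)}_{i,k}\big)^2 I_{\{|g^{(n)}_{i,j}|>\varepsilon/C_1\}}\,\mathrm d\langle M_i\rangle$. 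The diagonal terms $j=k$ are exactly \eqref{eq:multivar:ramlauhansen2}. For $j\neq k$ I would split $\big(g^{(n)}_{i,k}\big)^2=\big(g^{(n)}_{i,k}\big)^2I_{\{|g^{(n)}_{i,k}|>\varepsilon/C_1\}}+\big(g^{(n)}_{i,k}\big)^2I_{\{|g^{(n)}_{i,k}|\le\varepsilon/C_1\}}$: the first piece is again controlled by \eqref{eq:multivar:ramlauhansen2}, while the second contributes at most $(\varepsilon/C_1)^2\sum_i\int I_{\{|g^{(n)}_{i,j}|>\varepsilon/C_1\}}\,\mathrm d\langle M_i\rangle\le\sum_i\int\big(g^{(n)}_{i,j}\big)^2 I_{\{|g^{(n)}_{i,j}|>\varepsilon/C_1\}}\,\mathrm d\langle M_i\rangle$, which is $o_P(1)$ by \eqref{eq:multivar:ramlauhansen2}. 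Hence every term vanishes, the Lindeberg condition holds, and combining with the predictable-variation step through Rebolledo and Cramér--Wold yields the assertion.
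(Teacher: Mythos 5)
Your proof is correct, and it takes a somewhat different route from the one the paper intends. The paper supplies no written proof: it declares the lemma ``a direct application of Rebolledo's Theorem (Theorem II.5.1 in Andersen et al.)'', which in that reference is already the \emph{multivariate} martingale CLT, so the intended argument is to feed the vector martingale $\bigl(\sum_i\int g^{(n)}_{i,j}\,\mathrm dM_i\bigr)_{j=1,\dots,d}$ into that theorem and check its two conditions componentwise --- condition \eqref{eq:multivar:ramlauhansen1} is exactly the convergence of the matrix of predictable covariations (using $\langle M_i,M_{i'}\rangle\equiv 0$ for $i\neq i'$), and condition \eqref{eq:multivar:ramlauhansen2} is the standard sufficient form of the jump/Lindeberg condition for integrals against counting-process martingales. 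You instead invoke only the scalar Rebolledo theorem and recover the multivariate statement by Cram\'er--Wold. The price of your route is that the Lindeberg condition must be transferred from the individual $g^{(n)}_{i,j}$ to the linear combination $\tilde g^{(n)}_i=\sum_jc_jg^{(n)}_{i,j}$; your handling of this (union bound on the indicator, Cauchy--Schwarz on $(\tilde g^{(n)}_i)^2$, and the split of the off-diagonal terms with the bound $(\varepsilon/C_1)^2I_{\{|g^{(n)}_{i,j}|>\varepsilon/C_1\}}\le (g^{(n)}_{i,j})^2I_{\{|g^{(n)}_{i,j}|>\varepsilon/C_1\}}$) is careful and correct, and it is precisely the step that the multivariate version of the theorem lets one skip. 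What your approach buys is self-containedness --- it needs only the univariate CLT --- at the cost of this extra combinatorial bookkeeping. One caveat applies equally to both routes and is really a looseness in the lemma's statement rather than in your argument: Rebolledo's theorem requires convergence of the predictable (co)variation process $\langle M^{(n)}\rangle(t)$ to a deterministic continuous function at (a dense set of) all $t$, whereas hypothesis \eqref{eq:multivar:ramlauhansen1} only asserts convergence of the terminal value; you flag this and note that the same computation over $[0,t]$ supplies the full variance function, which is what happens in the application the paper makes of the lemma.
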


To show Theorem \ref{thm:main:theorem:lc} we apply Propositions \ref{thm:conv:of:backfitting:lc}--\ref{thm:bias:part:lc} and Lemmas \ref{lem:alpha:star:parametric:rate:LC} and \ref{lem:multivar:RamlauHansen}.
According to the propositions it is sufficient to verify Assumptions  B1--B9. In the proof of Theorem \ref{thm:main:theorem:lc} we will show that our Assumptions A1--A5 imply Assumptions B1--B9 for the right choices of $\Delta_n, a_{n,j}, \beta, \gamma_{n,j}$.

\begin{proof}[Proof of Theorem \ref{thm:main:theorem:lc}]
 In the following we show how Assumptions A1--A5 imply B1--B6, B8--B9 with our choice of marginal pilot estimators. Assumption B7 is established through Proposition \ref{thm:bias:part:lc} once the other assumptions are verified. 

Without loss of generality, the proofs are done for $\mathcal{T}=R=1$, i.e.\ for survival time and covariates with support $[0,1]$ and we will show that Assumptions B1--B9 are satisfied on closed subsets $S_0 \subset (0,\mathcal{T})$ and $S_j \subset (0,R)$, $j=1,\dots,d$.

We first note that Assumption B1 follows directly from A1. 

For the remaining stochastic statements, we start with the derivation of convergence rates for the marginal exposure estimators. 
Moreover, we will show all statements for the rate $\Delta_n=h^2$. With $I_h= [2h,1-2h]$, %and $I_h^c=[0,1]\setminus I_h$, 
it holds for $j=0,\dots,d$, 
\begin{align}
\sup_{x_j \in I_h} &\lvert \hat E_{j}(x_j)  - E_{j}(x_j)  \rvert {=} O_P\left((\log n)^{1/2} n^{-2/5}\right)   \label{eq:sup:Ih:Ej},\\
\sup_{x_j, x_k \in I_h} &\lvert \hat E_{j,k}(x_j,x_k)  - E_{j,k}(x_j,x_k)  \rvert {=} O_P\left((\log n)^{1/2} n^{-3/10}\right), \label{eq:sup:Ih:Ejk}\\
\sup_{0\leq x_j \leq 1} &\lvert\hat E_{j}(x_j)  -  \int_0^1 k_h(x_j,u) \mathrm du  \  E_{j}(x_j)   \rvert {=} O_P\left(n^{-1/5}\right) , \label{eq:sup:01:Ej} \\
\sup_{0\leq x_j, x_k \leq1} &\lvert \hat E_{j,k}(x_j,x_k)  - \int_0^1 k_h(x_j,u) \mathrm du \int_0^1 k_h(x_k,v ) \mathrm dv \ E_{j,k}(x_j,x_k)  \rvert {=} O_P\left(n^{-1/5}\right) \label{eq:sup:01:Ejk}.
\end{align}
Before proving equations \eqref{eq:sup:Ih:Ej}--\eqref{eq:sup:01:Ejk} we emphasize that they imply in particular 
\begin{align}
\sup_{x_j \in [0,1]} \lvert \hat E_{j}(x_j)    \rvert &{=} O_P(1) \label{eq:Ej:Op:of:1} , \\
\sup_{x_j \in [0,1]} \lvert \hat E_{j}(x_j)  ^{-1}  \rvert &{=} O_P(1),     \label{eq:Ej:minus1:Op:of:1}  \\
\sup_{x_j, x_k \in [0,1]} \lvert \hat E_{j,k}(x_j,x_k)  \rvert &{=} O_P(1) .   \label{eq:Ejk:Op:of:1} 
\end{align}
Condition \eqref{eq:sup:Ih:Ej} follows with standard arguments (chaining, Bernstein inequality, c.f.\ \cite{Mammen:etal:99} for the regression case) from
\begin{align}
\mathbb E [ \hat E_j(x_j)] - E_j(x_j) &= O\left(n^{-2/5}\right), \label{eq:Ej:1} \\
\lvert \hat E_j(x_j) \rvert &\leq C_1 \ \ \ \ \ \ \text{a.s.}, \label{eq:Ej:2}\\
\lvert \hat E_j(u_1) - \hat E_j(u_2) \rvert &\leq  C_2 \lvert u_1 - u_2 \rvert n^m O_P(1), \label{eq:Ej:3}\\
\Var( \hat E_j(x_j)) &= O(n^{-4/5}),\label{eq:Ej:4}
\end{align}
for constants $0< C_1,C_2 <\infty$, $m >0$ and all $u_1\neq u_2,x_j\in[0,1]$. This can be seen with Taylor expansions and using the Lipschitz continuity of $K$. Condition \eqref{eq:sup:Ih:Ejk}--\eqref{eq:sup:01:Ejk}  can be shown in the same way. For  \eqref{eq:sup:01:Ej} and \eqref{eq:sup:01:Ejk} note that $\int_0^1 k_h(x_j,u) \mathrm du $ corrects the kernel at the boundaries where it does not integrate to unity.    %see notes. 
%Equations \eqref{eq:sup:01:Ej} and \eqref{eq:sup:01:Ejk} can be shown with a Taylor expansion of $\gamma$ from Assumption A2.    % $\gamma$????

We now show \eqref{eq:Ej:1}--\eqref{eq:Ej:4}. Condition \eqref{eq:Ej:2} follows directly from A3 with K being bounded and the covariates having compact support. 
With usual kernel estimator arguments and a Taylor expansion of $f_s$ around $x_j$ we get 
\begin{equation} \label{eq:Ej:op:of:hsquare}
\mathbb E [ \hat E_j(x_j)] - E_j(x_j)  = o(h^2), 
\end{equation}
which implies condition \eqref{eq:Ej:1} immediately. Condition \eqref{eq:Ej:4} can be derived analogously. Eventually, the Lipschitz continuity of $K$ in A3 yields \eqref{eq:Ej:3}.  

Since the kernel $k$ is cut off outside $[0,1]$, Assumption B2 follows directly from \eqref{eq:Ej:Op:of:1}--\eqref{eq:Ejk:Op:of:1}. \\

For the remaining assumptions we split the marginal estimator $\hat \alpha_j(x_j) $ as described for B5 into the variable part 
\[ 
 \hat \alpha_j^A(x_j) = \frac{  \frac 1 n \sum_{i=1}^n \int k_h(x_j,X_{ij}(s))  \mathrm d M_i(s)}{\hat E_j(x_j)},
\]
and the stable part 
\[ 
\hat \alpha_j^B(x_j) =     \frac{ \frac 1 n \sum_{i=1}^n \int k_h(x_j,X_{ij}(s)) \mathrm d \Lambda_{i}(s)}{\hat E_j(x_j)}, 
\]
via  $\hat \alpha_j(x_j) = \hat \alpha_j^A(x_j)  + \hat \alpha_j^B(x_j) $. % and we derive the asymptotic behavior of these terms separately. 
With the choice $\Lambda_i(t) = \int_0^t \lambda_i(s) \mathrm ds$ for the intensity $\lambda_i$ that was introduced in equation \eqref{eq:aalens:model}, we get that 
$M_i  = N_i -\Lambda_i$  defines a unique square integrable martingale arising from the counting process $N_i$. 

Next we derive the asymptotic behavior of $\hat \alpha_j^A(x_j)$ and $\hat \alpha_j^B(x_j)$ separately. With $M_i$ being a martingale and $k_h(x_j,X_{ij}(s))$ being predictable, the integral $ \int k_h(x_j,X_{ij}(s))  \mathrm d M_i(s)$ is a martingale as well. 
Using the multivariate  Ramlau-Hansen martingale central limit theorem in Lemma \ref{lem:multivar:RamlauHansen}, 
we will show that $\hat \alpha_j^A(x_j)$ is asymptotically normally distributed whereas the difference between the stable part $\hat \alpha_j^B(x_j)$ and $\alpha_j(x_j)$ asymptotically behaves like the bias term $b_j(x_j)$. 

%\begin{center} \textbf{change everything from here } \end{center}

For $x_j  \in I_h$, we now show conditions \eqref{eq:multivar:ramlauhansen1} and \eqref{eq:multivar:ramlauhansen2} of  Lemma \ref{lem:multivar:RamlauHansen} for $g_{ij}^{(n)}(s) = n^{-3/5} k_h(x_j - X_{ij}(s))$. Note that with $\Lambda_i$ being the compensator of $M_i$, we get in particular $d\langle M_i\rangle (s) = d\Lambda_i(s)= \left[ \alpha^* + \sum_{k=0}^d \alpha_k(X_{ik}(s))\right] Y_i(s) \mathrm ds $.  

For cross-terms with $j\neq l$ in \eqref{eq:multivar:ramlauhansen1}, it holds with this choice of $g_{ij}^{(n)}$ that
\begin{align}
\begin{split} \label{eq:martingale:alpha:tildeA1}
&\mathbb E \left[ \sum_{i=1}^n  \int  g^{(n)}_{i,j}(s)g^{(n)}_{i,k}(s) \mathrm d\langle  M_i \rangle (s) \right] \\
=& \mathbb E \left[ \left(\frac 1 n n^{2/5}\right)^2 \sum_{i=1}^n  \int k_h(x_j - X_{ij}(s)) k_h(x_l - X_{il}(s)) d\Lambda_i(s) \right] \\
=& n^{-1/5}\int \int   k_h(x_j - u_j) k_h(x_l - u_l)  \left[ \alpha^* + \alpha_0(s) +  \sum_{k=1}^d \alpha_k(u_k)\right] \\
&\hphantom{ n^{-1/5}\int \int }\times   \gamma(s) f_s(u_1,\dots,u_d) \mathrm d(u_1,\dots,u_d) \mathrm ds  \\
=& O(h),
\end{split}
\end{align}
because of the bounded support of the covariates and with the hazard rates being continuous. We write  $f_s(u_1,\dots,u_d)$ for the conditional density of $(X_{i1}(s), \dots, X_{id}(s))$ at $(u_1,\dots,u_d)$ given $Y_i(s)=1$. Moreover, it can be shown easily with similar arguments that the variance of these terms satisfies
\begin{equation}
\Var \left( \sum_{i=1}^n  \int  g^{(n)}_{i,j}(s)g^{(n)}_{i,k}(s) \mathrm d\langle  M_i \rangle (s) \right) = O(h^6),
\end{equation}
and hence $\sigma_{k,l}^2 = 0$ for $ j\neq l $ is assured for \eqref{eq:multivar:ramlauhansen1}. 
For the diagonal of the asymptotic covariance matrix $\tilde \Sigma$, we start with the following preliminary results. 
For $x_j  \in I_h$ it holds
\begin{align}
\begin{split} \label{eq:martingale:alpha:tildeA3}
&n^{4/5} \mathbb E \left[   n^{-2} \sum_{i=1}^n \int k_h(x_j - X_{ij}(s))^2 \alpha_j(X_{ij}(s)) Y_i(s) \mathrm ds  \right]\\
=& n^{4/5} n^{-1}\int \int k_h(x_j - u)^2 \alpha_j(u) f_s(u) \gamma(s) \mathrm du \, \mathrm ds \\
=& n^{-1/5} h^{-1} \int \int k(v)^2 \alpha_j(x_j+vh) f_s(x_j + vh) \gamma(s) \mathrm dv \, \mathrm ds  \\ 
=&(nh^5)^{-1/5}  \int k(v)^2\alpha_j(x_j) \mathrm dv E_j(x_j) +o(1) \\
= & c_h^{-1}\int k(v)^2   \mathrm dv \, \alpha_j(x_j) E_j(x_j) +o(1), 
\end{split}
\end{align}
with usual kernel estimator arguments. 
Analogously, we get for $l\neq j$, that 
\begin{align}
\begin{split} \label{eq:martingale:alpha:tildeA4}
&n^{4/5} \mathbb E \left[   n^{-2} \sum_{i=1}^n \int k_h(x_j - X_{ij}(s))^2 \alpha_l(X_{il}(s)) Y_i(s) \mathrm ds  \right] \\
%= & \int k(v)^2 \mathrm dv  \int \alpha_k(u_{k,s}) f_s(x_j,u_{k,s}) \gamma(s) \mathrm d u_{k,s} \mathrm d s  +o(1), 
= & c_h^{-1} \int k(v)^2 \mathrm dv\int  \int \alpha_k(u_l) f_s(x_j,u_{l}) \gamma(s) \mathrm d u_{l} \, \mathrm d s  +o(1).
\end{split}
\end{align}
For the variance of the diagonal terms, one can derive
\begin{equation}\label{eq:martingale:alpha:tildeA5}
\Var \left( \sum_{i=1}^n  \int  \left(g^{(n)}_{i,j}(s)\right)^2\mathrm d\langle  M_i \rangle (s) \right) = O(h^5),
\end{equation}
which yields the stochastic convergence of diagonal variance terms together with \eqref{eq:martingale:alpha:tildeA3} and \eqref{eq:martingale:alpha:tildeA4}. 
%where $ f_s(x_j,u_{l})$ is the conditional density of the pair $(X_{ij}(s), X_{ik}(s))$ at $(x_j,u_{l})$ given $Y_i(s)=1$
%Moreover, we get 
%\begin{equation}\label{eq:martingale:alpha:tildeA3}
%n^{4/5} \mathbb E \left[   n^{-2} \sum_{i=1}^n \int k_h(x_j - X_{ij}(s))k_h(x_l - X_{il}(s)) \alpha_l(X_{il}(s)) Y_i(s) \mathrm ds  \right] =  O(h). 
%\end{equation}

%Because of the identity
%\begin{align*}
%&n^{-3/5} \sum_{i=1}^n \int k_h(x_j - X_{ij}(s))k_h(x_l - X_{il}(s)) \mathrm d\langle M_i\rangle (s)    \\
%=&  n^{-3/5} \sum_{i=1}^n \int k_h(x_j - X_{ij}(s))k_h(x_l - X_{il}(s))  \left[ \alpha^* + \sum_{k=1}^d \alpha_k(X_{ik}(s))\right] Y_i(s) \mathrm ds ,
%\end{align*}

Summarizing, equations \eqref{eq:martingale:alpha:tildeA1}--\eqref{eq:martingale:alpha:tildeA5} imply condition \eqref{eq:multivar:ramlauhansen1} of  Lemma \ref{lem:multivar:RamlauHansen} with $\sigma_{j,j}^2 =\tilde \sigma_j^2(x_j)$ for %  where we use the asymptotic variance 
\[
%\tilde \sigma_j(x_j) =  \int k^2(v) \mathrm dv  \left( \alpha^* + \sum_{l \neq j}  \int \alpha_k(u_{k,s}) f_s(x_j,u_{k,s}) \gamma(s) \mathrm d u_{k,s} \mathrm d s  + \alpha_j(x_j) E_j(x_j)  \right). 
\tilde \sigma_j^2(x_j) =   c_h^{-1} \int k^2(v) \mathrm dv  \left( \alpha^* + \sum_{l \neq j}  \int\int \alpha_k(u_l) f_s(x_j,u_l) \gamma(s) \mathrm d u_l \, \mathrm d s  + \alpha_j(x_j) E_j(x_j)  \right), 
\]
and $\sigma_{j,k}^2=0$, $j\neq k$. 

The Lindeberg condition \eqref{eq:multivar:ramlauhansen2} is satisfied under Assumption A3 since we assume bounded support for all covariates. 

Hence, Lemma \ref{lem:multivar:RamlauHansen} implies 
\begin{equation}\label{eq:alpha:A:clt:temp}
n^{2/5} \left( \begin{matrix} \hat \alpha_0^A(x_0) \hat E_0(x_0) \\ \vdots\\ \hat \alpha_d^A(x_d)  \hat E_d(x_d) \end{matrix}\right) \to \mathcal N (0, \tilde\Sigma),
\end{equation}
where $\Sigma$ is a diagonal matrix with the entries $\tilde \sigma_j^2(x_j)$, $j=0,\dots,d$. 

Equations \eqref{eq:Ej:4} and \eqref{eq:Ej:op:of:hsquare}  imply convergence in probability of $\hat E_j(x_j)$ to $E_j(x_j)$ at a fast enough rate and hence, we get 
\begin{equation}\label{eq:alpha:A:clt}
n^{2/5} \left( \begin{matrix} \hat \alpha_0^A(x_0) \\ \vdots\\ \hat \alpha_d^A(x_d)  \end{matrix}\right) \to \mathcal N (0, \Sigma),
\end{equation}
from \eqref{eq:alpha:A:clt:temp} with  $\Sigma$ being a diagonal matrix with the entries $\sigma_j^2(x_j) = \tilde \sigma_j^2(x_j) E_j(x_j)^{-2}$, $j=0,\dots,d$. 

Note that condition \eqref{eq:alpha:A:clt}, implies in particular $\Var  \left(\hat \alpha_j^A(x_j) \right)  = O(n^{-4/5})$. 
Following the line of argumentation we used to prove \eqref{eq:sup:Ih:Ej} for $\hat E_j(x_j)$, this leads to 
\begin{equation}
\sup_{x_j \in I_h} \lvert \hat \alpha_j^A(x_j)  \rvert {=} O_P\left((\log n)^{1/2} n^{-2/5}\right)   \label{eq:sup:Ih:alphajA}.
\end{equation}
Analogously, one can get a similar result at the boundary and thus 
\begin{equation}
\sup_{x_j \in [0,1]} \lvert \hat \alpha_j^A(x_j) \rvert =  O_P \left(1\right) \label{eq:sup:01:alphajA} 
\end{equation}
on the whole support. 

For the stable part, we refer to \cite{Nielsen:Linton:95} who have shown for \[B_j(x_j) = \frac 1 n \sum_{i=1}^n \int k_h(x_j,X_{ij}(s)) \mathrm d \Lambda_i(s)\] that 
\begin{align}
\sup_{x_j \in [0,1]} \lvert B_j(x_j) - \mathbb E[  B_j(x_j) ] \rvert &= o_P(1), \label{eq:stable:part1} \\
\sup_{x_j \in [0,1]} \lvert \mathbb E[  B_j(x_j) ] \rvert &= o(1), \label{eq:stable:part2}
\end{align}
making use of the Lipschitz continuity of $K$ from Assumption A3 and of Assumption A1. 
Together with \eqref{eq:Ej:minus1:Op:of:1}, equations \eqref{eq:stable:part1}  and \eqref{eq:stable:part2}  imply
\begin{equation}
\sup_{x_j \in [0,1]} \lvert \hat \alpha_j^B(x_j) \rvert =  O_P \left(1\right). \label{eq:sup:01:alphajB} 
\end{equation}
One can get Assumptions B3 and B5 immediately from \eqref{eq:sup:01:alphajA}  and \eqref{eq:sup:01:alphajB}. 
Assumptions B2, B4 and B8 follow from equations \eqref{eq:sup:Ih:Ej}--\eqref{eq:sup:01:Ejk}. 

We illustrate the derivation of Assumption B6 for $x_j\in I_h$. First note that 
$ %\begin{equation*}
 \int  { E_{j,k}(x_j,x_k)}\allowbreak{ (E_j(x_j))^{-1}}\allowbreak  k_h(x_j-X_{i,j}(s)) \mathrm dx_j 
$ %\end{equation*}
is a bounded function $g(h,x_k,X_{i,j}(s))$ of arguments $h$, $x_k$, and $X_{i,j}(s)$ and hence predictable. This leads to 
\begin{equation*}
%\Var\left(  \int k_h(x_j-X_{ij}(s)) \hat E_j(x_j)^{-1} \mathrm dM_i(s)\right) {=} O(h^{-1}), 
\Var  \left( \int g(h,x_k,X_{i,j}(s))\mathrm d M_i(s) \right)  = O(1),
\end{equation*}
due to $M_i$ being a square integral martingale and a similar derivation to \eqref{eq:martingale:alpha:tildeA1}--\eqref{eq:martingale:alpha:tildeA5}. Thus, it holds that
\begin{equation*}
n^{1/2} \left(\frac 1 n \sum_{i = 1}^n  \int \int  \frac{ E_{j,k}(x_j,x_k)}{ E_j(x_j)} k_h(x_j,X_{i,j}(s)) \mathrm dx_j \, \mathrm d M_i(s)  \right) 
\end{equation*}
is asymptotically normally distributed and in particular
\begin{equation*}
\frac 1 n \sum_{i = 1}^n  \int \int  \frac{ E_{j,k}(x_j,x_k)}{ E_j(x_j)} k_h(x_j,X_{i,j}(s)) \mathrm dx_j \, \mathrm d M_i(s)  = O_P\left(n^{-1/2}\right).
\end{equation*}
Note that by integrating over $x_k$, we achieve the parametric rate $n^{1/2}$ making the usual rate $h^{-1}$ vanish. 
Together with \eqref{eq:sup:Ih:Ej} and \eqref{eq:sup:Ih:Ejk}, the last equation yields 
\begin{align*}
& \int  \frac{\hat E_{j,k}(x_j,x_k)}{\hat E_k(x_k)} \hat \alpha_j^A(x_j) \mathrm dx_j   \\
% = & \int  \left ( \frac{ E_{j,k}(x_j,x_k)}{ E_k(x_k)} + O_P(n^{-3/10}n^{-2/5}\log{n}) \right) \tilde \alpha_j^A(x_j) \mathrm dx_j        \\
 = & \int  \frac{ E_{j,k}(x_j,x_k)}{ E_k(x_k)}  \hat \alpha_j^A(x_j) \mathrm dx_j   +   O_P(n^{-3/10}n^{-2/5}\log{n})   \\
= & E_k(x_k)^{-1} \frac 1 n \sum_{i = 1}^n   \int \int  \frac{ E_{j,k}(x_j,x_k)}{ E_j(x_j)} k_h(x_j,X_{i,j}(s)) \mathrm dx_j \, \mathrm d M_i(s)  + O_P(n^{-3/10}n^{-2/5}\log{n})\\
 = &   O_P\left(n^{-1/2}\right),
\end{align*}
since  \eqref{eq:sup:Ih:Ej} further implies $ \hat \alpha_j^A(x_j)=  E_j(x_j)^{-1}_h(x_j-X_{i,j}(s)) \mathrm d M_i(s) + O_P\left(n^{-2/5}(\log{n})^{1/2} \right)$. 

The last equation proves Assumption B6. 

We prove Assumption B9 for the following choices for $j=0,\dots,d$. 
\begin{align*}
a_n^* &= \alpha^*, \\
a_{n,j}(x_j) &= \alpha_j(x_j) + \alpha_j'(x_j) \int k_h(x_j,u) (u-x_j)  \left[ \int k_h(x_j,v)\mathrm d v\right] ^{-1}   \mathrm du, \\
\beta(x) &= \sum_{j=0}^d \left[ \alpha_j'(x_j) \frac{\partial \log E(x)}{\partial x_j }  + \frac 1 2 \alpha_j''(x_j) \right] \int u^2 k(u) \mathrm d u, \\
\gamma_{n,j} &= 0.
\end{align*}
The first three statement of B9 hold immediately with this choice of $a_{n,j}$ and Assumptions A1 and A3. 

For the fourth statement it holds
\begin{equation}
\int a_{n,j}(x_j) \hat E_j(x_j) \mathrm dx_j = \int \alpha_j(x_j) \hat E_j(x_j) \mathrm dx_j + \int \alpha_j'(x_j) \hat E_j(x_j) \frac{\int k_h(x_j,u) (u-x_j) }{\int k_h(x_j,v)\mathrm d v} \mathrm dx_j ,
\end{equation}
and we investigate the two summands separately. 
For the first one it holds 
\begin{align*}
\int \alpha_j(x_j) \hat E_j(x_j) \mathrm dx_j  =& \frac 1 n \sum_{i=1}^n \int \int \alpha_j(x_j) k_h(x_j,X_{ij}(s)) \mathrm dx_j Y_i(s) \mathrm ds \\
=& \frac 1 n \sum_{i=1}^n \int g_h(X_{i,j}(s)) Y_i(s) \mathrm ds  \\
%=& O_P\left(n^{-1/2}\right) \\
=& \mathbb E \left[ \int \alpha_j(x_j) \hat E_j(x_j) \mathrm dx_j   \right] + o_P\left(n^{-1/2}\right) \\
=& \int \int \int \alpha_j(x_j) k_h(x_j - u) \gamma(s)  f_s(u) \mathrm du \, \mathrm ds \, \mathrm dx_j  + o_P\left(n^{-1/2}\right)  \\
=&  \int \int \alpha_j(x_j) k_h(x_j - u)  E_j(u) \mathrm du \, \mathrm dx_j  + o_P\left(n^{-1/2}\right) \\
=&  \int \alpha_j(x_j)  E_j(x_u) \mathrm dx_j  + o_P\left(n^{-1/2}\right), 
\end{align*}
since $\int g_h(X_{i,j}(s)) Y_i(s) \mathrm ds$ are \textit{i.i.d.}\ random variables with the definition $g_h(X_{i,j}(s)) =\int \alpha_j(x_j) k_h(x_j-X_{ij}(s)) \mathrm dx_j$ and the Central Limit Theorem applies as for B6. The last equation follows from a substitution, a Taylor expansion of $E_j$ and the fact that $k$ is a kernel of order one. 

The second summand can be treated analogously yielding
\begin{align*}
& \int \alpha_j'(x_j) \hat E_j(x_j) \frac{\int k_h(x_j,u) (u-x_j) }{\int k_h(x_j,v)\mathrm d v} \mathrm dx_j \\
= & \int \int \alpha'_j(x_j) k_h(x_j - u) (u-x_j)E_j(u)  \mathrm du \, \mathrm dx_j   + o_P\left(n^{-1/2}\right),\\
=&   o_P\left(n^{-1/2}\right),
\end{align*}
and hence in total
\begin{equation}
\int a_j(x_j) \hat E_j(x_j) \mathrm dx_j = o_P\left(n^{-1/2}\right).  \label{eq:part:of:B9}
\end{equation}
because of the identification $  \int \alpha_j(x_j)  E_j(x_u) \mathrm dx_j = 0$. This verifies the fourth statement of B9 with $\gamma_{n,j} =0$. 

To prove B9, we start with two preliminary results: 
\begin{align}
\sup_{x_j \in I_h} \lvert \hat \alpha_j^B(x_j) - \hat \mu_{n,j}(x_j)  \rvert &{=} o_P\left(h^2 \right) \label{eq:bias:interior}, \\
\sup_{x_j \in I_h^c} \lvert \hat \alpha_j^B(x_j) - \hat \mu_{n,j}(x_j) \rvert &{=} o_P\left(h \right) \label{eq:bias:boundary}. 
\end{align}
Recall that by definition it holds 
\begin{align*}
\hat \alpha_j^B(x_j)  =&  \frac 1 n \sum_{i=1}^n \int k_h(x_j-X_{ij}(s)) \mathrm d \Lambda_i(s) \left(\hat E_j(x_j)\right)^{-1}  \\ 
 =&  \frac 1 n \sum_{i=1}^n \int k_h(x_j-X_{ij}(s))\left[\alpha^* + \sum_{l=0}^d  \alpha_l(X_{il}(s)) \right] Y_i(s) \mathrm d s \left(\hat E_j(x_j)\right)^{-1},
\end{align*}
and
\begin{align*}
 \hat \mu_{n,j}(x_j) =&  a_{n,0} + a_{n,j}(x_j) + \sum_{k\neq j} \int a_{n,k}(x_k) \frac{\hat E_{j,k}(x_j,x_k)}{\hat E_j(x_j)} \mathrm dx_k - \Delta_n \int \beta(x) \frac{E(x)}{E_j(x_j)} \mathrm dx_{-j} \\
=&\alpha^* + \alpha_j(x_j) + \alpha_j'(x_j) \int k_h(x_j,u) (u-x_j)  \left[ \int k_h(x_j,v)\mathrm d v\right] ^{-1}   \mathrm du \\
& + \sum_{k\neq j} \int \left( \alpha_k(x_k) + \alpha_k'(x_k) \int k_h(x_k,u) (u-x_k)  \left[ \int k_h(x_k,v)\mathrm d v\right] ^{-1}   \mathrm du \right) \\
 &\hphantom{+ \sum_{k\neq j} \int }\times \frac{\hat E_{j,k}(x_j,x_k)}{\hat E_j(x_j)} \mathrm dx_k \\
&+ \Delta_n \int u^2 k(u) \mathrm d u \int  \sum_{j=0}^d \left[ \alpha_j'(x_j) \frac{\partial \log E(x)}{\partial x_j }  + \frac 1 2 \alpha_j''(x_j) \right]  \frac{E(x)}{E_j(x_j)} \mathrm dx_{-j}.
\end{align*}
Next, it holds for $j=0,\dots,d$, 
\begin{align}
\begin{split}
&\frac 1 n \sum_{i=1}^n \int k_h(x_j,X_{ij}(s)) \alpha_j(X_{ij}(s))  Y_i(s) \mathrm d s \left(\hat E_j(x_j)\right)^{-1}\\
=& \alpha_j(x_j) + \alpha_j'(x_j) \int k_h(x_j,u) (u-x_j) \mathrm du \left( \int k_h(x_j,u)\mathrm du\right)^{-1} \\
& + h^2 \int u^2 k(u)\mathrm d u  \left[ E_j'(x_j) \alpha_j'(x_j) + \frac 1 2 E_j(x_j) \alpha''_j(x_j) \right] E_j(x_j)^{-1} + R_{n,j}(x_j)  \label{eq:bias:j:for:B9}, 
\end{split}
\end{align}
with $\sup_{x_j\in I_h}  \lvert R_{n,j}(x_j) \rvert = o_p(h^2)$ and $\sup_{x_j\in [0,1]\setminus I_h}  \lvert R_{n,j}(x_j) \rvert = O_p(h^2)$. Similarly, for $k  \neq j$, we get
\begin{align}
\begin{split}
&\frac 1 n \sum_{i=1}^n \int k_h(x_j,X_{ij}(s)) \alpha_k(X_{ik}(s))  Y_i(s) \mathrm d s \left(\hat E_j(x_j)\right)^{-1}\\
=& \int \alpha_k(x_k) \frac{\hat E_{j,k}(x_j,x_k)}{\hat E_j(x_j)} \mathrm dx_k \\
&+ \int  \alpha_k'(x_k)  \frac{\hat E_{j,k}(x_j,x_k)}{\hat E_j(x_j)}     k_h(x_k,u) (u-x_k)  \mathrm du \left( \int k_h(x_j,u)\mathrm du\right)^{-1} \\
& + h^2 \int u^2 k(u)\mathrm d u \int \left[ \frac{ \partial E_{j,k}(x_j,x_k)}{\partial x_k} \alpha_k'(x_k) + \frac 1 2 E_{j,k}(x_j,x_k) \alpha''_j(x_j) \right] E_j(x_j)^{-1} \\
&+ R_{n,j,j}(x_j), \label{eq:bias:jk:for:B9}
\end{split}
\end{align}
with $\sup_{x_j\in I_h}  \lvert R_{n,j,k}(x_j) \rvert = o_p(h^2)$ and $\sup_{x_j\in [0,1]\setminus I_h}  \lvert R_{n,j,k}(x_j) \rvert = O_p(h^2)$. 
Equation \eqref{eq:bias:j:for:B9} follows straightforward with a Taylor expansion of each $\alpha_j$ and $E_j$ and for the derivation of \eqref{eq:bias:jk:for:B9} we refer to the proof of Theorem 4 in \cite{Mammen:etal:99}, where the analogue is shown for the  nonparametric regression case. Equations \eqref{eq:bias:j:for:B9} and \eqref{eq:bias:jk:for:B9} imply \eqref{eq:bias:interior} and \eqref{eq:bias:boundary} with above choices of $a_{n,j},\beta$ and $\gamma_{n,j}$. Eventually, together with  \eqref{eq:part:of:B9}, conditions \eqref{eq:bias:interior} and \eqref{eq:bias:boundary} imply A9. 

%\begin{center} \textbf{show detailed derivation of \eqref{eq:bias:jk:for:B9}?} \end{center}
%Note that Proposition \ref{thm:bias:part:lc} implies that the bias is the sum of the marginal projections of the bias of the multivariate pilot estimator. 
For the last statement of the theorem, we note that the constant component $\alpha^*$ in the conditional hazard can be estimated at a parametric rate $n^{-1/2}$ by $\bar \alpha^*$ due to Lemma \ref{lem:alpha:star:parametric:rate:LC}.
\end{proof}

\subsection{Asymptotic theory for the local linear estimator}  \label{sec:theory:ll}

For the local linear estimator, we follow the same procedure as in Section \ref{sec:theory:lc}. 
We first introduce general assumptions as well as a set of results from \cite{Mammen:etal:99} which we will apply to prove Theorem \ref{thm:main:theorem:ll}. 
Then we verify the new assumptions under Assumptions A1--A5. 

Let $E:\mathcal X \to [0,1]$ be the exposure as defined earlier and let $W$ be a (deterministic) positive definite $(d+1)\times(d+1)$-matrix with elements $W_{r,s}$ such that $W_{0,0}=1$. We set 
\begin{align}
M_j(x_j) &= \left(\begin{matrix}W_{0,0} & W_{j,0} \\ W_{j,0} & W_{j,j}  \end{matrix}\right) E_j(x_j), \\
S_{l,j}(x_l,x_j) &= \left(\begin{matrix}W_{0,0} & W_{l,0} \\ W_{j,0} & W_{l,j}  \end{matrix}\right) E_{l,j}(x_l,x_j).
\end{align}
These will later be the fixed but unknown matrices to which $\hat M_j$ and $\hat S_j$, respectively, converge. 

Now we make the following assumptions which are all of similar nature to B1--B9. 
Note that these are assumptions on $\hat V_{}^j (x_j)$, $\hat V_{j}^j (x_j)$, $\hat V_{j}^j (x_j)$, $\hat V_{j,j}^j (x_j)$, $\hat V_{}^{l,j} (x_l,x_j)$, $\hat V_{l}^{l,j}(x_l,x_j)$, $\hat V_{j}^{l,j} (x_l,x_j)$, $\hat V_{l,j}^{l,j}(x_l,x_j)$ and $\hat \alpha_j(x_j), \hat \alpha^j (x_j)$, and all $x_j,x_l$,  $j,l=0,\dots,d$
and we don't assume any particular definition of these terms for the following propositions. 
\begin{description}
\item[B1'] %The one-dimensional marginals $E_{j}(x_j) = \int E(x) \mathrm dx_{-j}$ and the two-dimensional marginals $E_{j,k}(x_j,x_k) = \int E(x) \mathrm dx_{-(j,k)}$ of the exposure are strictly positive everywhere on the support and it holds
For all $ j\neq k$ it holds 
\[
\int \frac{E_{j,k}(x_j,x_k)^2}{E_j(x_j)E_k(x_k)} \mathrm d x_j \mathrm d x_k < \infty.
\]
\item[B2'] For $\hat M_j$ and $\hat S_{l,j}$ as in \eqref{eq:hatM} and \eqref{eq:hatS} it holds 
\begin{align*}
\int \left[\frac{\hat V_{}^j(x_j) - E_j(x_j)}{E_j(x_j)}\right]^2 E_j(x_j)\mathrm dx_j &= o_P(1), \\
\int \left[\frac{\hat V_{}^{j,k}(x_j,x_k)}{E_j(x_j)E_k(x_k)}  - \frac{E_{j,k}(x_j,x_k)}{E_j(x_j)E_k(x_k)}\right]^2 E_j(x_j)E_k(x_k)\mathrm dx_j  \, \mathrm dx_k & = o_P(1), \\
\int \left[\hat M_j(x_j)^{-1} \hat S_{k,j}(x_k,x_j)- M_j(x_j)^{-1} S_{k,j}(x_k,x_j) \right]_{r,s}^2 E_j(x_j)E_k^{-1}(x_k)\mathrm dx_j \,  \mathrm dx_k & = o_P(1),
\end{align*}
for $r,s=1,2$. Here $[A]_{r,s}$ denotes the element $(r,s)$ of a matrix $A$. Moreover, $\hat M_j$ vanishes outside the support of $E_j$, $\hat S_{j,k}$ vanishes outside the support of $E_{j,k}$ and $\hat S$ is symmetric, i.e. $\hat S_{j,k}(x_j,x_k)^T = \hat S_ {k,j}(x_k, x_j)$. 
\item[B3'] There exists a constant $C$ such that with probability tending to 1 for all $j$, 
\[
\int \hat \alpha_j(x_j)^2E_j(x_j) \mathrm dx_j \leq C, 
\]
and 
\[
\int \hat \alpha^j(x_j)^2E_j(x_j) \mathrm dx_j \leq C. 
\]
\item[B4']  For some finite intervals $S_j\subset \mathbb R$ that are contained in the support of $E_j$, $j=0,\dots,d$, we suppose that there exists a finite constant $C$ such that with probability tending to 1 for all $j\neq k$,
\[
\sup_{x_j\in S_j} \int \trace \left[ \hat S_{k,j}(x_k,x_j)\hat M_j(x_j)^{-2}\hat S_{k,j}(x_k,x_j)\right ] E_k(x_k)^{-1}\mathrm dx_k \leq C. 
\]
\end{description}

We now introduce the notation $\hat \alpha_j = \hat \alpha_j^A + \hat \alpha_j^B$ and $\hat \alpha^{j}=\hat \alpha^{j,A}+\hat \alpha^{j,B}$. Where $(\hat \alpha_j^A,\hat \alpha^{j,A})$ is the variable part and $(\hat \alpha_j^B,\hat \alpha^{j,B})$ is the stable part of the initialization $(\hat \alpha_j,\hat \alpha^{j})$. The terms are given by
\begin{align*} 
\hat \alpha_j^A(x_j) = &\left\{  (\hat V_{j}^j(x_j))^2 - \hat V_{j,j}^j(x_j)\hat V_{}^j (x_j)\right\}^{-1}   \frac 1 n \sum_{i=1}^n \int g_{i,j}(x_j)  \mathrm d M_i(s) , \\
\hat \alpha^{j,A} (x_j) = &\left\{  (\hat V_{j}^j(x_j))^2 - \hat V_{j,j}^j(x_j)\hat V_{}^j (x_j)\right\}^{-1}  \frac 1 n \sum_{i=1}^n \int g_{i}^j(x_j)  \mathrm d M_i(s), \\
\hat \alpha_j^B(x_j) = &\left\{  (\hat V_{j}^j(x_j))^2 - \hat V_{j,j}^j(x_j)\hat V_{0,0}^j (x_j)\right\}^{-1}   \frac 1 n \sum_{i=1}^n \int  g_{i,j}(x_j) \mathrm d \Lambda_i(s) , \\
\hat \alpha^{j,B} (x_j) = &\left\{  (\hat V_{j}^j(x_j))^2 - \hat V_{j,j}^j(x_j)\hat V_{0,0}^j (x_j)\right\}^{-1} \frac 1 n \sum_{i=1}^n \int g_{i}^j(x_j)  \mathrm d \Lambda_i(s) ,
\end{align*}
with 
\begin{align*} 
g_{i,j}(x_j) &=  \left[ \hat V_{j}^j(x_j) \left(\frac{x_j- X_{ij}(s)}{h}\right)  - \hat V_{j,j}^j(x_j) \right]k_h(x_j-X_{ij}(s)), \\
 g_{i}^j(x_j) &=   \left[ \hat V_{j}^j(x_j) - \hat V_{}^j(x_j) \left(\frac{x_j- X_{ij}(s)}{h}\right) \right]k_h(x_j-X_{ij}(s)). 
\end{align*}
Equivalently, we can write 
\begin{align*} 
\left( \begin{matrix} \hat \alpha_j^A(x_j)  \\ \hat \alpha^{j,A} (x_j)   \end{matrix} \right) &=   \frac 1 n \sum_{i=1}^n \int \left( \begin{matrix} 1 \\ h^{-1} (x_j - X_{ij}(s))\end{matrix}\right)  k_h(x_j,X_{ij}(s))  \mathrm d M_i(s), \\
\left( \begin{matrix} \hat \alpha_j^B(x_j)  \\ \hat \alpha^{j,B} (x_j) \end{matrix} \right) &=   \frac 1 n \sum_{i=1}^n \int \hat M_j(x_j)^{-1} \left( \begin{matrix} 1 \\ h^{-1} (x_j - X_{ij}(s))\end{matrix}\right)  k_h(x_j,X_{ij}(s))  \mathrm d \Lambda_i(s),
\end{align*}
As in Assumption B4, $M_i$ is the martingale arising from $N_i$ and $\Lambda_i$ is its compensator. 
Later on, we will verify the following assumptions on $(\hat \alpha_j^A,\hat \alpha^{j,A})$ and $(\hat \alpha_j^B,\hat \alpha^{j,B})$. Moreover, for the whole estimator we define, for $s\in \{A,B\}$, $\tilde \alpha_{0,j}^s$, $\tilde \alpha_j^s$ and $\tilde \alpha^{j,s}$ as the solution of the equations 
\begin{align} \label{eq:backfitalgoAA1}
\hat M_j(x_j) 
 \left( \begin{matrix}  \tilde \alpha^s_j(x_j) -\hat \alpha^s_j(x_j)  \\ \tilde \alpha^{j,s}(x_j) - \hat \alpha^{j,s}(x_j)  \end{matrix} \right)
&= \-\tilde \alpha^s_{0,j}   \left(\begin{matrix} \hat V_{}^j (x_j) \\  \hat V_{j}^j (x_j) \end{matrix}\right)
 - \sum_{l\neq j} \int \hat S_{l,j}(x_l, x_j)  \left(\begin{matrix} \tilde \alpha_l^s(x_l) \\ \tilde \alpha^{l,s}(x_l) \end{matrix}\right) \mathrm dx_l, \\
\int \tilde \alpha_j^s(x_j) \hat V_{}^j (x_j) \mathrm dx_j &= 0. \label{eq:backfitalgoAA2}
\end{align}
Existence and uniqueness of $\tilde \alpha_j^A, \tilde \alpha_j^B, \tilde \alpha^{j,A},\tilde \alpha^{j,B}$ is stated in Proposition \ref{thm:conv:of:backfitting:ll}.  We make the further assumptions
\begin{description}
\item[B5'] There exists a constant $C$ such that with probability tending to 1 for all $j$, it holds
\[
\int \hat \alpha_j^s(x_j)^2 E_j(x_j) \mathrm dx_j \leq C, 
\]
and
\[
\int \hat \alpha^{j,s}(x_j)^2 E_j(x_j) \mathrm dx_j \leq C,
\]
for $s=A,B$. 
\item[B6'] We assume that there is a sequence $\Delta_n$ such that 
\begin{align*}
\sup_{x_k\in S_k} \left\lVert  \int \hat M_k(x_k)^{-1}\hat S_{k,j}(x_k,x_j) \left( \begin{matrix} \hat \alpha_j^A(x_j) \\ \hat \alpha^{j,A}(x_j) \end{matrix} \right)\mathrm dx_j   \right\rVert_2 = o_P(\Delta_n), \\
\left\lVert  \int \hat M_k(x_k)^{-1}\hat S_{k,j}(x_k,x_j) \left( \begin{matrix} \hat \alpha_j^A(x_j) \\ \hat \alpha^{j,A}(x_j) \end{matrix} \right)\mathrm dx_j   \right\rVert_{M_k,2} = o_P(\Delta_n),
\end{align*}
where $\lVert \cdot \rVert_2$ denotes the $L_2$ norm in $\mathbb R^2$ and where for functions $g:\mathbb R\to \mathbb R^2$ we define $\lVert g\rVert^2_{M_k,2}= \int g(u)M_k(u)g(u) \mathrm du$. The sets $S_k$ have been introduced in Assumption B4'. % B4' 
\item[B7'] There exist deterministic functions $\mu_{n,j}$ such that 
\[
\sup_{x_j \in S_j} \left\lvert \tilde \alpha_j^B(x_j) - \mu_{n,j}(x_j) \right \rvert = o_p (\Delta_n),
\]
where  $S_k$ has been introduced in Assumption B4'. % B4' 
\end{description}

The local linear equivalents to Propositions \ref{thm:conv:of:backfitting:lc} and \ref{thm:stochastic:part:lc} are the following results from \cite{Mammen:etal:99}, adapted to our setting. The following two propositions assure convergence of the backfitting algorithm and asymptotic normality of the stochastic part of the estimator under Assumptions B1'--B7'. 
\begin{proposition}[Convergence of backfitting]\label{thm:conv:of:backfitting:ll}   %\eqref{eq:general:case:1}--\eqref{eq:hatS}
Under Assumptions B1'--B3'%B4'
, with probability tending to 1, there exists a unique solution $\{\tilde m_{0,l}, \tilde m_l, \tilde m^l : l=0,\dots,d\}$ to \eqref{eq:general:case:1}--\eqref{eq:hatS}. Moreover, there exist constants $0 < \gamma < 1$ and $c> 0$ such that, with probability tending to 1, it holds:
\begin{align*}
\int \left[\tilde \alpha_j^{[r]}(x_j) -\tilde \alpha_j(x_j)\right]^2 E_j(x_j)\mathrm dx_j &\leq c\gamma^{2r} \Gamma, \\
\int \left[\tilde \alpha^{j,[r]}(x_j) -\tilde \alpha^j(x_j)\right]^2 E_j(x_j)\mathrm dx_j &\leq c\gamma^{2r} \Gamma,
\end{align*}
where 
\[
\Gamma = 1 + \sum_{l=0}^d \int \left [ \tilde \alpha_l^{[0]}(x_l) \right]^2 E_l(x_l) \mathrm dx_l + \int \left [ \tilde \alpha^{l,[0]}(x_l) \right]^2 E_l(x_l) \mathrm dx_l.
\]
The functions $\tilde \alpha_{0,l}^{[0]}$, $\tilde \alpha_{l}^{[0]}$ and $\tilde \alpha^{l,[0]}$ are the starting values of the backfitting algorithm. For $r>0$ the functions $\tilde \alpha_{l}^{[r]}$ and $\tilde \alpha^{l,[r]}$ are defined by equations \eqref{eq:backfitalgo1} and \eqref{eq:backfitalgo2}. 

Moreover, under the additional Assumption B5', with probability tending to 1, there exists a solution $\{ \tilde \alpha _0^s, \tilde \alpha_j^s, \tilde \alpha^{j,s}:j=0,\dots,d \}$ of \eqref{eq:backfitalgoAA1}, \eqref{eq:backfitalgoAA2} that is unique for $s=A,B$, respectively. 
\end{proposition}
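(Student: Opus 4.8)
The plan is to recast the backfitting system as the normal equations for an orthogonal projection in a Hilbert space of $\mathbb R^2$-valued functions and then to invoke the abstract convergence theory for cyclic (block Gauss--Seidel) projections, exactly along the lines of the regression argument in \cite{Mammen:etal:99} and of Proposition \ref{thm:conv:of:backfitting:lc}. The only genuinely new feature compared with those arguments is that the scalar weights $\hat E_j,\hat E_{j,k}$ are replaced by the matrix weights $\hat M_j,\hat S_{l,j}$; since $\hat S_{l,j}$ is a fixed matrix (built from $W$) times the scalar $\hat E_{j,k}$, and $\hat M_j$ a fixed matrix times $\hat E_j$, most scalar estimates carry over after multiplying by constants depending only on $W$, and the bulk of the work is to check exactly this.

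First, I would introduce $\mathcal H_\bullet=\bigoplus_{j=0}^d L^2(\hat M_j)$, the space of tuples $g=(g_0,\dots,g_d)$ with $g_j:\mathbb R\to\mathbb R^2$ and inner product $\langle g,\tilde g\rangle_{\hat M}=\sum_j\int g_j(u)^T\hat M_j(u)\tilde g_j(u)\,\mathrm du$. By A1, the positive definiteness of $W$ and B2', $\hat M_j$ is uniformly positive definite on $S_j$ with probability tending to one, so this is a genuine norm, equivalent to the one built from the deterministic limit $M_j$ and, on the relevant subspace, to the $E_j$-weighted $L^2$-norm in the statement. In $\mathcal H_\bullet$ the additive subspace consists of the tuples whose $j$th value--derivative pair comes from a common additive function; equations \eqref{eq:direct1}--\eqref{eq:norming} (equivalently, \eqref{eq:general:case:1} under the norming \eqref{eq:norming}) are exactly the normal equations for the $\langle\cdot,\cdot\rangle_{\hat M}$-projection of the pilot $(\hat\alpha_j,\hat\alpha^j)_j$ onto that subspace, and the iteration \eqref{eq:backfitalgo1}--\eqref{eq:backfitalgo2} is the cyclic composition $\hat\Pi_d\cdots\hat\Pi_0$ of the one-dimensional projections $\hat\Pi_j$ onto the coordinate subspaces, followed by recentering.

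Second, for existence, uniqueness and the geometric rate: by the abstract theory of cyclic projections used in \cite{Mammen:etal:99}, $\|\hat\Pi_d\cdots\hat\Pi_0-\hat\Pi\|\le\gamma^{\,r}$ for the full projection $\hat\Pi$ and some $\gamma<1$ as soon as the sum of the coordinate subspaces is closed, and this holds uniformly in $n$ with probability tending to one provided the off-diagonal operator $g\mapsto\bigl(\sum_{l\ne j}\int\hat M_j(x_j)^{-1}\hat S_{l,j}(x_l,x_j)g_l(x_l)\,\mathrm dx_l\bigr)_j$ is Hilbert--Schmidt with norm controlled; since $\hat S_{l,j}\hat M_j^{-1}$ is a constant matrix times $\hat E_{j,k}/\hat E_j$, the relevant trace integrals reduce to the scalar quantities bounded in B1' and B4', and B2' lets one pass from the random operator to its deterministic limit. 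That the limiting operator $I-(\text{off-diagonal})$ has trivial kernel — no exact asymptotic concurvity — follows because $E$ is bounded away from zero (A1) and $W$ is positive definite, so a tuple annihilated in the $M$-norm forces all component functions to be constant, whereupon the norming \eqref{eq:norming} kills them. Together with invertibility of $I-\hat T$ on $\mathcal H_\bullet$ this gives the displayed contraction bounds, with $\Gamma$ absorbing the starting values via B3' for the pilot.

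Third, for the variable/stable decomposition: the systems \eqref{eq:backfitalgoAA1}--\eqref{eq:backfitalgoAA2} for $s\in\{A,B\}$ have the \emph{same} left-hand operator $I-\hat T$; only the right-hand side changes, from the pilot to $(\hat\alpha_j^A,\hat\alpha^{j,A})_j$ respectively $(\hat\alpha_j^B,\hat\alpha^{j,B})_j$, which lie in $\mathcal H_\bullet$ with bounded norm by B5'. Hence the invertibility already established gives existence and uniqueness of $\tilde\alpha_j^s,\tilde\alpha^{j,s}$ and of the recentering constants $\tilde\alpha_{0,j}^s$, and linearity of the equations together with $\hat\alpha_j=\hat\alpha_j^A+\hat\alpha_j^B$, $\hat\alpha^j=\hat\alpha^{j,A}+\hat\alpha^{j,B}$ forces $\tilde\alpha_j=\tilde\alpha_j^A+\tilde\alpha_j^B$ and likewise for the derivative components, as needed downstream. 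I expect the main obstacle to be the uniform-in-$n$ closedness of the sum of coordinate subspaces in the matrix-weighted space: ruling out asymptotic concurvity for the $2\times2$ block system and, above all, showing the contraction constant $\gamma$ can be chosen independent of $n$. Once B1'--B5' are in force this is a perturbation of the deterministic statement, but verifying that the perturbation is uniformly small in operator norm — not merely pointwise — using B2', and controlling $\hat M_j^{-1}$ uniformly rather than quoting it, is the delicate step where the matrix structure genuinely has to be handled.
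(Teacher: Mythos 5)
Your proposal reconstructs, essentially step for step, the Hilbert-space cyclic-projection argument of \cite{Mammen:etal:99} that the paper itself invokes without proof for this proposition (it is stated only as a result ``adapted to our setting''): the matrix-weighted inner product, the normal-equations reading of \eqref{eq:general:case:1} under \eqref{eq:norming}, the closedness/no-concurvity step via B1', B4' and positive definiteness of $W$, the B2'-based perturbation from $\hat M_j^{-1}\hat S_{l,j}$ to $M_j^{-1}S_{l,j}$, and the observation that the $A$/$B$ systems share the same operator so B5' delivers the last claim. The one imprecision is that the factorization ``constant matrix times $\hat E_{j,k}$'' holds for the limiting $M_j$, $S_{l,j}$ rather than for the empirical $\hat M_j$, $\hat S_{l,j}$ (whose entries are distinct kernel moments), but since you route the contraction through B2' and explicitly flag the uniform control of $\hat M_j^{-1}$ as the remaining delicate point, this does not change the argument.
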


\begin{proposition}[Asymptotic behavior of stochastic part]\label{thm:stochastic:part:ll}
Suppose that Assumptions B1'--B6' hold for a sequence $\Delta_n$ and intervals $S_j$, $j=0,\dots,n$. Then it holds that 
\[
\sup_{x_j\in S_j} \left\lvert \tilde \alpha_j^A(x_j) - [\hat \alpha_j^A(x_j) -\tilde \alpha_{0,j}^A] \right\rvert = o_P(\Delta_n).
\]
Under the additional Assumption B7' it holds
\[
\sup_{x_j\in S_j} \left\lvert \tilde \alpha_j(x_j) - [\hat \alpha_j^A(x_j) -\tilde \alpha_{0,j}^A + \mu_{n,j}(x_j)] \right\rvert = o_P(\Delta_n).
\]
\end{proposition}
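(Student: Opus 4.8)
The plan is to follow the stochastic-part argument of \cite{Mammen:etal:99}, reading the backfitting system \eqref{eq:backfitalgoAA1}--\eqref{eq:backfitalgoAA2} as a linear fixed-point equation on the Hilbert space of additive, $\mathbb{R}^2$-valued component tuples. Writing $\tilde a=(\tilde a_0,\dots,\tilde a_d)$ with $\tilde a_j=(\tilde\alpha_j^A,\tilde\alpha^{j,A})^{T}$, $\hat a_j=(\hat\alpha_j^A,\hat\alpha^{j,A})^{T}$, $e_1=(1,0)^{T}$, and letting $\mathcal T$ be the smoothing operator $(\mathcal T g)_j(x_j)=\sum_{l\neq j}\int \hat M_j(x_j)^{-1}\hat S_{l,j}(x_l,x_j)g_l(x_l)\,\mathrm dx_l$, I would first note that $\hat M_j(x_j)^{-1}(\hat V_{0,0}^j(x_j),\hat V_{j,0}^j(x_j))^{T}=e_1$, so \eqref{eq:backfitalgoAA1} reads $(\mathrm{Id}+\mathcal T)\tilde a=b$ with $b_j:=\hat a_j-\tilde\alpha_{0,j}^A e_1$, the scalars $\tilde\alpha_{0,j}^A$ being pinned down by the norming \eqref{eq:backfitalgoAA2}. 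The quantity in the assertion is exactly the first coordinate of $\tilde a_j-b_j$, and subtracting $\mathcal T\tilde a$ from $(\mathrm{Id}+\mathcal T)\tilde a=b$ yields the key identity $\tilde a-b=-\mathcal T\tilde a=-\mathcal Tb-\mathcal T(\tilde a-b)$, i.e.\ $(\mathrm{Id}+\mathcal T)(\tilde a-b)=-\mathcal Tb$.

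Next I would establish an $L_2$-type bound on $\tilde a-b$. On the space equipped with $\lVert g\rVert_M^2=\sum_j\int g_j(u)^{T}M_j(u)g_j(u)\,\mathrm du$, Assumptions B1'--B4' (with B2' bringing $\hat M_j^{-1}\hat S_{l,j}$ uniformly close to its population analogue $M_j^{-1}S_{l,j}$) give, exactly as inside the proof of Proposition \ref{thm:conv:of:backfitting:ll}, a deterministic $\gamma<1$ with $\lVert\mathcal T\rVert_M\le\gamma$ on an event of probability tending to one; hence $\mathrm{Id}+\mathcal T$ is there invertible with $\lVert(\mathrm{Id}+\mathcal T)^{-1}\rVert_M\le(1-\gamma)^{-1}$ and $\lVert\tilde a-b\rVert_M\le(1-\gamma)^{-1}\lVert\mathcal Tb\rVert_M$. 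Splitting $\mathcal Tb=\mathcal T\hat a-\bigl(\sum_{l\neq j}\tilde\alpha_{0,l}^A e_1\bigr)_j$, the first summand has $\lVert\cdot\rVert_M=o_P(\Delta_n)$ by the second display of B6', while $\tilde\alpha_{0,l}^A=O_P(n^{-1/2})=o_P(\Delta_n)$: this is the local-linear analogue of Lemma \ref{lem:alpha:star:parametric:rate:LC}, obtained by the same ``integrate over the covariate, kill the $h^{-1}$ rate'' device used to verify B6', since after integration $\tilde\alpha_{0,l}^A$ is an average of mean-zero martingale integrals of parametric order. Therefore $\lVert\tilde a-b\rVert_M=o_P(\Delta_n)$.

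The uniform bound on $S_j$ then follows by feeding this back into $\tilde a-b=-\mathcal Tb-\mathcal T(\tilde a-b)$. The supremum over $S_j$ of the first summand is $o_P(\Delta_n)$ by the first display of B6' together with $\tilde\alpha_{0,l}^A=o_P(\Delta_n)$; for the second summand, Cauchy--Schwarz in the integration variable and the symmetry $\hat S_{l,j}^{T}=\hat S_{j,l}$ give
\[
\sup_{x_j\in S_j}\bigl\lVert[\mathcal T(\tilde a-b)]_j(x_j)\bigr\rVert_2\;\le\;C\Bigl(\sup_{x_j\in S_j}\sum_{l\neq j}\int\trace\bigl[\hat S_{l,j}(x_l,x_j)\,\hat M_j(x_j)^{-2}\,\hat S_{l,j}(x_l,x_j)^{T}\bigr]E_l(x_l)^{-1}\,\mathrm dx_l\Bigr)^{1/2}\lVert\tilde a-b\rVert_M,
\]
which is $O_P(1)\cdot o_P(\Delta_n)$ by Assumption B4'. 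Taking first coordinates proves the first claim. For the second, I would use that \eqref{eq:general:case:1}--\eqref{eq:norming} are linear and $N_i=M_i+\Lambda_i$, so by uniqueness (Proposition \ref{thm:conv:of:backfitting:ll}) the full backfitting estimator decomposes as $\tilde\alpha_j=\tilde\alpha_j^A+\tilde\alpha_j^B$; combining the first claim for the $A$-part with Assumption B7' for the $B$-part and adding gives $\sup_{x_j\in S_j}\lvert\tilde\alpha_j(x_j)-[\hat\alpha_j^A(x_j)-\tilde\alpha_{0,j}^A+\mu_{n,j}(x_j)]\rvert=o_P(\Delta_n)$.

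The step I expect to be the main obstacle is the one that bridges the two norms: producing the contraction bound $\lVert\mathcal T\rVert_M\le\gamma<1$ for the matrix-valued operator — which rests on positive-definiteness of the joint exposure structure (the additive subspace being closed), now in the $2(d+1)$-dimensional local-linear geometry rather than the scalar one of Section \ref{sec:theory:lc} — and then matching index patterns and transposes of $\hat S_{l,j}$ carefully enough that B4' genuinely delivers the operator norm from $\lVert\cdot\rVert_M$ into $\sup_{S_j}$. The parametric-rate bound on $\tilde\alpha_{0,j}^A$, by contrast, is routine once identified as the local-linear counterpart of Lemma \ref{lem:alpha:star:parametric:rate:LC}.
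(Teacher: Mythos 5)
The paper never actually proves Proposition \ref{thm:stochastic:part:ll}; it is imported from \cite{Mammen:etal:99} ``adapted to our setting,'' so your proposal is in effect a reconstruction of that reference's argument rather than an alternative to anything written here. Your architecture matches it: rewrite \eqref{eq:backfitalgoAA1}--\eqref{eq:backfitalgoAA2} as $(\mathrm{Id}+\mathcal T)\tilde a=b$ with $b_j=\hat a_j-\tilde\alpha^A_{0,j}e_1$ (your observations that $\hat M_j(x_j)^{-1}(\hat V^j_{0,0}(x_j),\hat V^j_{j,0}(x_j))^{T}=e_1$ and that $\int\hat S_{l,j}(x_l,x_j)e_1\,\mathrm dx_l=\hat M_j(x_j)e_1$ by the normalisation \eqref{eq:boundedkernel} are both correct and are exactly what reduce $\mathcal Tb$ to $\mathcal T\hat a$ plus the constants), control $\tilde a-b=-(\mathrm{Id}+\mathcal T)^{-1}\mathcal Tb$ in the $M$-weighted $L_2$ norm via the second display of B6', upgrade to the supremum over $S_j$ via B4' and Cauchy--Schwarz, and get the second assertion from linearity in $\mathrm dN_i=\mathrm dM_i+\mathrm d\Lambda_i$ together with B7'. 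All of that is sound.

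Two steps need repair. First, the contraction claim $\lVert\mathcal T\rVert_M\le\gamma<1$ is generally false for $d\ge 2$: for $d+1$ exchangeably correlated components the operator $g\mapsto\bigl(\sum_{l\neq j}\Pi_j g_l\bigr)_j$ behaves like $\rho(J-I)$ on $\mathbb R^{d+1}$, whose norm $d\rho$ exceeds one as soon as $\rho>1/d$, even though $I+\rho(J-I)$ (eigenvalues $1+d\rho$ and $1-\rho$) remains boundedly invertible for all $\rho<1$. What B1'--B2' actually yield, through the product-of-projections lemmas underlying Proposition \ref{thm:conv:of:backfitting:ll}, is bounded invertibility of $\mathrm{Id}+\mathcal T$ on the subspace cut out by \eqref{eq:backfitalgoAA2}; that weaker statement is all your subsequent estimates use, so the fix is local, but the Neumann-series phrasing as written is wrong. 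Second, $\tilde\alpha^A_{0,j}$ is not an explicit average but is defined implicitly through the whole linear system; the order $O_P(n^{-1/2})=o_P(\Delta_n)$ is correct and your mechanism (integration over $x_j$ removes the $h^{-1}$ localisation) is the right one, but it must be extracted by integrating the first row of \eqref{eq:backfitalgoAA1} over $x_j$ and invoking \eqref{eq:backfitalgoAA2} and B4', not merely asserted as an analogue of Lemma \ref{lem:alpha:star:parametric:rate:LC}, which concerns the explicit ratio estimator $\bar\alpha^*$. Neither issue affects the conclusion once patched.
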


Before stating a result for the bias part, we assume the following. 

\begin{description}
\item[B8'] For all $j \neq k$, it holds 
\[
\sup_{x_j\in S_j} \int \left \lvert\left[ \hat M_j(x_j)^{-1} \hat S_{k,j}(s_k,x_j)- M_j^{-1}(x_j) S_{k,j}(x_k,x_j)\right]_{r,s}\right\rvert E_k(x_k)\mathrm dx_k = o_p(1),
\]
for $r,s=1,2$. 
\item[B9'] There exist deterministic functions $a_{n,0}(x_0), \dots , a_{n,d}(x_d),a_n^0(x_0),\dots,a_n^d(x_d)$ and constants $a_{n}^*$, $\gamma_{n,0},\dots,\gamma_{n,d}$ such that 
\begin{align*}
\int a_{n,j}(x_j)^2 E_j(x_j)\mathrm dx_j &< \infty ,\\
\int a_{n}^j(x_j)^2 E_j(x_j)\mathrm dx_j &< \infty , \\
\gamma_{n,j}  - \int a_{n,j}(x_j) \hat V^j_{}(x_j) \mathrm dx_j &=  o_P(\Delta_n) , \\
\sup_{x_j\in S_j} \left\lvert \tilde \alpha_j^B(x_j) - \hat \mu_{n,0} -\hat \mu_{n,j}(x_j) \right\rvert &= o_P(\Delta_n), \\
\int \left\lvert \hat \alpha_j^B(x_j) - \hat \mu_{n,0} -\hat \mu_{n,j}(x_j) \right\rvert ^2 E_j(x_j) \mathrm dx_j &= o_P(\Delta_n^2)\\
\sup_{x_j\in S_j} \left\lvert \hat \alpha^{j,B}(x_j) - \hat \mu_{n,0} -\hat \mu_{n}^j(x_j) \right\rvert &= o_P(\Delta_n), \\
\int \left\lvert \hat \alpha^{j,B}(x_j)  -\hat \mu_{n}^j(x_j) \right\rvert ^2 E_j(x_j) \mathrm dx_j &= o_P(\Delta_n^2),  
\end{align*}
for random variables $\hat \mu_{n,0}$ and where
\[
\left( \begin{matrix} \hat \mu_{n,j}(x_j) \\ \hat \mu_n^j(x_j) \end{matrix}\right) = \left(\begin{matrix} a_{n,0} + a_{n,j}(x_j) \\ a_n^j(x_j) \end{matrix}\right) + \sum_{k\neq j} \int \hat M_j(x_j) ^{-1} \hat S_{k,j}(x_k,x_j) \left(\begin{matrix} a_{n,k}(x_k) \\ a_n^k(x_k) \end{matrix}\right) \mathrm dx_k. 
\]
\end{description}

The next proposition appears in \cite{Mammen:etal:99} with different notation for the nonparametric regression case. It assures convergence of the deterministic part of the estimator. 

\begin{proposition}[Asymptotic behavior of bias part]\label{thm:bias:part:ll}
Under Assumptions B1'--B6', B8', B9', it holds 
\begin{align*}
\sup_{x_j\in S_j} \left\lvert \tilde \alpha_j^B(x_j) - \mu_{n,j}(X_j)  \right\rvert = o_P(\Delta_n), \\
\sup_{x_j\in S_j} \left\lvert \tilde \alpha^{j,B}(x_j) - \mu_{n}^j(X_j)  \right\rvert = o_P(\Delta_n),
\end{align*}
for $\mu_{n,j}(x_j) = a_{n,j}(x_j) - \gamma_{n,j}$ and $\mu_n^j(x_j) = a_n^j(x_j)$. Assumption B7' holds with this choice of $\mu_{n,j}(x_j)$. 
\end{proposition}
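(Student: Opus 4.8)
The statement to be proved is the local linear counterpart of Proposition~\ref{thm:bias:part:lc}, and my plan is to deduce it from the abstract smooth backfitting theory of \cite{Mammen:etal:99} after installing the dictionary in which the scalar marginal exposure $\hat E_j$ becomes the $2\times 2$ matrix $\hat M_j$ of \eqref{eq:hatM}, the ratio $\hat E_{j,k}/\hat E_j$ becomes the matrix-valued kernel $\hat M_j^{-1}\hat S_{k,j}$ of \eqref{eq:hatS}, and the scalar pilot $\hat\alpha_j^B$ becomes the $\mathbb R^2$-valued pair $(\hat\alpha_j^B,\hat\alpha^{j,B})$. I would first record that $\hat M_j(x_j)$ is invertible uniformly in $x_j$ on the support of $E_j$ with probability tending to $1$: by B2' it is integrated-$L_2$ close to $M_j(x_j)$, which is positive definite since $W$ is and $E_j$ is bounded away from $0$ by A1. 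Hence the affine system \eqref{eq:backfitalgoAA1}--\eqref{eq:backfitalgoAA2} with $s=B$ is well posed, and Proposition~\ref{thm:conv:of:backfitting:ll} (from B1'--B3', B5') already supplies existence, uniqueness, and a geometric contraction with rate $\gamma<1$ for the associated fixed-point iteration in the $E_j$-weighted $L_2$ norm; this contraction drives the whole argument.

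The heart of the proof is to verify that the candidate $(\hat\mu_{n,j},\hat\mu_n^j)$ built in B9' is an $o_P(\Delta_n)$-approximate fixed point of that system. Rearranging \eqref{eq:backfitalgoAA1}, the stable backfit equals the pilot stable part minus the backfitting feedback, $(\tilde\alpha_j^B,\tilde\alpha^{j,B})^T = (\hat\alpha_j^B,\hat\alpha^{j,B})^T - \hat M_j^{-1}\bigl[\tilde\alpha_{0,j}^B(\hat V_{0,0}^j,\hat V_{j,0}^j)^T + \sum_{l\neq j}\int\hat S_{l,j}(\tilde\alpha_l^B,\tilde\alpha^{l,B})^T\,\mathrm dx_l\bigr]$. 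Substituting the ansatz $(\tilde\alpha_l^B,\tilde\alpha^{l,B})\approx(a_{n,l}-\gamma_{n,l},a_n^l)=(a_{n,l},a_n^l)-(\gamma_{n,l},0)$ together with the pilot matching of B9', $(\hat\alpha_j^B,\hat\alpha^{j,B})\approx(\hat\mu_{n,0}+\hat\mu_{n,j},\hat\mu_n^j)$, the cross-feedback $\sum_{l\neq j}\int\hat M_j^{-1}\hat S_{l,j}(a_{n,l},a_n^l)^T\,\mathrm dx_l$ contained in $\hat\mu_{n,j}$ cancels the one produced by the ansatz; the only leftover is $\sum_{l\neq j}\gamma_{n,l}\int\hat M_j^{-1}\hat S_{l,j}(1,0)^T\,\mathrm dx_l$, which collapses to a pure constant in the first coordinate and to zero in the derivative coordinate because, by the normalisation \eqref{eq:boundedkernel} of the boundary-corrected kernel, $\int\hat S_{l,j}(x_l,x_j)(1,0)^T\,\mathrm dx_l=(\hat V_{0,0}^j(x_j),\hat V_{j,0}^j(x_j))^T=\hat M_j(x_j)(1,0)^T$. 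What remains is $a_{n,j}(x_j)$ up to an additive constant in the first coordinate and exactly $a_n^j(x_j)$ in the second; imposing the norming \eqref{eq:backfitalgoAA2} fixes the constant, and by the third line of B9' ($\gamma_{n,j}-\int a_{n,j}\hat V_{0,0}^j=o_P(\Delta_n)$) it equals $-\gamma_{n,j}$, so $(\hat\mu_{n,j}-\hat\mu_{n,0},\hat\mu_n^j)$ solves \eqref{eq:backfitalgoAA1} up to an $L_2(E_j)$ residual $o_P(\Delta_n)$. Subtracting from the exact solution and invoking the contraction of Proposition~\ref{thm:conv:of:backfitting:ll} yields $\int[\tilde\alpha_j^B-(a_{n,j}-\gamma_{n,j})]^2E_j=o_P(\Delta_n^2)$ and the same for $\tilde\alpha^{j,B}-a_n^j$.

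The remaining and, I expect, main obstacle is to upgrade these $L_2(E_j)$ bounds to the claimed uniform bounds over $S_j$. Rewriting \eqref{eq:backfitalgoAA1} for the difference $(\tilde\alpha_j^B-\hat\mu_{n,j},\tilde\alpha^{j,B}-\hat\mu_n^j)$, its right-hand side splits into the pilot-matching residuals of B9' (uniformly $o_P(\Delta_n)$ on $S_j$ by hypothesis), the feedback error $\sum_{l\neq j}\int[\hat M_j^{-1}\hat S_{l,j}-M_j^{-1}S_{l,j}](\tilde\alpha_l^B-\hat\mu_{n,l},\dots)^T\,\mathrm dx_l$, and a clean feedback $\sum_{l\neq j}\int M_j^{-1}S_{l,j}(\dots)^T\,\mathrm dx_l$; the first of these I would control by Cauchy--Schwarz, pairing the bound $\sup_{x_j\in S_j}\int|[\hat M_j^{-1}\hat S_{l,j}-M_j^{-1}S_{l,j}]_{r,s}(x_l,x_j)|E_l(x_l)\,\mathrm dx_l=o_P(1)$ of B8' (with B4' for uniform integrability) against the $L_2(E_l)$ bound just established, and the second is uniformly small on $S_j$ because $M_j^{-1}S_{l,j}$ is a fixed bounded integral kernel applied to an $L_2$-small function. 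The delicate point is that this feedback must not amplify the sup-norm error as the iteration runs; here one reuses the contraction constant $\gamma<1$ and the uniform bounds from B3'--B6', exactly as in the proof of Theorem~4 of \cite{Mammen:etal:99}, from which the abstract result transfers essentially verbatim under the above dictionary. Once the uniform bound is established, the final claim that B7' holds with $\mu_{n,j}=a_{n,j}-\gamma_{n,j}$ is immediate.
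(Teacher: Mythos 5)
Your proposal is correct and takes essentially the same route as the paper: the paper gives no independent proof of this proposition but presents it as the abstract bias-part result of \cite{Mammen:etal:99} transcribed into the matrix-valued local linear notation, and your argument (uniform invertibility of $\hat M_j$, B9' making $(\hat\mu_{n,j},\hat\mu_n^j)$ an $o_P(\Delta_n)$-approximate fixed point of \eqref{eq:backfitalgoAA1}--\eqref{eq:backfitalgoAA2}, the contraction of Proposition \ref{thm:conv:of:backfitting:ll} for the $L_2(E_j)$ bound, and one further pass through the backfitting equation with B4'/B8' for the sup-norm upgrade) is exactly the proof of that cited result under the dictionary you describe. You supply detail that the paper delegates to the citation, but no genuinely different method.
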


%Analogously to Lemma \ref{lem:alpha:star:parametric:rate:LC}, we now establish parametric convergence of the estimator of the constant component in the local linear case in the following lemma and we will use it in the proof of Theorem \ref{thm:main:theorem:ll}. 
%\begin{lemma}\label{lem:alpha:star:parametric:rate:LL} Let $\tilde \alpha^{*} = \frac { \sum_{i=1}^n  \int \mathrm dN_i(s) } {\sum_{i=1}^n \int Y_i(s) \mathrm  ds}  - \sum_{j=0}^d \left(\int  \hat V^j_{j,0}(x_j) \mathrm dx_j \right)^{-1} \int \tilde \alpha^j(x_j) \hat V^j_{j,0}(x_j) \mathrm dx_j $  as defined in equation \eqref{eq:alphastar:LL}. Under the condition $ \int \alpha_j(x_j) E_j(x_j) \mathrm dx_j = 0 $, for $j=0,\dots,d$ together with Assumption A2, it holds 
%%\begin{center}  \textbf{Assumptions?}  \end{center}
%\[
%%\hat \alpha^* - \alpha^* = o_p(n^{-1/2}).
%n^{1/2} \left (\tilde \alpha^* - \alpha^*\right ) \to ???
%\]
%as $n\to \infty$ and for  some constant  $\sigma_\alpha^2 >0$ depending on $\alpha$. This implies in particular $\tilde  \alpha^* - \alpha^* = O_p(n^{-1/2})$. 
%\end{lemma}
%\begin{proof}
%Following the proof of Lemma \ref{lem:alpha:star:parametric:rate:LC}, it is only left to show $\int \tilde \alpha^j(x_j) \hat V^j_{j,0}(x_j) \mathrm dx_j = o_P(h^2)$. 
%\quad  \\
%\begin{center}  \textbf{missing: $\int \tilde \alpha^j(x_j) \hat V^j_{j,0}(x_j) \mathrm dx_j $}  \end{center}
%\end{proof}

\begin{proof}[Proof of Theorem \ref{thm:main:theorem:ll}]
To apply Propositions \ref{thm:conv:of:backfitting:ll}--\ref{thm:bias:part:ll}, we have to prove that Assumptions A1--A5 imply B1-B6, B8, B9. The proof is analogous to the proof of Theorem \ref{thm:main:theorem:lc} and the assumptions can be shown in a similar way. 

We now focus on the variance and bias part
\begin{align*} 
\left( \begin{matrix} \hat \alpha_j^A(x_j)  \\ \hat \alpha^{j,A} (x_j)   \end{matrix} \right) &=  \hat M_j(x_j)^{-1}    \frac 1 n \sum_{i=1}^n \int \left( \begin{matrix} 1 \\ h^{-1} (x_j - X_{ij}(s))\end{matrix}\right)  k_h(x_j,X_{ij}(s))  \mathrm d M_i(s), \\
\left( \begin{matrix} \hat \alpha_j^B(x_j)  \\ \hat \alpha^{j,B} (x_j) \end{matrix} \right) &=  \hat M_j(x_j)^{-1} \frac 1 n \sum_{i=1}^n \int  \left( \begin{matrix} 1 \\ h^{-1} (x_j - X_{ij}(s))\end{matrix}\right)  k_h(x_j,X_{ij}(s))  \mathrm d \Lambda_i(s). 
\end{align*}
Analogously to \eqref{eq:sup:Ih:Ej}--\eqref{eq:sup:01:Ejk}, we show uniform convergence of $\hat M_j(x_j)$ and $\hat S_{l,j}(x_l, x_j)$ to $M_j(x_j)$ and $S_{l,j}(x_l, x_j)$, respectively,  and then focus on 
\[\frac 1 n \sum_{i=1}^n \int \left( \begin{matrix} 1 \\ h^{-1} (x_j - X_{ij}(s))\end{matrix}\right)  k_h(x_j,X_{ij}(s)) \mathrm d M_i(s)\] 
for asymptotic normality and on
\[\frac 1 n \sum_{i=1}^n \int \left( \begin{matrix} 1 \\ h^{-1} (x_j - X_{ij}(s))\end{matrix}\right)  k_h(x_j,X_{ij}(s))  \mathrm d \Lambda_i(s)\] 
for a bias term. 

With $M_i$ being the same martingale as in the proof of Theorem \ref{thm:main:theorem:lc} occurring in the stochastic part, we get the same asymptotic variance $\sigma_j^2$. Moreover, Assumptions A6--A9 can be verified with the choices 
\begin{align*}
%W &= \left( \begin{matrix} 1 & 0 \\ 0 & \int u^2 k(u) \mathrm du \end{matrix} \right), \\ 
\Delta_n &= h^2, \\
a_n^* &= \alpha^*, \\
a_{n,j}(x_j) &= \alpha_j(x_j) + \frac{1}{2} h^2 \alpha_j''(x_j) \int u^2 k(u) \mathrm d u,  \\
a_{n}^j(x_j) &= h \alpha_j'(x_j) , \\
\beta(x) &= \sum_{j=1}^d \frac 1 2 \int u^2 k(u)\mathrm du \left [ \alpha_j''(x_j) - \int \alpha_j''(x_j) E_j(x_j) \mathrm dx_j\right], \\
\gamma_{n,j} &= \nu_{n,j} + \frac{h^2}{2}  \int u^2 k(u) \mathrm d u  \int \alpha_j''(x_j) E_j(x_j)  \mathrm d x_j,\\
\nu_{n,j} &= \int \int \alpha_j(x_j) k_h(x_j,u) E_j(u) \mathrm du \, \mathrm dx_j.
\end{align*}
%
%
%\begin{center} \textbf{B1', B2'  easy} \end{center}
%\begin{center} \textbf{show certain bits here of B3'--B6', B8', B9' ? } \end{center} 
%
%
%\begin{center} \textbf{ check $\beta$ !!!!}\end{center}
%\quad \\ \quad
%\begin{center} \textbf{check constant component $\tilde\alpha^*$: }\end{center}
%The constant term $\alpha^*$ is estimated by $\tilde \alpha^{*} = \frac { \sum_{i=1}^n  \int \mathrm dN_i(s) }{\sum_{i=1}^n \int Y_i(s) \mathrm  ds}  - \sum_{j=0}^d \left(\int  \hat V^j_{j,0}(x_j) \mathrm dx_j \right)^{-1} \int \tilde \alpha^j(x_j) \hat V^j_{j,0}(x_j) \mathrm dx_j $  as defined in equation \eqref{eq:alphastar:LL}. Here, the first summand is estimated at $n^{1/2}$-convergence rate due to Lemma \ref{lem:alpha:star:parametric:rate:LC} and the second summand is a normalization through the estimators $\tilde \alpha^j(x_j)$, $\hat V^j_{j,0}(x_j)$, $j=0,\dots,d$ themselves. 
\end{proof}

\subsection{Two-step smooth backfitting estimator} \label{sec:twostep}
The interpretation as a projection motivates two different ways to compute the smooth backfitting hazard estimator. 
For the minimisation over all additive hazard functions, we can either minimize directly or we first minimize over the subspace of all (unstructured) local polynomial functions of degree $p$ obtaining a solution $\hat \alpha_{pilot}$ from \eqref{eq:criterion} which is a non-additive estimator and then minimize the integrated squared errors between $\hat \alpha_{pilot}$ and all additive local polynomial functions of degree $p$: 
\begin{align}
\begin{split}  \label{eq:criterion:twostep}
 \argmin_{\substack{\alpha^* \in \mathbb R, \\ \alpha^{(l)}_j:\mathbb R \to \mathbb R,   \\ j = 0,\dots,d \\ l= 0,\dots,p}} \sum_{i=1}^n \int \int  &\left \{ \hat \alpha_{pilot}(x)  -  \Big[ \alpha^* + \alpha_0(t) + \alpha_1(z_1) + \dots \alpha_d(z_d)   \right.  \\ 
&\ +  \left. \alpha_0^{(p)}(x_0)\left(\frac{x_0 - X_{i0}(s)}{h}\right)^p+ \dots  + \alpha^{(p)}_d(x_d)\left(\frac{x_d - X_{id}(s)}{h}\right)^p \Big] \right \}^2  \\
& \times K_h(x-X_i(s)) Y_i(s) \mathrm ds \,  \mathrm d\nu(x). 
\end{split} 
\end{align}

We want to emphasize that the estimator we obtain via direct minimisation \eqref{eq:criterion:LC} or \eqref{eq:criterion:LL}, respectively,  and the one obtained through the two-step minimisation \eqref{eq:criterion:twostep} are identical. %Details on this are given in Appendix \ref{appendix:projection}. 
%The two-step approach for a local constant estimator is illustrated in Section \ref{sec:twostep} and the direct one for a local linear estimator is given in Section \ref{sec:direct}. Both approaches have certain advantages that are illustrated in the sequel.  

In the following, we want to illustrate how the estimator can be obtained from an unstructured hazard estimator. Although we don't make use of it, this representation enables us to derive the asymptotic theory for the final estimator making use of the known asymptotic behavior of the established unstructured local constant %and local linear hazard estimators $\hat\alpha^{LC}$ and $\hat\alpha^{LL}$, respectively, 
which is defined below. 
Moreover, the derivation is less technical and easier to follow and the implementation is more straightforward. 

%\subsubsection{Local constant two-step estimator}  
Let $\hat \alpha$ be the unstructured local constant pilot estimator, $\hat \alpha^{LC}$ defined in Section \ref{sec:loc:const}. Then, for a weighting $w$, the local constant smooth backfitting estimator $\bar\alpha$ can be equivalently defined as
\[
\min_{\bar\alpha} \int_{\mathcal{X}}  \left (\hat \alpha(x) - [\bar \alpha^* + \sum_{j=0}^d \bar \alpha_j(x_j)]\right)^2 w(x)\mathrm   dx
.\]

Analogously, for $p=1$ we get the local linear estimator $\hat \alpha^{LL}(x)= \hat O^{LL}(x) / \hat E^{LL}(x) $ for $x\in\mathcal X$ from equation  \eqref{eq:criterion}, which is defined through 
\begin{align*}
\hat O^{LL}(x) &= \frac 1 n \sum_{i=1}^n \int \{1-(x-X_i(s))D(x)^{-1}c_1(x)\} K_h(x,X_i(s))\mathrm dN_i(s), \\
\hat E^{LL}(x) &= \frac 1 n \sum_{i=1}^n \int \{1-(x-X_i(s))D(x)^{-1}c_1(x)\} K_h(x,X_i(s)) Y _i (s) \mathrm ds, 
\end{align*}
where $c_j(x) = n^{-1} \sum_{i=1}^n \int K_h(x,X_i(s))(x_j-X_{ij}(s)) Y_i(s) \mathrm ds$ and for the $(d+1)\times(d+1)$-matrix $D(x)= [d_{jk}(x)]_{jk}$ with $d_{jk}(x) = \frac 1 n \sum_{i=1}^n \int K_h(x,X_i(s))(x_j-X_{ij}(s))(x_k-X_{ik}(s))Y_i(s)\mathrm ds$. 

Note that the matrix $D$ is not necessarily regular for $d>2$ %, see e.g.\ \cite{Mammen:etal:99} 
and hence the existence of $D^{-1}$ and the existence of $\hat \alpha ^{LL}$ are not guaranteed for $d>2$. 

In contrast to the local linear estimator, the local constant estimator $\hat \alpha^{LC}$ is always well defined independent of the dimension $d$.

\section{Fitted values from the multiplicative model}

In this section we show the fitted values from the local constant multiplicative smooth backfitting model \cite{hiabu2021smooth} applied to the TRACE study data application from Section \ref{sec:tracestudy}.
The fit for the risk in the first three months is given in Figure 
\ref{fig:app:mult1} and the fit for the risk conditional on surviving the first three months is given in
\ref{fig:app:mult2}.

 \begin{figure}
 \include{early_mult.tex}
 \caption{
 Local constant multiplicative smooth backfitting  fit of $(\alpha_0,\alpha_1,\alpha_2)$ conditional on surviving the first three months
 for two different strata depending on the value of vf. 
 }
 \label{fig:app:mult1}
 \end{figure}

 \begin{figure}
 \include{late_mult.tex}
 \caption{
Local constant multiplicative smooth backfitting fit of $(\alpha_0,\alpha_1,\alpha_2)$ conditional on surviving the first three months
 for two different strata depending on the value of vf. 
 }
 \label{fig:app:mult2}
 \end{figure}
 
\section*{Acknowledgment}
This work was supported by the Deutsche Forschungsgemeinschaft (DFG) through the Research Training Group RTG 1953. 

\bibliographystyle{abbrvnat}
\bibliography{references}
\end{document}